\newtheorem{thm}{\bf Theorem}[section] 
\newtheorem{lem}[thm]{\bf Lemma}
\newtheorem{conj}{\bf Conjecture}
\theoremstyle{definition}
\newtheorem{defn}{\bf Definition}[section]
\theoremstyle{remark} 
\newtheorem{rem}{\bf Remark}[section]
\newtheorem*{note}{Note}
\title{Travelling-wave analysis of a model of tumour invasion with degenerate, cross-dependent diffusion}
\author[1]{Chloé Colson}
\author[2]{Faustino S{\'a}nchez-Gardu{\~n}o}
\author[1]{Helen M. Byrne}
\author[1]{Philip K. Maini}
\author[3]{Tommaso Lorenzi}
\affil[1]{Wolfson Centre for Mathematical Biology, Mathematical Institute, University of Oxford, Radcliffe Observatory Quarter, OX2 6GG, Oxford, UK}
\affil[2]{Departamento de Matem{\'a}ticas, Facultad de Ciencias, UNAM, Ciudad Universitaria, Circuito Exterior, Cd. de México, C.P. 04510, México}
\affil[3]{Department of Mathematical Sciences ‘G. L. Lagrange’, Politecnico di Torino, 10129 Torino, Italy}
\date{}                     
\begin{document}
\numberwithin{equation}{section}
 \numberwithin{rem}{section}

\maketitle

\begin{abstract}
In this paper, we carry out a travelling-wave analysis of a model of tumour invasion with degenerate, cross-dependent diffusion. We consider two types of invasive fronts of tumour tissue into extracellular matrix (ECM), which represents healthy tissue. These types differ according to whether the density of ECM far ahead of the wave front is maximal or not. In the former case, we use a shooting argument to prove that there exists a unique travelling wave solution for any positive propagation speed. In the latter case, we further develop this argument to prove that there exists a unique travelling wave solution for any propagation speed greater than or equal to a strictly positive minimal wave speed. Using a combination of analytical and numerical results, we conjecture that the minimal wave speed depends monotonically on the degradation rate of ECM by tumour cells and the ECM density far ahead of the front. 
\end{abstract}


\section{Introduction}

Tissue invasion is a hallmark of malignant tumours \cite{35} and a classical mathematical approach to study this process involves reaction-diffusion (R-D) partial differential equations (PDEs) \cite{11,17,28}. A key feature of many such models of tumour invasion is the inclusion of degenerate, cross-dependent diffusion. The aim of this paper is to study this common characteristic by proposing a minimal model which captures the main components of the tumour invasion process and is analytically tractable. We seek two types of constant profile, constant speed travelling wave solutions (TWS) for our model. Both types represent invasive fronts of tumour tissue into extracellular matrix (ECM), which represents healthy tissue, but they differ according to whether the density of ECM far ahead of the wave front is maximal or not. For the former, we prove the existence and uniqueness of TWS for all positive propagation speeds using the shooting argument developed by Gallay and Mascia \cite{10}. For the latter, we expand this shooting argument to prove the existence and uniqueness of TWS for propagation speeds greater than or equal to a strictly positive minimal value. Finally, we characterise this minimal wave speed using a conjecture motivated by a combination of analytical results and numerical simulations.

\paragraph{Reaction-diffusion partial differential equation models of tumour invasion.}
To invade the surrounding healthy tissue, a tumour must overcome the defenses developed by the body to maintain homeostatic control. An important barrier to tumour invasion is the ECM, a strong scaffold of proteins that holds tissue cells in place and initiates signalling pathways for cellular processes such as migration,  differentiation and proliferation \cite{36, 37}. The healthy cells encased by the ECM form another barrier to invasion by creating a competitive environment for the tumour cells. However, tumour cells have developed mechanisms to overcome both of these barriers. First, they can remodel or degrade the ECM by producing specific matrix degrading enzymes, which act in close proximity to the cells producing them \cite{38,42}. Second, by favouring glycolytic metabolism even in aerobic conditions (i.e. the "Warburg Effect"), tumour cells may acidify the tissue microenvironment, resulting in healthy cell death \cite{40,41}. Matrix remodelling is a very localised process, in contrast to the diffusion of lactic acid which occurs on a longer spatial range.

The pioneering model by Gatenby and Gawlinksi \cite{11} describes the spatio-temporal dynamics of acid-mediated tumour invasion by considering the interactions of healthy tissue, tumour tissue and the lactic acid produced by the tumour cells. Denoting the dimensionless tumour and healthy tissue densities and acid concentration by $N(x,t), M(x,t)$ and $L(x,t)$, respectively, for ${(x,t) \in \mathbb{R} \times (0,\infty)}$, their model takes the form 
\begin{equation}
 \begin{cases}
 \displaystyle \frac{\partial N}{\partial t}  =  \beta N(1-N)+ \frac{\partial}{\partial x}\left[D_N (1-M)\frac{\partial N}{\partial x}\right],\\[5pt]
  \displaystyle \frac{\partial M}{\partial t}   =  M(1-M-\alpha L),\\[5pt]
  \displaystyle \frac{\partial L}{\partial t}  =  \gamma(N-L)+\frac{\partial^2 L}{\partial x^2}.
 \end{cases}
 \label{eq:1.4}
\end{equation}
Here, it is assumed that healthy cells do not move, while tumour cells can invade in a density-dependent manner. Depending on the value of $\alpha$, the model describes the total or partial destruction of normal tissue following tumour invasion. We refer the reader to the original paper for full details of the model. A numerical study of the TWS of system \eqref{eq:1.4}, with $0 < D_N \ll 1$, showed the existence of an {\em interstitial gap}, i.e. a region devoid of cells, formed locally ahead of the invading tumour front, for large values of $\alpha$ \cite{11}. Experimental evidence has confirmed that such an interstitial gap can exist and, in this way, the model has led to novel and accurate predictions regarding tumour invasion. This is one of the reasons why this model and its variations have been widely investigated \cite{9,12,17,19,20,46}.

\paragraph{Nonlinear, degenerate diffusion: from scalar to multi-dimensional analysis.}
A key common component of the Gatenby-Gawlinksi model and its variations is the degenerate, cross-diffusion term in the equation for the tumour cell density. For scalar R-D equations with nonlinear, degenerate diffusion, TWS have been extensively studied, see for instance \cite{2, 3, 4, 5, 6, 14, 25, 26, 27}. In general, if the dimensionless equation has a reaction term, $f$, of Fisher-KPP type, i.e. $f \in C[0,1]$ with ${f(0) = f(1)= 0}$ and $f(n)>0 \, \forall n \in (0,1)$, then TWS exist and are unique if and only if the wave speed is greater than or equal to a minimal speed, $c^* > 0$, defined as the threshold speed below which no TWS exist. Further, if $c=c^*$, then the TWS is of sharp type (that is, there is a discontinuity in the spatial derivative at the front) and, for each $c > c^*$, there exists a TWS of front-type (that is, smooth). It is non-trivial to extend such an existence result to R-D systems with multiple equations due to the added complexity of studying trajectories in a phase space, rather than a phase plane. Kawasaki et al. \cite{13} do so for a R-D system with cross-dependent diffusion developed to describe spatio-temporal pattern formation in colonies of bacteria. More specifically, numerical and analytical investigations \cite{15,27} have shown the existence of TWS for wave speeds above or equal to a critical value, $c^* > 0$. Until recently, most comprehensive results on the existence of TWS for spatially-resolved models of tumour invasion focussed on models in which invasion is driven by haptotaxis or chemotaxis \cite{16,22,23,24}. In particular, the existence of TWS for the Gatenby-Gawlinski model has been largely supported by a combination of numerical and analytical results \cite{9,12,19,20,32,33}. This also holds for a simplified model of invasion by Browning et al. \cite{1,7}. However, key results were recently proved by Gallay and Mascia \cite{10} for a reduced version of the Gatenby-Gawlinski model: they showed the existence of a form of weak TWS for any positive wave speed, $c > 0$.

\paragraph{The mathematical model.} We now present a minimal model of tumour invasion. There is increasing evidence that phenotypically heterogeneous tumours can contain sub-populations of cells with different traits, e.g. matrix-degrading cells and acid-producing cells \cite{46}. Therefore, we make the simplifying assumption that the healthy tissue compartment solely comprises ECM, disregarding healthy cells, and we focus on the interactions of ECM-degrading tumour cells and ECM. Using a standard law for conservation of mass and denoting the tumour cell and ECM densities by $N(x,t)$ and $M(x,t)$, respectively, for $(x,t) \in \mathbb{R} \times (0,\infty)$, we propose the following system of PDEs:
\begin{equation}
\begin{cases}
\displaystyle \frac{\partial N}{\partial t} =\underbrace{\frac{\partial}{\partial x} \left[D_N \left(1-\frac{M}{M_{\text{Max}}} \right) \frac{\partial N}{\partial x} \right]}_{\text{tumour cell movement}}+\underbrace{\rho \left(1 - \frac{N}{K}\right) N,}_{\text{tumour growth}}\\
\displaystyle \frac{\partial M}{\partial t} = \underbrace{-k M N.}_{\text{ECM degradation}}
\end{cases}
\label{eq:1.2}
\end{equation}
We assume that the tumour grows logistically, with maximum growth rate, $\rho$, and carrying capacity, $K$. Further, the ECM acts as a physical barrier that inhibits tumour cell movement, but not proliferation. Thus, following Gatenby and Gawlinski \cite{11} and others \cite{17,18,46}, we define the diffusivity of tumour cells as a monotonically decreasing function of the ECM density to model the obstruction of movement by the ECM.  The diffusivity of tumour cells in the absence of ECM is denoted by $D_N$ and the ECM density that inhibits all tumour cell movement is denoted by $M_{\text{Max}}$.  Finally, we assume that the ECM does not to grow and is degraded at a rate that is proportional to the local tumour cell density, with a per cell degradation rate of $k$. We use a mass-action term to reflect the localised nature of matrix degradation. 

To reduce the number of free parameters in the system and facilitate the analysis that follows, we non-dimensionalise equations \eqref{eq:1.2} and, retaining the same dimensional state variables for notational convenience, we obtain the following system:
\begin{equation}
 \begin{cases}
\displaystyle \frac{\partial N}{\partial t} =\frac{\partial}{\partial x} \left[ \left(1-M\right) \frac{\partial N}{\partial x} \right]+\left(1 - N\right)N,\\
\displaystyle \frac{\partial M}{\partial t} = -\kappa M N,
\end{cases}
\label{eq:1.3}
\end{equation}
where $\kappa = \frac{K}{\rho} k$. We note that system \eqref{eq:1.3} is similar to a reduced version of the model \eqref{eq:1.4} from Moschetta and Simeoni \cite{20} and a reduced model of melanoma invasion from Browning et al. \cite{1}. In these models, the healhy tissue compartment comprises cells and ECM, and, as such, they include additional reaction terms that represent logistic growth of the healthy tissue density and healthy tissue competition with tumour tissue, respectively. To derive our model, we assumed that the ECM, which constitutes a physical barrier to tumour cell invasion, represents the healthy tissue. Crucially, this leads to the minimal model \eqref{eq:1.3} that retains the degeneracy in the cross-diffusion term, which is the key focus of this paper.

\paragraph{Structure of the paper.}
We will seek constant profile, constant speed TWS for \eqref{eq:1.3}, which are heteroclinic trajectories of a 3-D dynamical system connecting two of its steady states. These  correspond to spatially homogeneous, steady state solutions of \eqref{eq:1.3}, which are given by:
\begin{equation}
    (N_0^*,M_0^*) =(0,0), \,\, (N_1^*,M_1^*) = (1,0), \,\,(N_2^*,M_2^*) = (0,1),\,\, (N_3^*,M_3^*)= (0,\bar{M}), \, \bar{M} \in [0,1).
\end{equation}
Here, $(N_0^*,M_0^*)$ is the trivial state, $(N_1^*,M_1^*)$ is a state in which the tumour has successfully invaded and degraded all ECM, and $(N_2^*,M_2^*)$ and $(N_3^*,M_3^*)$ are a continuum of healthy, tumour-free states. We distinguish $(N_2^*,M_2^*)$ from $(N_3^*,M_3^*)$ because of the degeneracy at $M=1$ in system \eqref{eq:1.3}. Since we are interested in studying the existence of TWS that describe the invasion of a tumour into healthy tissue, we will look for two types of heteroclinic trajectories: those connecting $(N_1^*,M_1^*)$ to $(N_2^*,M_2^*)$ and those connecting $(N_1^*,M_1^*)$ to $(N_3^*,M_3^*)$. In Section 2, we define the TWS we seek, prove preliminary results and derive the ordinary differential equation (ODE) system they must satisfy. In Section 3, we use the shooting argument developed by Gallay and Mascia \cite{10} to show that system \eqref{eq:1.3} has a unique TWS connecting $(N_1^*,M_1^*)$ to $(N_2^*,M_2^*)$ for any positive wave speed. We then show that, for each $\bar{M} \in [0,1)$, system \eqref{eq:1.3} has a unique TWS connecting $(N_1^*,M_1^*)$ to  $(N_3^*,M_3^*)$ for any wave speed greater than or equal to a strictly positive minimum value. Motivated by our numerical simulations and partial analytical results, we make a conjecture about the dependence of the minimal wave speed on $\bar{M} \in [0,1)$ and $\kappa > 0$, the rescaled degradation rate of the ECM. In Section 4, we present numerical simulations of system \eqref{eq:1.3} which support and complement the preceding analytical results. We conclude the paper in Section 5, where we discuss our results alongside future research perspectives.

\section{The travelling-wave problem}
\subsection{Preliminaries}
We seek constant profile, constant speed TWS of system \eqref{eq:1.3} by introducing the travelling wave coordinate $\xi = x-ct$. We require the wave speed $c>0$ so that the tumour invades the ECM from left to right in the spatial domain. Substituting the ansatz $ N(x,t)=\mathcal{N}(\xi) $ and $M(x,t)=~\mathcal{M}(\xi)$ into system \eqref{eq:1.3}, we deduce that TWS must satisfy the following ODE system:
   \begin{subnumcases}{}
    {\displaystyle \diff{}{\xi}\left((1-\mathcal{M})\diff{ \mathcal{N}}{ \xi }\right) + c \diff{\mathcal{N}}{\xi} + (1-\mathcal{N})\mathcal{N} = 0,} \label{eq:2.1a} \\
 {\displaystyle c \diff{\mathcal{M}}{\xi} - \kappa \mathcal{M}\mathcal{N} = 0.} \label{eq:2.1b}
   \end{subnumcases}

The TWS we seek connect spatially homogeneous steady states of system \eqref{eq:1.3} and, equivalently, steady states of system \eqref{eq:2.1a}-\eqref{eq:2.1b}. Thus, we require one of the following sets of asymptotic conditions to be satisfied:
   \begin{align}
      &{\lim_{\xi \to -\infty}(\mathcal{N}(\xi),\mathcal{M}(\xi)) = (1,0), \, \lim_{\xi \to +\infty}(\mathcal{N}(\xi),\mathcal{M}(\xi)) = (0,1),} \label{eq:2.2a}\\ 
     &{\lim_{ \xi \to -\infty}(\mathcal{N}(\xi),\mathcal{M}(\xi)) = (1,0), \lim_{\xi \to +\infty}(\mathcal{N}(\xi),\mathcal{M}(\xi)) = (0,\bar{\mathcal{M}}) \, \text{with } \bar{\mathcal{M}}\in[0,1).} 
   \label{eq:2.2b}
    \end{align}
In other words, far behind the wave, the tumour density is at carrying capacity and the ECM has been fully degraded, whereas, far ahead of the wave, the tumour density is zero and the ECM density is either at carrying capacity (i.e. $\mathcal{M}=1$) or at any value $\mathcal{M}\in[0,1)$. As noted previously, the first equation in system \eqref{eq:1.3} is a degenerate parabolic equation since the cross-diffusion coefficient $D(M)=1-M$ is zero when $M=1$. The existence of global classical solutions of this PDE system and the corresponding ODE system \eqref{eq:2.1a}-\eqref{eq:2.1b} is therefore unclear in cases where $M=1$ or, correspondingly, where $\mathcal{M}=1$. We therefore define a weak TWS in a similar way to the definition of a propagation front in \cite{10}.

\begin{defn}
The triple $(\mathcal{N},\mathcal{M};c)$ is called a weak TWS for system \eqref{eq:1.3} if 
\begin{enumerate}
    \item  $(\mathcal{N},\mathcal{M}) \in C(\mathbb{R},[0,1]) \times C(\mathbb{R},[0,1])$ and $(1-\mathcal{M}) \diff{\mathcal{N}}{\xi} \in L^2(\mathbb{R})$;
    \item $(\mathcal{N},\mathcal{M})$ is a weak solution of \eqref{eq:2.1a}-\eqref{eq:2.1b}, i.e. for all $(\phi,\psi) \in C^1(\mathbb{R}) \times C^1(\mathbb{R})$ with compact support
    \begin{equation}
      \int_\mathbb{R} \left\{\left[c \mathcal{N} + (1-\mathcal{M}) \diff{\mathcal{N}}{\xi} \right]\diff{\phi}{\xi} - (1-\mathcal{N})\mathcal{N}\phi \right\} \mathrm{d}\xi =0,
      \label{eq:2.3}
    \end{equation}
    \begin{equation}
     \int_\mathbb{R} \mathcal{M} \left\{c \diff{\psi}{\xi} + \kappa \mathcal{N} \psi \right\} \mathrm{d}\xi =0;
     \label{eq:2.4}
    \end{equation}
\item one of the pairs of asymptotic conditions given by \eqref{eq:2.2a} and \eqref{eq:2.2b}, respectively, are satisfied. 
\end{enumerate}
We refer to $(\mathcal{N},\mathcal{M})$ as the travelling wave profile and $c$ as the propagation speed.
\label{def:2.1}
\end{defn}

\begin{note}
Henceforth, unless otherwise stated, we refer to weak TWS in the sense of Definition \ref{def:2.1} as TWS.
\end{note}

If $(\mathcal{N},\mathcal{M};c)$ is a TWS for system \eqref{eq:1.3}, then we can show that $\mathcal{N}(1-\mathcal{N}) \in L^1(\mathbb{R})$ and $c > 0$ using a proof identical to that of Lemma 2.1 in \cite{10} and, thus, we omit it. The following lemma, whose proof is in Supplementary Material S1, states that, if $(\mathcal{N},\mathcal{M};c)$ is a TWS for system \eqref{eq:1.3}, then $\mathcal{N}$ and $\mathcal{M}$ are non-negative and bounded and, thus, the TWS is biologically realistic.
 \begin{lem}
 If $(\mathcal{N},\mathcal{M};c)$ is a weak TWS, in the sense of Definition \ref{def:2.1}, that satisfies the asymptotic conditions \eqref{eq:2.2b} for $\bar{\mathcal{M}} \in (0,1)$, then there exists a unique point $\bar{\xi} \in \mathbb{R} \cup \{+\infty\}$ such that
 \begin{enumerate}
     \item $\mathcal{N},\mathcal{M} \in C^\infty((-\infty,\bar{\xi}))$ and $0 < \mathcal{N}(\xi) < 1$, $0 < \mathcal{M}(\xi) < \bar{\mathcal{M}}$ for $\xi < \bar{\xi}$;
     \item If $\bar{\xi} < +\infty$, then $\mathcal{N}(\xi) = 0$ and $\mathcal{M}(\xi) = \bar{\mathcal{M}}$ for all $\xi \geq \bar{\xi}.$ 
\end{enumerate}
\label{lem:2.1}
\end{lem}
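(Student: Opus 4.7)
The plan is to build the proof around the observation that the monotonicity of $\mathcal{M}$ imposed by \eqref{eq:2.1b}, combined with the asymptotic value $\bar{\mathcal{M}}<1$, keeps the diffusion coefficient $1-\mathcal{M}$ bounded away from zero wherever the profile is not yet constant, so the ODE system \eqref{eq:2.1a}--\eqref{eq:2.1b} is uniformly non-degenerate on the relevant part of $\mathbb{R}$. First, using the continuity of $\mathcal{N}$ and the weak form \eqref{eq:2.4}, I would pass to the classical form of \eqref{eq:2.1b} to deduce $\mathcal{M}\in C^1(\mathbb{R})$ together with the representation
\[
\mathcal{M}(\xi)=\mathcal{M}(\xi_0)\exp\!\Bigl(\tfrac{\kappa}{c}\int_{\xi_0}^{\xi}\mathcal{N}(s)\,\mathrm{d}s\Bigr).
\]
Since $c,\kappa>0$ and $\mathcal{N}\geq 0$, this shows $\mathcal{M}$ is non-decreasing; since $\bar{\mathcal{M}}>0$ forces $\mathcal{M}(\xi_0)>0$ for sufficiently large $\xi_0$, the formula then propagates strict positivity to all of $\mathbb{R}$. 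Monotonicity and the asymptotic value yield $\mathcal{M}(\xi)\leq\bar{\mathcal{M}}$ everywhere, which makes the definition
\[
\bar{\xi}:=\inf\{\xi\in\mathbb{R}:\mathcal{M}(\xi)=\bar{\mathcal{M}}\}\in\mathbb{R}\cup\{+\infty\}
\]
unambiguous and uniquely determined, with $\mathcal{M}(\xi)<\bar{\mathcal{M}}$ on $(-\infty,\bar{\xi})$. When $\bar{\xi}<+\infty$, continuity gives $\mathcal{M}(\bar{\xi})=\bar{\mathcal{M}}$, monotonicity with the upper bound forces $\mathcal{M}\equiv\bar{\mathcal{M}}$ on $[\bar{\xi},+\infty)$, and substituting $\mathcal{M}'\equiv 0$ and $\bar{\mathcal{M}}>0$ into \eqref{eq:2.1b} forces $\mathcal{N}\equiv 0$ there, establishing conclusion~(2).

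On any compact subset of $(-\infty,\bar{\xi})$, $1-\mathcal{M}$ is $C^1$ and bounded below by $1-\bar{\mathcal{M}}>0$, so I would bootstrap \eqref{eq:2.3}: the flux $(1-\mathcal{M})\mathcal{N}'$ admits a weak derivative $-c\mathcal{N}'-(1-\mathcal{N})\mathcal{N}\in L^2_{\mathrm{loc}}$, and dividing by the positive $C^1$ weight upgrades $\mathcal{N}$ to $W^{2,2}_{\mathrm{loc}}$ satisfying classically
\[
(1-\mathcal{M})\mathcal{N}''+(c-\mathcal{M}')\mathcal{N}'+(1-\mathcal{N})\mathcal{N}=0.
\]
Iterating yields $\mathcal{N},\mathcal{M}\in C^\infty((-\infty,\bar{\xi}))$. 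For $\mathcal{N}>0$ on this interval I argue by contradiction: if $\mathcal{N}(\xi_0)=0$ for some $\xi_0<\bar{\xi}$, then since $\mathcal{N}\geq 0$, $\xi_0$ is a minimum, so $\mathcal{N}'(\xi_0)=0$, and the ODE above forces $\mathcal{N}''(\xi_0)=0$. Writing \eqref{eq:2.1a}--\eqref{eq:2.1b} as a smooth 3-D first-order system for $(\mathcal{N},\mathcal{N}',\mathcal{M})$ on $\{\mathcal{M}<1\}$, the point $(0,0,\mathcal{M}(\xi_0))$ is a fixed point, so Picard--Lindel\"{o}f uniqueness forces the trajectory to be constant, contradicting $\mathcal{N}(-\infty)=1$.

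For $\mathcal{N}<1$ I apply the same 3-D uniqueness statement at the point $(1,0,\mathcal{M}(\xi_0))$. This is not a fixed point, but a direct check shows that the unique smooth solution through it is the explicit curve $(1,0,\mathcal{M}(\xi_0)\exp((\kappa/c)(\xi-\xi_0)))$, valid while the third component stays below $1$. Uniqueness then forces our trajectory to coincide with this curve on $[\xi_0,\bar{\xi})$, so $\mathcal{N}\equiv 1$ there while $\mathcal{M}$ grows exponentially. When $\bar{\xi}=+\infty$ this exponential growth eventually exceeds $\bar{\mathcal{M}}$, contradicting $\mathcal{M}\leq\bar{\mathcal{M}}$; when $\bar{\xi}<+\infty$, continuity yields $\mathcal{N}(\bar{\xi})=1$, contradicting $\mathcal{N}(\bar{\xi})=0$ obtained from conclusion~(2). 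The step I expect to be most delicate is the bootstrap of regularity near a finite $\bar{\xi}$: although the lower bound $1-\mathcal{M}\geq 1-\bar{\mathcal{M}}$ holds uniformly on compacts strictly inside $(-\infty,\bar{\xi})$, one must localise the argument carefully and rule out pathological limiting behaviour of $\mathcal{N}$ or $\mathcal{N}'$ as $\xi\uparrow\bar{\xi}$, so that the $C^\infty$-upgrade and the Picard--Lindel\"{o}f contradiction chains remain valid right up to $\bar{\xi}$.
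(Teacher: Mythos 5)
Your proposal is correct and follows essentially the same route as the paper's proof (Supplementary Material S1): local non-degeneracy of $1-\mathcal{M}$ on $(-\infty,\bar{\xi})$ gives smoothness by bootstrapping the weak formulation, the explicit solution of the $\mathcal{M}$-equation gives strict positivity and monotonicity and hence the definition of $\bar{\xi}$, and uniqueness for the resulting classical ODE system rules out interior points where $\mathcal{N}=0$ or $\mathcal{N}=1$. The only cosmetic differences are that you phrase the exclusion of $\mathcal{N}=0,1$ via Picard--Lindel\"{o}f for the first-order system (with the invariant curve $\mathcal{N}\equiv 1$, $\mathcal{M}$ growing exponentially) where the paper argues via non-degenerate extrema and the asymptotic conditions.
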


\begin{rem}
The case of TWS that satisfy the asymptotic conditions \eqref{eq:2.2b} for $\bar{\mathcal{M}}=0$ is not considered in Lemma \ref{lem:2.1}. By definition, such solutions satisfy $\displaystyle \lim_{\xi\to \pm \infty} \mathcal{M}(\xi) = 0$ for $\mathcal{N} \geq 0$, which is only possible if $\mathcal{M} \equiv 0$ on $\mathbb{R}$ since $\mathcal{M}$ is increasing for $\mathcal{N},\mathcal{M} > 0$. In this case, system \eqref{eq:2.1a}-\eqref{eq:2.1b} reduces to the Fisher-KPP equation, which has been extensively studied \cite{21,30,31}. It is known that the Fisher-KPP equation admits classical TWS that satisfy the asymptotic conditions $\displaystyle \lim_{\xi\to -\infty } \mathcal{N}(\xi) = 1$, $\displaystyle \lim_{y\to +\infty} \mathcal{N}(\xi) = 0$ and $\displaystyle \lim_{\xi\to \pm \infty } \diff{\mathcal{N}}{\xi}(\xi)=0$ for all $c \geq 2$. This result, therefore, holds for TWS of \eqref{eq:2.1a}-\eqref{eq:2.1b} satisfying the asymptotic conditions \eqref{eq:2.2b} for $\bar{\mathcal{M}}=0$.
\label{rem:2.1}
\end{rem}

A version of Lemma \ref{lem:2.1} for TWS that satisfy the asymptotic conditions $\eqref{eq:2.2a}$ follows similarly \cite{10}. These results highlight that the solutions we seek are classical solutions of system \eqref{eq:2.1a}-\eqref{eq:2.1b} on intervals of the form $(-\infty,\bar{\xi})$. Further, they can be smooth ($\bar{\xi} = \infty)$ or sharp ($\bar{\xi} < \infty$).  

\subsection{Desingularisation of the ODE system}
Definition \ref{def:2.1} describes two types of TWS of system \eqref{eq:2.1a}-\eqref{eq:2.1b}, which differ in the asymptotic conditions they satisfy at infinity. One type of solution converges to $(\mathcal{N},\mathcal{M})=(0,1)$ at infinity. Therefore, we need to elucidate the behaviour of solutions as they approach $\mathcal{M}=1$, which is precisely when system \eqref{eq:2.1a}-\eqref{eq:2.1b} is {\em singular}. A common approach to simplify the analysis is to remove this singularity by re-parametrising the system. 
Given a solution $(\mathcal{N},\mathcal{M})$ of system \eqref{eq:2.1a}-\eqref{eq:2.1b} satisfying either \eqref{eq:2.2a} or \eqref{eq:2.2b}, we introduce a new independent variable $y = \Phi(\xi)$ defined such that
\begin{equation}
\diff{y}{\xi} \equiv \Phi'(\xi) = \frac{1}{1-\mathcal{M}(\xi)} \,\, \forall \xi \in \mathbb{R}.
\label{eq:2.5}
\end{equation}
Further introducing the following dependent variables
\begin{equation}
n(y)=\mathcal{N}(\Phi^{-1}(y)), \qquad m(y) = \mathcal{M}(\Phi^{-1}(y)), \quad y \in \mathbb{R},
\label{eq:2.6}
\end{equation}
we can apply the chain rule and use \eqref{eq:2.5} to find that, for $0 \leq m \leq 1$, the trajectories satisfy the following ODE system, for $y \in \mathbb{R}$:
\begin{subnumcases}{}
{ \frac{\mathrm{d}^2 n}{\mathrm{d}y^2}+ c \diff{n}{y} + (1-n)n(1-m) = 0},  \label{eq:2.7a}
\\
{\diff{m}{y} - \frac{\kappa}{c} m (1-m) n = 0.} \label{eq:2.7b}
\end{subnumcases}

In line with the asymptotic conditions \eqref{eq:2.2a} and \eqref{eq:2.2b}, we require one of the following to hold:
\begin{align}
   &{\lim_{y \to -\infty}(n(y),m(y)) = (1,0), \, \lim_{y \to +\infty}(n(y),m(y)) = (0,1),} \label{eq:2.7c}\\ 
   &{\lim_{y \to -\infty}(n(y),m(y)) = (1,0), \lim_{y \to +\infty}(n(y),m(y)) = (0,\bar{m}) \, \text{with } \bar{m}\in[0,1).} 
   \label{eq:2.7d}
\end{align}

Importantly, system \eqref{eq:2.7a}-\eqref{eq:2.7b} is topologically equivalent to system \eqref{eq:2.1a}-\eqref{eq:2.1b} for $(\mathcal{N},\mathcal{M}) \in (0,1)^2$. This follows from the fact that \eqref{eq:2.6} defines a homeomorphism that maps the orbits of \eqref{eq:2.1a}-\eqref{eq:2.1b} onto the orbits of \eqref{eq:2.7a}-\eqref{eq:2.7b}, while preserving their orientation - \eqref{eq:2.5} implies that $y$ is an increasing function of $\xi$ for all $0 \leq \mathcal{M} < 1$. We also observe that, in contrast to system \eqref{eq:2.1a}-\eqref{eq:2.1b}, system \eqref{eq:2.7a}-\eqref{eq:2.7b} has an additional continuum of steady states of the form $(n,m) = (\bar{n},1)$, $\bar{n} \in (0,1]$. These are not spatially homogeneous steady states of the original PDE system \eqref{eq:1.3}, so we do not consider them as asymptotic conditions in the context of TWS. 

We finally obtain a system of three first order ODEs by introducing the additional variable $p = \frac{\mathrm{d}n}{\mathrm{d}y}$ and, using primes to denote derivatives with respect to $y$, we have:
\begin{subnumcases}{}
{ n' = p},  \label{eq:2.8a}
\\
{p' = -c p - (1-n)n(1-m),}\label{eq:2.8b}
\\
{m'= \frac{\kappa}{c} m (1-m) n.} \label{eq:2.8c}
\end{subnumcases}

In the following section, we set up a framework, first proposed in \cite{10} for a different system, to study two distinct types of solutions of \eqref{eq:2.8a}-\eqref{eq:2.8c}. First, those that remain in the region $\mathcal{D}_1$, defined as
\begin{equation}
\mathcal{D}_1 \coloneqq \{ (n,p,m) \in \mathbb{R}^3 \mid m \in (0,1), n\in (0,1), p \in (-\infty,0) \},
\label{eq:2.9}
\end{equation}
and that satisfy $\displaystyle \lim_{y\to-\infty}(n(y),p(y),m(y)) = (1,0,0)$,   $\displaystyle \lim_{y\to+ \infty}(n(y),p(y),m(y)) = (0,0,1)$. Second, for $\bar{m}\in (0,1)$, those that remain in the region $\mathcal{D}_{\bar{m}}$, defined similarly to \eqref{eq:2.9} as
\begin{equation}
\mathcal{D}_{\bar{m}} \coloneqq  \{ (n,p,m) \in \mathbb{R}^3 \mid m \in (0,\bar{m}), n\in (0,1), p \in (-\infty,0) \},
\label{eq:2.10}
\end{equation}
and that satisfy  $\displaystyle \lim_{y\to- \infty}(n(y),p(y),m(y)) = (1,0,0)$, $\displaystyle \lim_{y\to +\infty}(n(y),p(y),m(y)) = (0,0,\bar{m})$. 

\section{Travelling-wave analysis} \label{sec:3}

In this section, we study the existence of TWS. To do so, we apply the shooting argument developed by Gallay and Mascia \cite{10}. The crucial difference between Gallay and Mascia's model and system \eqref{eq:1.3} is that the latter has an additional continuum of steady states of the form $(0,\bar{M})$, $\bar{M}\in (0,1)$. We find that the results of \cite{10} for TWS connecting the equilibrium points $(1,0,0)$ and $(0,0,1)$ apply, with minor modifications, to the TWS of system \eqref{eq:2.8a}-\eqref{eq:2.8c} that satisfy the same asymptotic conditions \eqref{eq:2.2a}. Therefore, in what follows, we state the key results and present only those proofs which require a different approach (all other proofs are provided in Supplementary Material S1). For TWS of system \eqref{eq:2.8a}-\eqref{eq:2.8c} that satisfy the asymptotic conditions \eqref{eq:2.2b}, we further develop the shooting argument to obtain new results. 

\subsection{Local analysis of the equilibrium point $(1,0,0)$: defining the shooting parameter}
The TWS of interest satisfy $\displaystyle \lim_{y\to-\infty}(n(y),p(y),m(y)) = (1,0,0)$. We therefore study the behaviour of solutions of \eqref{eq:2.8a}-\eqref{eq:2.8c} in a neighbourhood of the equilibrium point $P_1 \coloneqq (1,0,0)$ by performing a linear stability analysis. The Jacobian matrix at $P_1$ reduces to
\[
J \bigr \rvert_{(1,0,0)} =
\left[\begin{array}{ccc}
		0 & 1 & 0 \\
		1 & -c & 0 \\
		0 & 0 &  \frac{\kappa}{c}
\end{array}\right],
\]
and it has the following eigenvalues and eigenvectors: 
\begin{equation}
    \lambda_1 = \frac{-c-\sqrt{c^2+4}}{2}, \quad \lambda_2 = \frac{-c+\sqrt{c^2+4}}{2}, \quad \lambda_3 = \frac{\kappa}{c},
    \label{eq:3.2a}
\end{equation}
\begin{equation}
   \vec{v}_1=\left(\frac{c-\sqrt{c^2+4}}{2},1,0\right)^\top, \quad \vec{v}_2=\left(\frac{c+\sqrt{c^2+4}}{2},1,0\right)^\top, \quad \vec{v}_3=(0,0,1)^\top.
    \label{eq:3.2b}
\end{equation}
Since $\lambda_1$ is negative and $\lambda_2$ and $\lambda_3$ are positive, $P_1$ is a three-dimensional hyperbolic saddle point with a two-dimensional unstable manifold, which locally is a plane through $P_1$ generated by the eigenvectors $\vec{v}_2$ and $\vec{v}_3$. There is also a one-dimensional stable manifold which locally is a straight line spanned by the eigenvector $\vec{v}_1$. Trajectories defined by \eqref{eq:2.8a}-\eqref{eq:2.8c} that leave $P_1$ must do so via the two-dimensional unstable manifold at $P_1$. We therefore compute asymptotic expansions of all solutions of \eqref{eq:2.8a}-\eqref{eq:2.8c} in a neighbourhood of $P_1$ that lie on the unstable manifold. Requiring that $n \in (0,1)$ and $p < 0$, so that solutions leaving $P_1$ remain in $\mathcal{D}_1$, we obtain the following result.
\begin{lem}
Fix $c > 0$. For any $\alpha \geq 0$, the system \eqref{eq:2.8a}-\eqref{eq:2.8c}  has a unique solution such that, as $y \to - \infty$,
\begin{equation}
\begin{aligned}
n(y) & = 1-e^{\lambda_2 y} + \mathcal{O}(e^{(\lambda_2 + \mu)y}), \\
p(y) & = -\lambda_2e^{\lambda_2 y} + \mathcal{O}(e^{(\lambda_2 + \mu)y}),\\
m(y) & = \alpha e^{\lambda_3 y} + \mathcal{O}(e^{(\lambda_3 + \mu)y}),
\label{eq:3.3}
\end{aligned}
\end{equation}
where $\lambda_2$ and $\lambda_3$ are given by \eqref{eq:3.2a} and $\mu = \min(\lambda_2,\lambda_3) >0$. 
\label{lem:3.1}
\end{lem}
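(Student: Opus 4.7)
}

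The plan is to construct the desired family of trajectories approaching $P_1$ as $y\to-\infty$ by a fixed-point argument on the two-dimensional unstable manifold of $P_1$, and then to pin down the translation freedom with a normalization; the asymptotic expansion is then read off from the Duhamel formula.

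First, I would shift coordinates by $u = n - 1$, rewriting the system as $\vec{x}\,' = J\vec{x} + F(\vec{x})$ with $\vec{x} = (u, p, m)^{\top}$, $J$ the Jacobian from the excerpt, and $F$ smooth and vanishing to second order at the origin; its only quadratic monomials are $u^2$ and $um$ in the $p$-component and $um$ and $m^2$ in the $m$-component. Decompose $\mathbb{R}^3 = E^s \oplus E^u$ via the spectral projections associated with $E^s = \operatorname{span}\vec{v}_1$ and $E^u = \operatorname{span}(\vec{v}_2, \vec{v}_3)$. Any trajectory with $\vec{x}(y) \to 0$ as $y \to -\infty$ necessarily lies on the $2$D unstable manifold $W^u(P_1)$.

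Second, I would run a Banach fixed-point argument for trajectories on $(-\infty, Y]$, with $Y$ sufficiently negative, in the weighted space $\bigl\{\vec{x} \in C((-\infty, Y]; \mathbb{R}^3) : \sup_{y\leq Y}|\vec{x}(y)|\,e^{-\mu y} < \infty\bigr\}$ with $\mu = \min(\lambda_2, \lambda_3) > 0$, using the Duhamel formulation that treats the unstable part by variation-of-constants from $-\infty$ and the stable part by variation-of-constants from $Y$. For every pair of amplitudes $(A, B)$ with $|A|+|B|$ small enough, this produces a unique solution with linear part $A e^{\lambda_2 y}\vec{v}_2 + B e^{\lambda_3 y}\vec{v}_3$ as $y \to -\infty$. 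Re-injecting this linear predictor into the Duhamel integrals and exploiting the sparsity of $F$, the quadratic interactions force the $(u, p)$-correction at order $\min(e^{2\lambda_2 y}, e^{(\lambda_2+\lambda_3)y}) = e^{(\lambda_2+\mu)y}$ and the $m$-correction at order $\min(e^{2\lambda_3 y}, e^{(\lambda_2+\lambda_3)y}) = e^{(\lambda_3+\mu)y}$, which matches the claimed remainders.

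Finally, translation invariance $y \mapsto y + y_0$ of the autonomous system rescales $(A, B) \mapsto (A e^{\lambda_2 y_0}, B e^{\lambda_3 y_0})$; the requirement that the trajectory enters $\mathcal{D}_1$ forces $u, p < 0$ near $y = -\infty$, hence $A < 0$. Choosing $y_0$ so that the coefficient of $e^{\lambda_2 y}$ in $n - 1$ equals $-1$ kills the translation freedom, after which $B$ is the only remaining parameter; the constraint $m \geq 0$, together with the invariance of the plane $\{m = 0\}$ (which covers the case $\alpha = 0$, for which the system reduces to the Fisher--KPP ODE), forces $B = \alpha \geq 0$. The main technical obstacle is establishing the precise componentwise rates $e^{(\lambda_2+\mu)y}$ and $e^{(\lambda_3+\mu)y}$ rather than the cruder uniform bound $e^{2\mu y}$: this requires splitting the Duhamel integrals componentwise and keeping track of which quadratic monomials of $F$ force each equation, but is otherwise a routine bookkeeping exercise.
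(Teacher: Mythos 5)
Your construction is correct and follows essentially the same route the paper takes (and defers to Gallay--Mascia): identify $P_1$ as a hyperbolic saddle with a two-dimensional unstable manifold spanned by $\vec{v}_2,\vec{v}_3$, parametrise trajectories on it by their leading amplitudes, use translation invariance to normalise the $e^{\lambda_2 y}$ coefficient to $-1$, and read off the remainder rates from the quadratic structure of the nonlinearity. The only quibble is notational: for $y\to-\infty$ the dominant source term is the one with the \emph{smaller} exponent, i.e.\ the pointwise maximum of $e^{2\lambda_2 y}$ and $e^{(\lambda_2+\lambda_3)y}$, not the minimum, but the resulting rates $e^{(\lambda_2+\mu)y}$ and $e^{(\lambda_3+\mu)y}$ are exactly right.
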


\begin{rem}
The free parameter, $\alpha$, arises because the form taken by the unstable manifold at $P_1$ does not impose any condition on $m$. In a sense, the choice of $\alpha$ is a choice of how fast $m$ increases from $0$ and, accordingly, $\alpha$ will influence the value that $m$ attains at $y=+\infty$. We illustrate this in Figure \ref{fig:3.1.1} and present some corresponding travelling wave profiles in Supplementary Material S3. In addition, by Remark \ref{rem:2.1}, it is clear that $\alpha = 0$ is the unique value of the shooting parameter such that the solution of \eqref{eq:2.8a}-\eqref{eq:2.8c} that satisfies \eqref{eq:3.3} stays in a region where $n \in (0,1)$, $p < 0$ and $m=0$ and satisfies the asymptotic conditions \eqref{eq:2.7d} for $\bar{m} =  0$.
\label{rem:3.1}
\end{rem}

\begin{figure}[!ht]
\begin{subfigure}[t]{0.5\textwidth}
\centering
\includegraphics[scale=0.17]{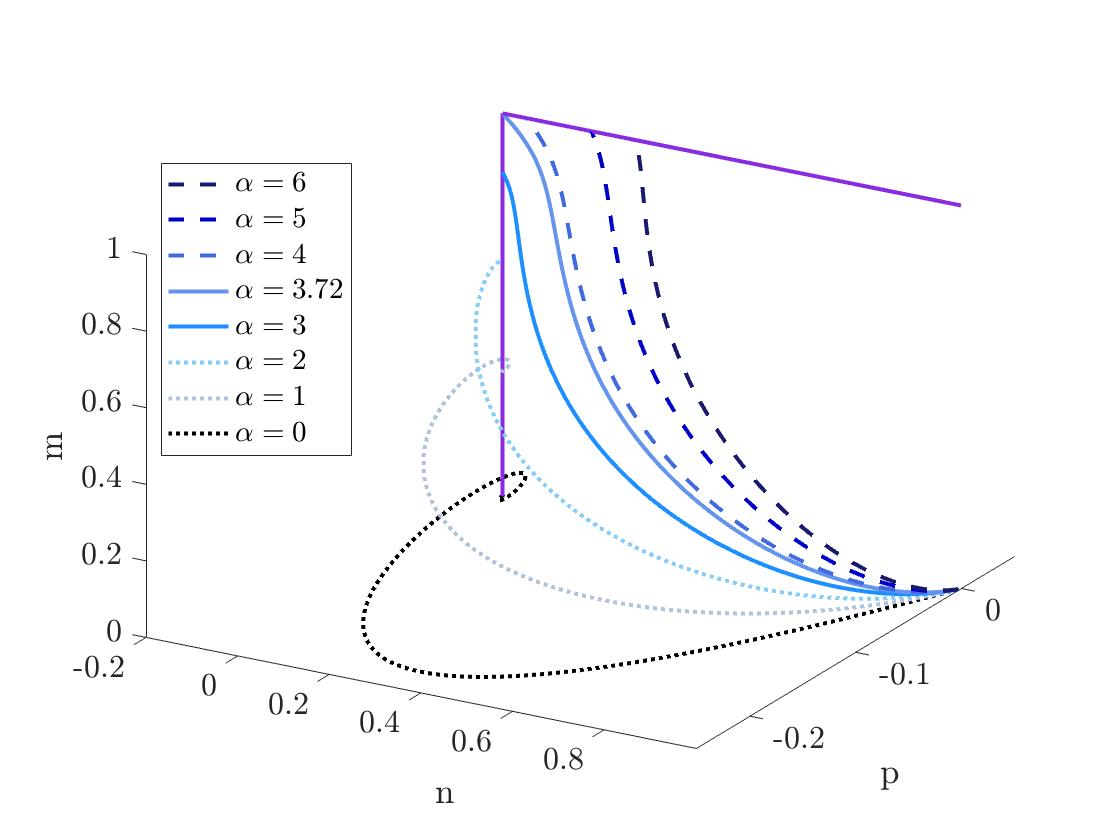}
\caption{}
\label{fig:3.1.1a}
\end{subfigure}
\hfill
\begin{subfigure}[t]{0.5\textwidth}
\centering
\includegraphics[scale=0.17]{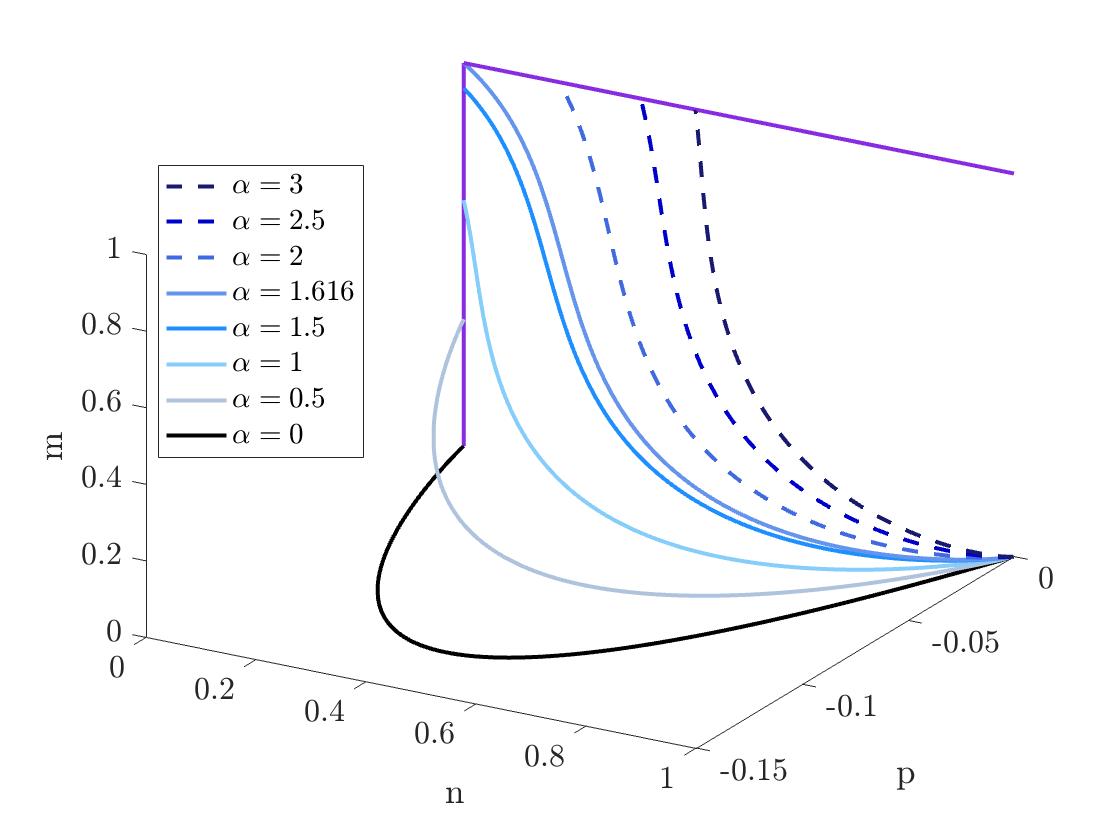} 
\caption{}
\label{fig:3.1.1b}
\end{subfigure}
\caption{Solutions of \eqref{eq:2.8a}-\eqref{eq:2.8c} subject to the asymptotic conditions \eqref{eq:3.3} for different values of the shooting parameter $\alpha$, $\kappa =1$ and $c = 1$ (a) or $c =2$ (b). The purple lines are the two continua of steady states of the system \eqref{eq:2.8a}-\eqref{eq:2.8c}, given by $(0,0,\bar{m})$, $\bar{m} \in [0,1]$, and $(\bar{n},0,1)$, $\bar{n} \in [0,1]$, respectively. Since $(n,m) = (\bar{n},1)$, $\bar{n} \in [0,1)$, are not spatially homogeneous steady states of \eqref{eq:1.3}, the dashed curves represent solutions that are not TWS of system \eqref{eq:1.3}. The dotted curves represent physically unrealistic solutions for which the $n$-component becomes negative. The values $m$ and $n$ attain at infinity appear to increase monotonically (between $0$ and $1$) with $\alpha$.}
\label{fig:3.1.1}
\end{figure}

Now, the idea is to view solutions of \eqref{eq:2.8a}-\eqref{eq:2.8c} that satisfy \eqref{eq:3.3} as functions of $\alpha$, which we define to be our shooting parameter, and $c$, which is the wave speed. In particular, we denote by $(n_{\alpha,c},p_{\alpha,c},m_{\alpha,c})$ the unique solution of \eqref{eq:2.8a}-\eqref{eq:2.8c} satisfying \eqref{eq:3.3}. Our first result, whose proof is in Supplementary Material S1, is the following:
\begin{lem}
If the solution $(n_{\alpha,c},p_{\alpha,c},m_{\alpha,c})$  is defined on some interval $J\coloneqq(-\infty,y_0)$, with $y_0 \in \mathbb{R}$, and satisfies $n_{\alpha,c}(y) >0$ for all $y \in J$, then $(n_{\alpha,c}(y),p_{\alpha,c}(y),m_{\alpha,c}(y)) \in \mathcal{D}_1$ for all $y\in J$.
\label{lem:3.2}
\end{lem}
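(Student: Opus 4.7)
The strategy is a standard trapping argument: use \eqref{eq:3.3} to show that, for $\alpha>0$, the trajectory $(n_{\alpha,c},p_{\alpha,c},m_{\alpha,c})$ enters $\mathcal{D}_1$ as $y\to -\infty$, and then rule out every possible first exit from $\mathcal{D}_1$. The case $\alpha=0$ is implicitly excluded, since Remark~\ref{rem:3.1} gives $m\equiv 0$, so the trajectory never lies in $\mathcal{D}_1$.

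First I would use \eqref{eq:3.3}, together with $\lambda_2,\lambda_3>0$, to verify that $0<n(y)<1$, $p(y)<0$ and $0<m(y)<1$ for all sufficiently negative $y$. Define
\[
y_* \coloneqq \inf\bigl\{\, y\in J : (n_{\alpha,c}(y),p_{\alpha,c}(y),m_{\alpha,c}(y)) \notin \mathcal{D}_1 \,\bigr\},
\]
and assume for contradiction that $y_*<y_0$. By continuity, $(n,p,m)(y_*)$ lies on $\partial \mathcal{D}_1$, i.e.\ at least one of $n(y_*)\in\{0,1\}$, $p(y_*)=0$ or $m(y_*)\in\{0,1\}$ holds.

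The next step is to rule out each face in turn. The face $\{n=0\}$ is forbidden by the hypothesis $n_{\alpha,c}>0$ on $J$. For the faces $\{m=0\}$ and $\{m=1\}$, equation \eqref{eq:2.8c} shows that both planes are invariant under \eqref{eq:2.8a}--\eqref{eq:2.8c}, so uniqueness of ODE solutions would force $m\equiv 0$ or $m\equiv 1$ on $J$, contradicting $m(y)\sim \alpha e^{\lambda_3 y}$ with $\alpha>0$. For the face $\{p=0\}$, provided $n(y_*)\in(0,1)$ and $m(y_*)\in(0,1)$, equation \eqref{eq:2.8b} gives $p'(y_*)=-(1-n(y_*))\,n(y_*)\,(1-m(y_*))<0$, which contradicts $p<0$ immediately to the left of $y_*$ with $p(y_*)=0$ (which forces $p'(y_*)\ge 0$). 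Finally, for $\{n=1\}$: if $p(y_*)<0$ then $n'(y_*)<0$, which is incompatible with $n$ approaching the value $1$ from below; the remaining corner $n(y_*)=1$, $p(y_*)=0$ lies on the line $\{(1,0,m):m\in\mathbb{R}\}$, which by direct substitution into \eqref{eq:2.8a}--\eqref{eq:2.8c} is invariant (with $n'\equiv p'\equiv 0$ and only $m$ evolving), so uniqueness would force $n\equiv 1$ on $J$, contradicting $n(y)=1-e^{\lambda_2 y}+\mathcal{O}(e^{(\lambda_2+\mu)y})<1$ near $-\infty$.

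The main obstacle is the degenerate corner $n(y_*)=1$, $p(y_*)=0$: the naive sign analysis that disposes of an ordinary crossing of $\{p=0\}$ fails here because $(1-n)n$ vanishes, and one must instead invoke invariance of the line $\{n=1,p=0\}$ together with ODE uniqueness. Once this is handled, the remaining cases are bookkeeping and direct sign checks in \eqref{eq:2.8a}--\eqref{eq:2.8c}.
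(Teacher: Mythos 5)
Your proof is correct and follows essentially the same trapping argument as the paper's: both rule out a first exit of $(n_{\alpha,c},p_{\alpha,c},m_{\alpha,c})$ through each face of $\mathcal{D}_1$ using the asymptotics \eqref{eq:3.3} together with sign checks in \eqref{eq:2.8a}--\eqref{eq:2.8c}. The only cosmetic differences are that the paper obtains $0<m_{\alpha,c}<1$ by explicitly integrating \eqref{eq:2.8c} rather than by invariance-plus-uniqueness, and it sidesteps your ``degenerate corner'' $n=1$, $p=0$ entirely by noting that, while $p<0$ holds up to the putative exit time, $n$ is strictly decreasing from its limit $1$ at $-\infty$ and therefore can never reach $1$.
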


Given Lemma \ref{lem:3.2}, we introduce the following variable for any $\alpha > 0$ and $c>0$:
\begin{equation}
T(\alpha,c) \coloneqq \sup{\lbrace y_0 \in \mathbb{R} \mid n_{\alpha,c}(y) >0 \text{ for all } y < y_0\rbrace} \in \mathbb{R} \cup \{+\infty\}.
\label{eq:3.4}
\end{equation}
Then, Lemma \ref{lem:3.2} implies that only one of the following holds:
\begin{itemize}
\item $T(\alpha,c) < +\infty$, so $n_{\alpha,c}(T(\alpha,c)) = 0$ and $p_{\alpha,c}(T(\alpha,c)) < 0$. In this case, $n_{\alpha,c}(y)$ becomes negative for some $y > T(\alpha,c)$ and $(n_{\alpha,c},p_{\alpha,c},m_{\alpha,c})$ does not represent a valid TWS; we disregard these values of the shooting parameter $\alpha$.

\item $T(\alpha,c) = +\infty$, which means that we have a global solution which stays in $\mathcal{D}_1$ for all $y \in \mathbb{R}$. We are interested in finding TWS for these values of $\alpha$. 
\end{itemize}

\begin{rem}
Given $\bar{m} \in (0,1)$, Lemma \ref{lem:3.2} provides a condition under which solutions of \eqref{eq:2.8a}-\eqref{eq:2.8c} that satisfy \eqref{eq:3.3} remain in $\mathcal{D}_1$, but not necessarily in $\mathcal{D}_{\bar{m}} \subset \mathcal{D}_1$. In particular, even if $n_{\alpha,c}(y) > 0$ for all $y\in J$, a solution can leave $\mathcal{D}_{\bar{m}}$. In that case, for a solution of \eqref{eq:2.8a}-\eqref{eq:2.8c} that satisfies \eqref{eq:3.3} to converge to $(0,0,\bar{m})$ as $y \to +\infty$, we must have $n(y) < 0$ for some values of $y$ (since $m$ is increasing for positive $n$). Therefore, searching for solutions that satisfy $T(\alpha,c) =+ \infty$ is a necessary condition for the existence of physically realistic TWS that converge to $(0,0,\bar{m})$ as $y \to +\infty$, but not a sufficient one ($m$ may not attain the value $\bar{m}$ for positive $n$). 
\end{rem}

\subsection{Monotonicity of solutions with respect to the shooting parameter}

A key component of our analysis is that solutions of system \eqref{eq:2.8a}-\eqref{eq:2.8c} satisfying \eqref{eq:3.3} are monotonic functions of the shooting parameter, $\alpha$, provided $n > 0$. This result, which is proven in Supplementary Material S1, can be formulated as follows:
\begin{lem}
Fix $c > 0$. If $\alpha_2 > \alpha_1 > 0$, then $T(\alpha_2,c) \geq T(\alpha_1,c)$ and the solutions of \eqref{eq:2.8a}-\eqref{eq:2.8c} defined by \eqref{eq:3.3} satisfy
\begin{equation}
n_{\alpha_2,c}(y) > n_{\alpha_1,c}(y), \quad m_{\alpha_2,c}(y) > m_{\alpha_1,c}(y),
\label{eq:3.5}
\end{equation}
for all $y \in (-\infty, T(\alpha_1,c))$. 
\label{lem:3.3}
\end{lem}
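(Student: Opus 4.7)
The plan is a continuation argument: I establish the inequalities on a left neighbourhood of $-\infty$ from a refined asymptotic expansion, then extend them to all of $(-\infty,T(\alpha_1,c))$ by contradiction. For the base step, I push Lemma~\ref{lem:3.1} one order further. Writing $m_{\alpha,c}(y) = \alpha e^{\lambda_3 y}+O(e^{2\lambda_3 y})$ and seeking $n_{\alpha,c}(y) = 1-e^{\lambda_2 y}+\gamma\,\alpha\,e^{(\lambda_2+\lambda_3)y}+o(e^{(\lambda_2+\lambda_3)y})$, substitution into \eqref{eq:2.8a}--\eqref{eq:2.8c} and matching the coefficient of $e^{(\lambda_2+\lambda_3)y}$ yields $\gamma = [(\lambda_2+\lambda_3)^2+c(\lambda_2+\lambda_3)-1]^{-1}$. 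Positivity of $\gamma$ follows because $\lambda_2$ is the positive root of $\lambda^2+c\lambda-1=0$, so the quadratic is strictly positive at $\lambda_2+\lambda_3>\lambda_2$. Setting $W := n_{\alpha_2,c}-n_{\alpha_1,c}$ and $V := m_{\alpha_2,c}-m_{\alpha_1,c}$, this gives $V\sim(\alpha_2-\alpha_1)e^{\lambda_3 y}>0$ and $W\sim\gamma(\alpha_2-\alpha_1)e^{(\lambda_2+\lambda_3)y}>0$ for $y$ sufficiently negative.

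Define $y^{*}:=\sup\{y\le T(\alpha_1,c) : W>0 \text{ and } V>0 \text{ on } (-\infty,y)\}$, so $y^{*}>-\infty$ by the base step. If $y^{*}\ge T(\alpha_1,c)$ the lemma follows, since $n_{\alpha_2,c}>n_{\alpha_1,c}>0$ on $(-\infty,T(\alpha_1,c))$ then yields $T(\alpha_2,c)\ge T(\alpha_1,c)$. Assume for contradiction $y^{*}<T(\alpha_1,c)$; then $W(y^{*}),V(y^{*})\ge 0$ with at least one equality, and Lemma~\ref{lem:3.2} places both trajectories in $\mathcal{D}_1$ at $y^{*}$, so $n_i(y^{*}),m_i(y^{*})\in(0,1)$. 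Subtracting the ODEs gives
\[
V' = \frac{\kappa}{c}\bigl[n_2(1-m_1-m_2)V+m_1(1-m_1)W\bigr],\qquad W''+cW'+(1-m_1)(1-n_1-n_2)W = n_2(1-n_2)V.
\]

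If $V(y^{*})=0$ and $W(y^{*})>0$, then $V'(y^{*}) = (\kappa/c)\,m_1(1-m_1)W(y^{*})>0$, contradicting $V'(y^{*})\le 0$ (which must hold since $V>0$ on $(-\infty,y^{*})$ and $V(y^{*})=0$). If $V(y^{*})=W(y^{*})=0$, uniqueness of solutions to \eqref{eq:2.8a}--\eqref{eq:2.8c} rules out $W'(y^{*})=0$ (otherwise the full 3D states agree at $y^{*}$, forcing identical trajectories for all $y$ and contradicting the different values of $\alpha$ in the asymptotic expansions), so $W'(y^{*})<0$; then $V''(y^{*}) = (\kappa/c)\,m_1(1-m_1)W'(y^{*})<0$, which is incompatible with $V(y^{*})=V'(y^{*})=0$ being the terminal value of the strictly positive function $V$ on $(-\infty,y^{*})$ (by Taylor, $V$ would be negative just before $y^{*}$).

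The remaining case $W(y^{*})=0$, $V(y^{*})>0$ is the main obstacle: here $W'(y^{*})\le 0$ and the $W$-equation yields $W''(y^{*}) = -cW'(y^{*})+n_2(1-n_2)V(y^{*})>0$, but neither fact directly produces a contradiction. To close this case I would integrate the identity $(e^{cy}W')' = e^{cy}[-(1-m_1)(1-n_1-n_2)W+n_2(1-n_2)V]$ from $-\infty$ to $y^{*}$, the boundary term at $-\infty$ vanishing since $e^{cy}W'(y)=O(e^{(c+\lambda_2+\lambda_3)y})\to 0$. This expresses $e^{cy^{*}}W'(y^{*})$ as the integral of a function whose positive source $e^{cy}n_2(1-n_2)V$ should dominate, once the sign-indefinite term $-e^{cy}(1-m_1)(1-n_1-n_2)W$ is controlled by exploiting that $n_1+n_2\to 2$ as $y\to-\infty$ (so that term is strictly positive near $-\infty$) and by adapting the Gallay--Mascia argument \cite{10} for their analogous step. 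The conclusion $W'(y^{*})>0$ would contradict $W'(y^{*})\le 0$, and the lemma follows.
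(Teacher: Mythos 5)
Your base step (the refined expansion giving $W\sim\gamma(\alpha_2-\alpha_1)e^{(\lambda_2+\lambda_3)y}$ with $\gamma>0$) and your treatment of the cases $V(y^*)=0$ are sound and match the spirit of the first part of the paper's proof, which is imported from Gallay--Mascia. However, the case you yourself flag as the main obstacle --- $W(y^*)=0$ with $V(y^*)>0$ --- is precisely the crux of the lemma, and your proposal does not close it. The integrated identity $e^{cy^*}W'(y^*)=\int_{-\infty}^{y^*}e^{cy}\bigl[(1-m_1)(n_1+n_2-1)W+n_2(1-n_2)V\bigr]\,\mathrm{d}y$ only yields the desired sign if $n_1+n_2\ge 1$ on all of $(-\infty,y^*)$; since both $n_i$ decrease from $1$ and may be small by the time $y$ reaches $y^*$, the first term can be negative on a set you have no control over, and "the positive source should dominate" is an assertion, not an argument. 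As written, the proof is incomplete.

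The paper avoids this difficulty entirely by working with the ratio $\rho(y)=n_{\alpha_2,c}(y)/n_{\alpha_1,c}(y)$ rather than the difference $W$. Writing each equation as $n_i''+cn_i'+g_in_i=0$ with $g_i=(1-n_i)(1-m_i)$, one finds
\begin{equation*}
\rho''+\Bigl(c+\frac{2n_1'}{n_1}\Bigr)\rho'+(g_2-g_1)\rho=0 ,
\end{equation*}
and on the interval where the inequalities \eqref{eq:3.5} are assumed to hold one has $g_2<g_1$, so the zeroth-order coefficient is negative: at any interior critical point with $\rho>0$ one gets $\rho''=(g_1-g_2)\rho>0$, so $\rho$ admits no positive interior maximum. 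Combined with $\rho(y_1)>1$, $\rho'(y_1)>0$ from the base step, this forces $\rho$ to be nondecreasing, hence $\rho(y_2)>1$ and $n_2(y_2)>n_1(y_2)$ --- exactly the conclusion your difference-based argument cannot reach. (The $m$-comparison is then obtained by integrating \eqref{eq:2.8c} explicitly, as in your $V$-cases but in exponential form.) I recommend you replace the $W$-analysis in the problematic case by this ratio/Sturm-type argument; the rest of your structure can stay.
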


Lemma \ref{lem:3.3} shows that, for fixed $c > 0$, $T(\alpha,c)$ is an increasing function of $\alpha$. Since we seek TWS that satisfy $T(\alpha,c) =+ \infty$, we define the following critical value of $\alpha$, which depends on $c$:
\begin{equation}
\alpha_0(c) \coloneqq \inf{\lbrace \alpha > 0 \mid T(\alpha,c) = + \infty \rbrace} \in [0,+\infty) \cup \{+\infty\}.
\label{eq:3.6}
\end{equation}
We then characterise $\alpha_0$ as a function of $c$ in the following lemma, whose proof is included in Supplementary Material S1:
\begin{lem}
If $c\geq2$, then $\alpha_0(c)= 0$. If $0<c<2$, then $0 < \alpha_0(c) <+\infty$.
\label{lem:3.4}
\end{lem}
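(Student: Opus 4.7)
The case split at $c = 2$ is dictated by the classical Fisher--KPP dichotomy in the invariant plane $\{m = 0\}$. Since $m'|_{m=0} = 0$, this plane is invariant, and \eqref{eq:3.3} with $\alpha = 0$ forces $m_{0,c} \equiv 0$, so $(n_{0,c}, p_{0,c})$ coincides with the Fisher--KPP travelling wave issuing from $(n,p) = (1,0)$. When $c \geq 2$, the origin is a stable node in the Fisher--KPP phase plane, giving a monotone positive front with $n_{0,c}(y) > 0$ for all $y \in \mathbb{R}$, hence $T(0,c) = +\infty$; extending Lemma \ref{lem:3.3} to $\alpha_1 = 0$ — justified by continuous dependence of solutions of \eqref{eq:2.8a}--\eqref{eq:2.8c} on data prescribed at some fixed, large-negative $y_\ast$, where \eqref{eq:3.3} is continuous in $\alpha$ — yields $n_{\alpha,c}(y) > n_{0,c}(y) > 0$ for all $\alpha > 0$ and $y \in \mathbb{R}$, so $\alpha_0(c) = 0$. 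When $0 < c < 2$, the origin is a stable spiral, so $n_{0,c}$ crosses zero transversally at some finite $T_0 := T(0,c)$ with $p_{0,c}(T_0) < 0$; the same continuous-dependence argument together with the implicit function theorem then yields a finite crossing time $T(\alpha,c)$ near $T_0$ for all sufficiently small $\alpha > 0$, so $\alpha_0(c) > 0$.

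The remaining task is to prove $\alpha_0(c) < +\infty$ when $c < 2$, that is, to exhibit some large $\alpha^\ast$ with $T(\alpha^\ast, c) = +\infty$. My key structural observation is that the line $\ell := \{(1, 0, m) : m \in [0, 1]\}$ is invariant under \eqref{eq:2.8a}--\eqref{eq:2.8c}, with reduced dynamics on $\ell$ given by the logistic ODE $m' = (\kappa/c)\, m(1-m)$ that takes $m$ monotonically from $0$ to $1$. Define $y_\alpha$ by $m_{\alpha,c}(y_\alpha) = 1/2$; from \eqref{eq:3.3} we have $y_\alpha = -\log(2\alpha)/\lambda_3 + o(1) \to -\infty$ and $(n_{\alpha,c}(y_\alpha), p_{\alpha,c}(y_\alpha)) = (1,0) + O(\alpha^{-\lambda_2/\lambda_3}) \to (1,0)$ as $\alpha \to +\infty$. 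Consequently, the shifted trajectories $\hat{y} \mapsto (n_{\alpha,c}, p_{\alpha,c}, m_{\alpha,c})(y_\alpha + \hat{y})$ converge on compact subsets of $\mathbb{R}$ to the trajectory on $\ell$ through $(1, 0, 1/2)$, which has $n \equiv 1$ and $m \to 1$. Fixing $Y$ so large that the logistic $m$-value at $\hat{y} = Y$ exceeds $1 - \eta_0/2$, and then $\alpha^\ast$ so large that the shifted trajectory at $\hat{y} = Y$ lies within $\eta_0/2$ of the limit, I obtain a trajectory entering the small neighbourhood $\{n \geq 3/4,\; |p| \leq \eta_0,\; m \geq 1 - \eta_0\}$.

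The final step is an invariant-region/bootstrap argument: starting from such initial data at $y_1 := y_{\alpha^\ast} + Y$, the $m$-equation gives $(1 - m)(y) \leq \eta_0 e^{-\mu(y - y_1)}$ with $\mu = \kappa/(8c)$ as long as $n \geq 1/2$, and variation of constants in the $p$-equation then yields $|p(y)| \leq C\eta_0 e^{-\min(c,\mu)(y - y_1)}$ and $\int_{y_1}^\infty |p(s)|\, ds \leq C'\eta_0$. Since $p < 0$ in $\mathcal{D}_1$, this bounds the total decrease of $n$ over $[y_1, +\infty)$: $n(y) \geq n(y_1) - C'\eta_0 \geq 1/2$ for $\eta_0$ chosen small, closing the bootstrap and forcing $n_{\alpha^\ast,c} > 0$ on all of $\mathbb{R}$, so $\alpha_0(c) \leq \alpha^\ast < +\infty$. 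The main obstacle is precisely this last step: ordinary continuous dependence only yields closeness on compact subintervals, whereas the limit trajectory approaches the non-hyperbolic continuum of equilibria $\{(\bar{n}, 0, 1)\}$ along an unbounded forward orbit, so a quantitative trapping estimate exploiting the exponential decay of $1 - m$ along orbits with $n$ bounded below is essential to extend the argument to all $y > y_1$.
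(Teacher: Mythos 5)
Your proposal is correct and follows essentially the same route as the paper: the $c\geq 2$ case and the lower bound $\alpha_0(c)>0$ for $c<2$ both reduce to the classical Fisher--KPP dichotomy on the invariant plane $\{m=0\}$ combined with the monotonicity of Lemma \ref{lem:3.3} (resp.\ continuous dependence on $\alpha$), and the finiteness of $\alpha_0(c)$ is obtained by the same shift that normalises $m$ to order one, passage to the logistic limit on the invariant line $\{(1,0,m)\}$, and a quantitative trapping estimate exploiting the exponential decay of $1-m$ while $n$ stays bounded below. Your variation-of-constants/$L^1$ bound on $p$ is just a repackaging of the paper's double integration of the differential inequality for $\bar{n}$, and you correctly identify that compact-interval continuous dependence alone does not suffice, which is exactly the point the paper's bootstrap addresses.
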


Lemma \ref{lem:3.4} ensures that for all $c > 0$ there exist some values of $\alpha$ for which $T(\alpha,c) = + \infty$, and thus for all $c>0$ there exist some TWS. We still need to elucidate the behaviour of solutions at infinity to determine which TWS exist for each $c >0$. 

\begin{rem}
The proof of Lemma \ref{lem:3.4} relies on showing that, for any $0<c<2$, we can choose $\alpha(c) > 0$ sufficiently large such that there exists a solution of system \eqref{eq:2.8a}-\eqref{eq:2.8c} that satisfies \eqref{eq:3.3}, remains in region $\mathcal{D}_1$ and converges to $(\bar{n},0,1)$ as $y \to +\infty$, with $\bar{n}\in (0,1)$. Such solutions are not TWS, but their existence will be crucial in proving the existence of the TWS we seek. 
\label{rem:3.3}
\end{rem}

\subsection{Behaviour of solutions at infinity}
By Lemma \ref{lem:3.4}, we know that, for any $c > 0$, there exist solutions of system \eqref{eq:2.8a}-\eqref{eq:2.8c} that satisfy \eqref{eq:3.3} and remain in region $\mathcal{D}_1$ for all $y\in \mathbb{R}$. It remains to characterise the behaviour of these solutions as $y \to +\infty$, and, in so doing, to establish whether they are TWS. Denoting the limits of components of the solution at infinity as
\begin{equation*}
    n_\infty(\alpha,c) \coloneqq \lim_{y\to +\infty} n_{\alpha,c}(y), \quad m_\infty(\alpha,c) \coloneqq \lim_{y\to +\infty} m_{\alpha,c}(y), \quad p_\infty(\alpha,c) \coloneqq \lim_{y\to +\infty} p_{\alpha,c}(y),
\end{equation*}
we introduce the following lemma.

\begin{lem}
If $T(\alpha,c) = \infty$, then the following limits exist:
\begin{equation}
n_\infty(\alpha,c) \in [0,1), \quad m_\infty(\alpha,c) \in [0,1] \quad \text{and} \quad p_\infty(\alpha,c) = 0.
\label{eq:3.7}
\end{equation}
Moreover, if $m_\infty(\alpha,c) \in [0,1)$, then $n_\infty(\alpha,c) = 0$, and, if $n_\infty(\alpha,c) \in (0,1)$, then $m_\infty(\alpha,c) = 1$.
\label{lem:3.5}
\end{lem}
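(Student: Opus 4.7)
My plan is to use the monotonicity of $n$ and $m$ forced by the condition that the orbit remains in $\mathcal{D}_1$, and then to read off the limits of $p$ and of the nonlinearity by treating \eqref{eq:2.8b} as a linear first-order ODE in $p$.

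Since $(n,p,m) \in \mathcal{D}_1$ for all $y$, equation \eqref{eq:2.8a} gives $n' = p < 0$, so $n$ is strictly decreasing and bounded below by $0$; hence $n_\infty \in [0,1)$ exists. Equation \eqref{eq:2.8c} gives $m' = (\kappa/c) m (1-m) n \geq 0$ with $m \in (0,1)$, so $m$ is non-decreasing and bounded above by $1$; hence $m_\infty \in [0,1]$ exists. In particular, the nonlinearity $f(y) := (1-n(y)) n(y) (1-m(y))$ is bounded (by $1/4$) and converges to $f_\infty := (1-n_\infty) n_\infty (1-m_\infty)$ as $y \to +\infty$.

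For $p$, I would rewrite \eqref{eq:2.8b} as the linear ODE $p' + cp = -f(y)$ and apply the integrating factor $e^{cy}$ to obtain, for any fixed $y_0$ and all $y \geq y_0$,
\[ p(y) = p(y_0)\, e^{-c(y-y_0)} - \int_{y_0}^y f(s)\, e^{-c(y-s)}\, ds. \]
The first term decays exponentially, and a standard approximate-identity argument (splitting the integral at a point $y_1$ where $|f(s) - f_\infty| < \varepsilon$ for all $s \geq y_1$, and exploiting that the kernel $e^{-c(y-s)}$ has total mass $1/c$ and concentrates near $s = y$) shows the second term tends to $-f_\infty/c$. Hence $p(y) \to -f_\infty/c$. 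But monotonicity of $n$ yields $p \in L^1((y_0,\infty))$, since $\int_{y_0}^{\infty} |p(s)|\,ds = n(y_0) - n_\infty < \infty$, and this is incompatible with $p$ converging to any nonzero limit. Therefore $p_\infty = 0$, and consequently $f_\infty = (1-n_\infty) n_\infty (1-m_\infty) = 0$.

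The two conditional claims then follow by inspection of this algebraic identity: if $m_\infty \in [0,1)$ then $1 - m_\infty > 0$, forcing $(1 - n_\infty) n_\infty = 0$, and since $n_\infty < 1$ we must have $n_\infty = 0$; if $n_\infty \in (0,1)$ then $(1-n_\infty) n_\infty > 0$, forcing $m_\infty = 1$. The only non-routine step is the identification of the limit of the convolution integral, but this is a standard exercise since $e^{-c(y-s)}\mathbf{1}_{\{s \leq y\}}$ is a one-sided approximate identity and $f$ has a limit at infinity; no compactness or Lyapunov-function argument is needed.
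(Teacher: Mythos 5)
Your proposal is correct, and for the first part (existence of $n_\infty$ and $m_\infty$ via monotonicity and boundedness inside $\mathcal{D}_1$) it coincides with the paper's argument. Where you diverge is in handling $p$ and the final conditional claims. The paper simply asserts that $p_\infty$ exists and equals $0$ ``otherwise $n$ cannot converge,'' and then invokes the qualitative fact that the limit of a convergent trajectory of an autonomous system must be an equilibrium of \eqref{eq:2.8a}--\eqref{eq:2.8c}, reading off the conditional claims from the equilibrium structure. You instead solve \eqref{eq:2.8b} by variation of constants, identify $\lim_{y\to+\infty}p(y) = -f_\infty/c$ with $f_\infty = (1-n_\infty)n_\infty(1-m_\infty)$ via the one-sided approximate-identity argument, and then use $p = n' \in L^1$ (a consequence of monotone convergence of $n$) to force $f_\infty = 0$, from which both $p_\infty = 0$ and the two conditional statements follow by pure algebra. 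Your route is slightly longer but more self-contained: it actually \emph{proves} that $\lim p$ exists, a point the paper's one-line justification glosses over (monotone convergence of $n$ alone does not imply convergence of $n'$), and it replaces the appeal to ``limit points are equilibria'' with an explicit identity. Both arguments are sound; yours buys a gap-free treatment of the $p$-component at the cost of one elementary convolution estimate.
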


\begin{proof}
Suppose that $T(\alpha,c) =+ \infty$. Then, by Lemma \ref{lem:3.2}, the solution $(n_{\alpha,c},p_{\alpha,c},m_{\alpha,c})$ of \eqref{eq:2.8a}-\eqref{eq:2.8c} that satisfies \eqref{eq:3.3} stays in region $\mathcal{D}_1$, defined by \eqref{eq:2.9}, for all $y\in \mathbb{R}$. This implies that $n_{\alpha,c}(y) > 0$ and  $n_{\alpha,c}'(y) = p_{\alpha,c}(y) < 0 \; \forall y\in \mathbb{R}$ and, as $n_{\alpha,c}$ satisfies \eqref{eq:3.3}, we also have ${n_{\alpha,c}(y) <1 \; \forall y \in \mathbb{R}}$. This means that $n$ is strictly decreasing and bounded above and below by $1$ and $0$, respectively, in $\mathbb{R}$ and, thus, $n_\infty(\alpha,c) \in [0,1)$ exists. Since $p_{\alpha,c} \coloneqq n_{\alpha,c}'$, the latter implies that $p_\infty(\alpha,c)$ exists and $p_\infty(\alpha,c)=0$; otherwise $n_{\alpha,c}(y)$ cannot converge to a finite limit as $y \to +\infty$.  Moreover, by the definition of $\mathcal{D}_1$, we have that $0 < m_{\alpha,c}(y) < 1 \; \forall y \in \mathbb{R}$ and, since ${n_{\alpha,c}(y) > 0 \; \forall y \in \mathbb{R}}$, this implies that $m'_{\alpha,c}(y) = \frac{\kappa}{c} m_{\alpha,c}(y) (1-m_{\alpha,c}(y)) n_{\alpha,c}(y) > 0 \; \forall y \in \mathbb{R}$. Hence, $m_{\alpha,c}$ is strictly increasing and bounded above and below by $1$ and $0$, respectively, in $\mathbb{R}$, and thus $m_\infty(\alpha,c) \in (0,1]$ exists. Finally, if the components of $(n_\infty(\alpha,c),p_\infty(\alpha,c),m_\infty(\alpha,c))$ exist, then this must be an equilibrium of \eqref{eq:2.8a}-\eqref{eq:2.8c}. The final statement of the lemma follows. 
\end{proof}

Lemma \ref{lem:3.5} defines the possible limits of solutions $(n_{\alpha,c},p_{\alpha,c},m_{\alpha,c})$ of \eqref{eq:2.8a}-\eqref{eq:2.8c} that satisfy \eqref{eq:3.3} and remain in region $\mathcal{D}_1$. We must now determine for which values of $c > 0$ we can find $\alpha(c) > 0$ such that the corresponding solution $(n_{\alpha,c},p_{\alpha,c},m_{\alpha,c})$: 
\begin{enumerate}
    \item remains in $\mathcal{D}_1$ and converges to $(0,0,1)$ as $y \to +\infty$, or,
    \item for each $\bar{m} \in (0,1)$, remains in $\mathcal{D}_{\bar{m}}$ and converges to $(0,0,\bar{m})$ as $y \to +\infty$.
\end{enumerate}
We consider these cases separately in the two sections that follow.

\subsection{Solutions converging to the equilibrium point $(0,0,1)$} \label{sec:3.4}
In this section, we show that, for each $c > 0$, there exists a unique value of $\alpha > 0$ such that the solution $(n_{\alpha,c}, p_{\alpha,c},m_{\alpha,c})$ of system \eqref{eq:2.8a}-\eqref{eq:2.8c} satisfying \eqref{eq:3.3} remains in region $\mathcal{D}_1$ and converges to the equilibrium point $P_2 \coloneqq (0,0,1)$ as $y \to +\infty$. This then allows us to draw conclusions on the existence and uniqueness of TWS that satisfy the asymptotic conditions \eqref{eq:2.2a}.

By Remark \ref{rem:3.3}, we have that, for any $c > 0$, there exists $\alpha(c) > 0$ sufficiently large such that the solution of system \eqref{eq:2.8a}-\eqref{eq:2.8c} satisfying \eqref{eq:3.3} remains in region $\mathcal{D}_1$ and satisfies $m_\infty =  1$. We can therefore define
\begin{equation}
\alpha_1(c) \coloneqq \inf{\lbrace \alpha > \alpha_0(c) \mid m_\infty(\alpha,c) =  1\rbrace} \in [0,+\infty) \cup \{+\infty\},
\label{eq:2.4.2}
\end{equation}
and prove the following result:
\begin{lem}
For any $c >0$, we have $0 < \alpha_1(c) < +\infty$.
\label{lem:3.6}
\end{lem}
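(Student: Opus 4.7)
The plan is to prove $\alpha_1(c) < +\infty$ and $\alpha_1(c) > 0$ separately, in both cases relying on the monotonicity provided by Lemma \ref{lem:3.3} together with an explicit integrated form of equation \eqref{eq:2.8c}. Concretely, separating variables in \eqref{eq:2.8c} gives $m'/[m(1-m)] = (\kappa/c)\,n$; integrating from $y_0$ to $y$, decomposing $\int_{y_0}^{y} n\,ds = (y - y_0) + \int_{y_0}^{y}(n - 1)\,ds$, and letting $y_0 \to -\infty$, I would obtain
\begin{equation*}
\frac{m_{\alpha,c}(y)}{1 - m_{\alpha,c}(y)} \;=\; \alpha\,\exp\!\Bigl(\tfrac{\kappa}{c}\,F_{\alpha,c}(y)\Bigr),\qquad F_{\alpha,c}(y) \,:=\, y + \int_{-\infty}^{y}\!\bigl(n_{\alpha,c}(s) - 1\bigr)\,ds,
\end{equation*}
valid for any $\alpha$ with $T(\alpha,c) = +\infty$. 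The crucial cancellation is $\lambda_3 = \kappa/c$ from \eqref{eq:3.2a}, which absorbs the divergence $\ln(m(y_0)/(1-m(y_0))) = \ln\alpha + \lambda_3 y_0 + o(1)$ supplied by Lemma \ref{lem:3.1}; the residual correction converges because $1 - n = O(e^{\lambda_2 y})$ as $y \to -\infty$.

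For the upper bound $\alpha_1(c) < +\infty$, the case $0 < c < 2$ is immediate: Remark \ref{rem:3.3} directly supplies an $\alpha > \alpha_0(c)$ whose trajectory remains in $\mathcal{D}_1$ and converges to $(\bar n, 0, 1)$ with $\bar n \in (0,1)$, yielding $m_\infty = 1$. When $c \geq 2$, so $\alpha_0(c) = 0$, the monotonicity of $n_{\alpha,c}$ in $\alpha$ (Lemma \ref{lem:3.3}) forces $F_{\alpha,c}(+\infty)$ to be non-decreasing in $\alpha$. Either $F_{\alpha_*,c}(+\infty) = +\infty$ for some finite $\alpha_* > 0$, in which case $m_\infty(\alpha_*, c) = 1$ by the identity; or $F_{\alpha,c}(+\infty) < +\infty$ for all $\alpha > 0$, in which case $\ln(m_\infty/(1-m_\infty)) = \ln\alpha + (\kappa/c)F_{\alpha,c}(+\infty) \to +\infty$ as $\alpha \to +\infty$, forcing $m_\infty \to 1$. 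Attainment in the second case follows from a topological argument on the two-dimensional unstable manifold at $P_1$: as $\alpha$ increases from $0$ to $+\infty$, the endpoint of the trajectory (which by Lemma \ref{lem:3.5} lies on $\partial\mathcal{D}_1 = \{n=0\} \cup \{m=1\}$) moves continuously from $(0, 0, 0)$ across the face $\{n = 0\}$ toward the face $\{m = 1\}$, and therefore has to pass through the corner $(0, 0, 1)$.

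For the lower bound $\alpha_1(c) > 0$, the case $0 < c < 2$ gives $\alpha_1(c) \geq \alpha_0(c) > 0$ immediately from Lemma \ref{lem:3.4}. When $c \geq 2$, I would reason perturbatively around $\alpha = 0$: by Remark \ref{rem:2.1}, at $\alpha = 0$ the $n$-component is the classical Fisher-KPP travelling wave at speed $c \geq 2$, which decays exponentially at $+\infty$, so $L(0) := \lim_{y \to +\infty} F_{0,c}(y) < +\infty$. Linearising \eqref{eq:2.8a}-\eqref{eq:2.8c} at $(0, 0, \bar m)$ for $\bar m \in [0, 1)$ shows that $(n, p)$ approaches $(0, 0)$ along a two-dimensional stable manifold with exponential rates $(-c \pm \sqrt{c^2 - 4(1 - \bar m)})/2 < 0$; combined with continuous dependence of the flow on the shooting parameter on compact $y$-intervals, this yields a uniform exponential tail estimate on $n_{\alpha,c}$ at $+\infty$ for $\alpha$ near $0$, and hence $F_{\alpha,c}(+\infty) \to L(0)$ as $\alpha \to 0^+$. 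Substituting into the identity gives $m_\infty(\alpha,c) \sim \alpha\,e^{(\kappa/c)L(0)} \to 0$, so $m_\infty(\alpha,c) < 1$ for all sufficiently small $\alpha > 0$.

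The main obstacle will be the two limit-passage steps when $c \geq 2$: (i) the attainment of $m_\infty = 1$ in the upper bound, which requires continuity of the endpoint map $\alpha \mapsto (n_\infty,m_\infty)(\alpha,c)$ as $y \to +\infty$; and (ii) the uniform tail control used in the lower bound. Both amount to propagating the continuous dependence of the flow on $\alpha$ from compact $y$-intervals to all of $\mathbb{R}$, and will rely on combining Lemma \ref{lem:3.1} (control near $-\infty$), the linearisation at the equilibria $(0, 0, \bar m)$ (control near $+\infty$), and the monotonicity of Lemma \ref{lem:3.3}, in the shooting framework of Gallay and Mascia \cite{10}.
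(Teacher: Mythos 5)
Your integrated identity $m_{\alpha,c}(y)/(1-m_{\alpha,c}(y)) = \alpha\exp\bigl(\tfrac{\kappa}{c}F_{\alpha,c}(y)\bigr)$ is correct (the paper performs the same integration of \eqref{eq:2.8c} in the proof of Lemma \ref{lem:3.12}, cf.\ \eqref{eq:3.17}), and your handling of the lower bound and of the upper bound for $0<c<2$ matches the paper's. The genuine gap is the upper bound $\alpha_1(c)<+\infty$ for $c\geq 2$. Your dichotomy does not close: in the second branch, where $F_{\alpha,c}(+\infty)<+\infty$ for every $\alpha>0$, your own identity shows that $m_\infty(\alpha,c)<1$ for \emph{every} finite $\alpha$, so the set $\lbrace \alpha>\alpha_0(c)\mid m_\infty(\alpha,c)=1\rbrace$ would be empty and $\alpha_1(c)=+\infty$. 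The observation that $m_\infty(\alpha,c)\to 1$ as $\alpha\to+\infty$ therefore cannot help, and the ``topological'' claim that the continuously moving endpoint ``has to pass through the corner $(0,0,1)$'' is false as stated: a continuous increasing map $\alpha\mapsto m_\infty(\alpha,c)$ with supremum $1$ need not attain $1$ at any finite $\alpha$ (e.g.\ $\alpha/(1+\alpha)$). What is required is to \emph{exclude} the second branch, i.e.\ to exhibit one finite $\alpha$ with $n_\infty(\alpha,c)>0$, equivalently $\int^{+\infty}n_{\alpha,c}\,\mathrm{d}s=+\infty$. This is exactly what the paper does: the construction in the proof of Lemma \ref{lem:3.4} (shifted profiles with $\bar n(y)\geq 1/2$ for all $y$ once $\alpha$ is large) is valid for every $c>0$, not only $0<c<2$, and combined with Lemma \ref{lem:3.5} it forces $m_\infty(\alpha,c)=1$ for $\alpha$ large; the paper states this extension explicitly just before \eqref{eq:2.4.2}, after which the upper bound is immediate for all $c>0$.

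For the lower bound with $c\geq 2$ your route is workable but heavier than needed, and the technical step you defer is precisely the paper's argument. The paper does not require a uniform exponential tail for $n_{\alpha,c}$ nor convergence of $F_{\alpha,c}(+\infty)$: it constructs a neighbourhood $\Omega$ of $(0,0,0)$ foliated by two-dimensional stable leaves over the centre manifold of equilibria $(0,0,\bar m)$ with $0\leq\bar m<\epsilon$, so that any trajectory entering $\Omega$ converges to some $(0,0,\bar m)$ with $\bar m<\epsilon$; continuity with respect to $\alpha$ on a compact $y$-interval then yields $m_\infty(\alpha,c)<\epsilon<1$ for all sufficiently small $\alpha>0$ directly, with no rate estimate and no use of the integral formula. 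Since the equilibria $(0,0,\bar m)$ are non-hyperbolic, your ``uniform exponential tail'' would in any case have to be justified by this same centre-manifold/foliation structure, so you may as well invoke its conclusion directly.
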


\begin{proof}
Fix $c > 0$. By Remark \ref{rem:3.3}, we know that there exists $\alpha = \alpha(c) <+ \infty$ large enough such that $m_\infty(\alpha,c) =1$ and, hence, $\alpha_1(c) <+ \infty$. If $0 < c < 2$, then we know by Lemma \ref{lem:3.4} that $\alpha_0(c) > 0$ and, therefore, by the definition of $\alpha_1(c)$, we must have $\alpha_1(c) \geq \alpha_0(c) >0$. 
 
If $c \geq 2$, then suppose, for a contradiction, that $\alpha_1(c) = 0$. A linear stability analysis about the equilibrium point $(0,0,\bar{m})$ with $\bar{m}\in [0,1)$ shows that it is non-hyperbolic with two negative eigenvalues $\lambda_1,\lambda_2$ and one zero eigenvalue $\lambda_3$:
\begin{equation}
\lambda_{1,2} = \frac{-c \pm \sqrt{c^2 - 4(1-\bar{m})}}{2}, \quad \lambda_3 = 0.
\label{eq:3.18}
\end{equation}
Therefore, by the Centre Manifold Theorem, in a small, open neighbourhood of $(0,0,\bar{m})$ with $\bar{m} \in [0,1)$, there exists a two-dimensional stable manifold spanned by the eigenvectors $\vec{v}_{1,2}$ corresponding to $\lambda_{1,2}$. In this neighbourhood, there also exists a one-dimensional centre manifold spanned by $\vec{v}_3 = (0,0,1)^\top$, which comprises the family of equilibria $(0,0,\bar{m}')$ with $\bar{m}'$ sufficiently close to $\bar{m}$. Therefore, for fixed $0 < \epsilon < 1$, we can find a neighbourhood $\Omega$ of $(0,0,0)$ that is foliated by two-dimensional stable leaves over a one-dimensional centre manifold, composed of points of the form $(0,0,\bar{m})$ with $ 0 \leq \bar{m} < \epsilon$. Then, any solution that enters $\Omega$ converges to $(0,0,\bar{m})$ as $y \to +\infty$ for some $\bar{m}$ that satisfies $0 \leq \bar{m} < \epsilon$. 

Now, by Remark \ref{rem:3.1}, $\alpha_1(c)=0$ implies that $(n_{\infty}({\alpha_1(c),c}),p_\infty({\alpha_1(c),c}),m_\infty({\alpha_1(c),c}))=(0,0,0)$. Thus, we can find $\bar{y}\in \mathbb{R}$ large enough such that ${(n_{\alpha_1(c),c}(y),p_{\alpha_1(c),c}(y),m_{\alpha_1(c),c}(y)) \in \Omega}$ for all $y \geq \bar{y}$. By continuity of solutions with respect to $\alpha$, we can find $\delta >0$ such that $(n_{\alpha,c}(\bar{y}),p_{\alpha,c}(\bar{y}),m_{\alpha,c}(\bar{y})) \in \Omega$ for any $0 < \alpha < \delta$. This implies that, for any such $\alpha$, $(n_{\alpha,c}(y),p_{\alpha,c}(y),m_{\alpha,c}(y))$ converges to $(0,0,\bar{m}) \in \Omega$ as $y \to +\infty$. Since $0 \leq \bar{m}<  \epsilon$ by our choice of $\Omega$, there exists $0< \alpha < \delta $ such that $0 \leq m_\infty(\alpha,c) < \epsilon$. However, since $\alpha_1(c)=0$, we must have $m_\infty(\alpha,c) = 1$ for all $\alpha > 0$ and we have reached the desired contradiction.
\end{proof}

Lemma \ref{lem:3.6} ensures that for any $c > 0$ and $\alpha \geq \alpha_1(c)$, the solution $(n_{\alpha,c},p_{\alpha,c},m_{\alpha,c})$ of \eqref{eq:2.8a}-\eqref{eq:2.8c}, subject to the asymptotic conditions \eqref{eq:3.3}, stays in region $\mathcal{D}_1$ and satisfies $(n_{\infty}(\alpha,c),p_{\infty}(\alpha,c),  m_{\infty}(\alpha,c)) = (n_{\infty}(\alpha,c), 0,1)$, where $n_{\infty}(\alpha,c) \in [0,1)$. We would now like to show that, for any $c > 0$, there exists a unique $\alpha \geq \alpha_1(c) $ such that $n_{\infty}(\alpha,c) = 0$. 

For the rest of this section, we suppose that $\alpha \geq \alpha_1(c)$. A linear stability analysis at the equilibrium point $P_2$ shows that $P_2$ is non-hyperbolic, with one negative eigenvalue and two zero eigenvalues. Therefore, at $P_2$, we have a one-dimensional stable manifold, $\mathcal{W}_S \subset \mathbb{R}^3$, generated by the eigenvector $\vec{v}_1 = (1/c,1,0)^\top$ associated with $\lambda_1 = -c$. We also have a two-dimensional centre manifold, $\mathcal{W}_C \subset \mathbb{R}^3$, which is tangent at $P_2$ to the subspace spanned by the eigenvectors $\vec{v}_2 = (1, 0,0)^\top$ and $\vec{v}_3=(0,0,1)^\top$ associated with $\lambda_2=\lambda_3 = 0$.
Solutions of \eqref{eq:2.8a}-\eqref{eq:2.8c} that satisfy \eqref{eq:3.3} and remain in a small enough neighbourhood of $P_2$ for all sufficiently large $y>0$ converge to $\mathcal{W}_C$. Therefore, in order to study the dynamics around $P_2$, we perform a nonlinear local stability analysis. We begin by transforming system \eqref{eq:2.8a}-\eqref{eq:2.8c} into normal form by introducing the following variables
\begin{equation}
\tilde{n}(y) = n(y) + p(y)/c, \quad \tilde{p}(y) = p(y), \quad \tilde{m}(y) = 1-m(y),
\label{eq:3.11}
\end{equation}
which satisfy the following system:
\begin{subnumcases}{}
{  \diff{\tilde{n}}{y} = -\frac{1}{c}\tilde{m}\left(\tilde{n}-\frac{\tilde{p}}{c}\right)\left(1-\tilde{n}+ \frac{\tilde{p}}{c}\right)},  \label{eq:3.12a}
\\
{\diff{\tilde{p}}{y} = -c \tilde{p} - \tilde{m}\left(\tilde{n}-\frac{\tilde{p}}{c}\right)\left(1-\tilde{n}+ \frac{\tilde{p}}{c}\right),}\label{eq:3.12b}
\\
{\diff{\tilde{m}}{y} = - \frac{\kappa}{c} \tilde{m} (1-\tilde{m}) \left(\tilde{n}-\frac{\tilde{p}}{c}\right).} \label{eq:3.12c}
\end{subnumcases}

Then, we know that, in a neighbourhood of the origin, the centre manifold can be described by a function $\mathcal{P}(\tilde{n},\tilde{m})$ such that $(\tilde{n},\tilde{p},\tilde{m}) \in \mathcal{W}_C$ if and only if $\tilde{p} = \mathcal{P}(\tilde{n},\tilde{m})$, where
\begin{equation}
\mathcal{P}(\tilde{n},\tilde{m}) = -\frac{1}{c} \tilde{n}\tilde{m} (1 + \mathcal{O}(|\tilde{n}| + |\tilde{m}|)).
\label{eq:3.13}
\end{equation}

Using this expression for the centre manifold in a neighbourhood of the origin, we must now prove that there is a solution of system \eqref{eq:3.12a}-\eqref{eq:3.12a} converging to the centre manifold $\mathcal{W}_C$ that converges to the origin as $y \to +\infty$. We are interested in solutions $(n,p,m)$ of \eqref{eq:2.8a}-\eqref{eq:2.8c} that satisfy \eqref{eq:3.3} and remain in region $\mathcal{D}_1$ for all $y \in \mathbb{R}$. Equivalently, we seek solutions $(\tilde{n},\tilde{p},\tilde{m})$ of \eqref{eq:3.12a}-\eqref{eq:3.12c} that satisfy \eqref{eq:3.3} and lie on a manifold $\mathcal{W}_C^+ \subset \mathcal{W}_C$, where
\begin{equation}
\mathcal{W}_C^+ = \lbrace (\tilde{n},\tilde{p},\tilde{m}) \in \mathcal{W}_C \mid \tilde{n},\tilde{m}>0 \rbrace.
\label{eq:3.14}
\end{equation}

The following lemma characterises such solutions that converge to the origin as $y \to +\infty$ (the proof is included in Supplementary Material S1). 
\begin{lem}
Up to translations in the variable $y$, there exists a unique solution of \eqref{eq:3.12a}-\eqref{eq:3.12c} that satisfies the asymptotic conditions \eqref{eq:3.3}, lies on the centre manifold $\mathcal{W}^{+}_{C}$, and whose components converge to zero as $y\to+\infty$, such that
\begin{equation}
\tilde{n}(y) = \frac{c}{\kappa y} + \mathcal{O}\left(\frac{1}{y^2}\right) \quad \text{and} \quad  \tilde{m}(y) = \frac{c}{ y}+ \mathcal{O}\left(\frac{1}{y^2}\right).
\label{eq:3.15}  
\end{equation}
\label{lem:3.7}
\end{lem}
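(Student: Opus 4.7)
The plan is to reduce the three-dimensional system \eqref{eq:3.12a}--\eqref{eq:3.12c} to a planar dynamical system on $\mathcal{W}_C^+$ by using the centre-manifold parameterisation $\tilde{p}=\mathcal{P}(\tilde{n},\tilde{m})$ from \eqref{eq:3.13}, and then to analyse this planar system by means of a directional blow-up at the degenerate equilibrium.

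First, substituting \eqref{eq:3.13} into \eqref{eq:3.12a} and \eqref{eq:3.12c} yields the reduced dynamics
\begin{equation*}
\tilde{n}'=-\frac{1}{c}\,\tilde{n}\tilde{m}\bigl(1+R_1(\tilde{n},\tilde{m})\bigr), \qquad \tilde{m}'=-\frac{\kappa}{c}\,\tilde{n}\tilde{m}\bigl(1+R_2(\tilde{n},\tilde{m})\bigr),
\end{equation*}
where $R_1,R_2=\mathcal{O}(|\tilde{n}|+|\tilde{m}|)$. The origin is a degenerate equilibrium with vanishing Jacobian, so standard hyperbolic arguments do not apply directly to this planar system.

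Second, the leading-order dynamics predict $\tilde{m}/\tilde{n}\to\kappa$ along trajectories in $\mathcal{W}_C^+$ tending to the origin, so I introduce the blow-up coordinate $v\coloneqq\tilde{m}/\tilde{n}$ (well-defined since $\tilde{n}>0$ on $\mathcal{W}_C^+$) together with the time rescaling $d\tau/dy=\tilde{n}$. A direct computation gives the desingularised planar system
\begin{equation*}
\frac{d\tilde{n}}{d\tau}=-\frac{v}{c}\,\tilde{n}\bigl(1+\mathcal{O}(\tilde{n})\bigr), \qquad \frac{dv}{d\tau}=\frac{v(v-\kappa)}{c}+\mathcal{O}(\tilde{n}),
\end{equation*}
for which $(\tilde{n},v)=(0,\kappa)$ is a hyperbolic saddle with eigenvalues $-\kappa/c$ and $\kappa/c$. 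By the stable-manifold theorem there is a unique one-dimensional local stable manifold at $(0,\kappa)$, hence a unique trajectory (up to translation in $\tau$) on this manifold in $\{\tilde{n}>0\}$ converging to $(0,\kappa)$. Pulling back through the blow-up and time rescaling, and exploiting the autonomy of \eqref{eq:3.12a}--\eqref{eq:3.12c}, yields the asserted unique solution up to translation in $y$.

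Finally, on the local stable manifold $v=\kappa+\mathcal{O}(\tilde{n})$, so $\tilde{m}=\kappa\tilde{n}+\mathcal{O}(\tilde{n}^2)$. Inserting this into the $\tilde{n}$-equation gives $\tilde{n}'=-\frac{\kappa}{c}\tilde{n}^2\bigl(1+\mathcal{O}(\tilde{n})\bigr)$; setting $u\coloneqq 1/\tilde{n}$ converts this to $u'=\kappa/c+\mathcal{O}(1/u)$, and a bootstrap integration gives $u(y)=\kappa y/c+\mathcal{O}(1)$ after a suitable shift of $y$, hence $\tilde{n}(y)=c/(\kappa y)+\mathcal{O}(1/y^2)$ and $\tilde{m}(y)=c/y+\mathcal{O}(1/y^2)$. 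The main obstacle is the blow-up analysis: one must verify that the desingularised system is genuinely $C^1$ at $(0,\kappa)$ and that the $\mathcal{O}(\tilde{n})$ remainders do not destroy its hyperbolic structure, so that the stable-manifold theorem applies. The asymptotic refinement is also delicate, since a naive bootstrap only yields $\mathcal{O}(\log y/y^2)$, and one must exploit the smoothness of the local stable manifold at $(0,\kappa)$ to sharpen the error to $\mathcal{O}(1/y^2)$.
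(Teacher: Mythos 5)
Your strategy is genuinely different from the paper's and is sound in outline, but the two points you flag as ``obstacles'' are where the substance lies, and one of them conceals a real gap. For comparison: the paper also reduces to the planar system on $\mathcal{W}_C^+$, but for existence it substitutes the explicit ansatz $\tilde{n}(y)=\frac{c}{\kappa y}f(\ln y)$, $\tilde{m}(y)=\frac{c}{y}g(\ln y)$ and shows that the resulting system for $(f,g)$ converges, as $z=\ln y\to+\infty$, to the autonomous system $f'=f(1-g)$, $g'=g(1-f)$, whose positive fixed point $(1,1)$ is a hyperbolic saddle with eigenvalues $\pm1$; the solution is read off from its one-dimensional stable manifold. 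This is a close cousin of your blow-up (your $v=\tilde{m}/\tilde{n}$ is, up to constants, $g/f$, and $z=\ln y$ plays the role of your rescaled time $\tau$), so the existence halves of the two arguments are essentially equivalent. For uniqueness, however, the paper does something quite different: it writes the constructed solution as a graph $\tilde{n}=\Phi(\tilde{m})$ (for $0<\kappa\le1$) or $\tilde{m}=\Psi(\tilde{n})$ (for $\kappa>1$), derives an identity of the form $\frac{d}{dy}(\tilde{n}-\Phi(\tilde{m}))=\Delta(\tilde{n},\tilde{m})\,(\tilde{n}-\Phi(\tilde{m}))$ with $\Delta>0$ near the origin, and concludes that the difference, being non-decreasing in modulus yet tending to zero, vanishes identically. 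That argument applies to an arbitrary positive solution converging to the origin without any a priori information about its direction of approach.

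That is exactly where your uniqueness step has a gap. The stable-manifold theorem gives uniqueness among trajectories of the blown-up system converging to $(0,\kappa)$, but the lemma requires uniqueness among all solutions on $\mathcal{W}_C^+$ whose components tend to zero. You assert that ``the leading-order dynamics predict $\tilde{m}/\tilde{n}\to\kappa$'' but never show that every such solution in fact satisfies this; a priori a trajectory could approach the origin with $v\to0$ or $v\to\infty$, i.e.\ through the other equilibria on the exceptional set $\{\tilde{n}=0\}$, where your single chart degenerates. The gap is fillable (e.g.\ from $d\tilde{m}/d\tilde{n}=\kappa\bigl(1+\mathcal{O}(|\tilde{n}|+|\tilde{m}|)\bigr)$, integrating from the origin gives $\tilde{m}=\kappa\tilde{n}\,(1+o(1))$), but as written uniqueness does not follow. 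Separately, your proposed repair of the error term is not convincing: even using smoothness of the stable manifold, $v=\kappa+\mathcal{O}(\tilde{n})$ generically contributes a cubic term to $\tilde{n}'=-\frac{\kappa}{c}\tilde{n}^2+\mathcal{O}(\tilde{n}^3)$ whose integration produces a resonant $\log y/y^2$ contribution, so sharpening to $\mathcal{O}(1/y^2)$ requires a genuine argument (the paper's route faces the analogous resonance at the eigenvalue $-1$ of the $(f,g)$ saddle, where the $\mathcal{O}(e^{-z})$ forcing resonates with the stable decay rate).
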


Lemma \ref{lem:3.7} establishes the existence of at least one solution of \eqref{eq:2.8a}-\eqref{eq:2.8c} that satisfies \eqref{eq:3.3}, stays in region $\mathcal{D}_1$ and converges to $(1,0,0)$ as $y\to +\infty$. Furthermore, this solution is uniquely determined on the centre manifold $\mathcal{W}_C^+$. Given that any solution of \eqref{eq:2.8a}-\eqref{eq:2.8c} that satisfies \eqref{eq:3.3}, stays in region $\mathcal{D}_1$ and converges to $(0,0,1)$ as $y\to +\infty$ must do so via $\mathcal{W}_C^+$ and, given the monotonicity result of Lemma \ref{lem:3.3}, it is easy to prove the following lemma.

\begin{lem}
Given any $c>0$, there exists at most one value of $\alpha \geq \alpha_1(c)$ of the shooting parameter such that the solution $(n_{\alpha,c}(y),p_{\alpha,c}(y) ,m_{\alpha,c}(y) )$ of \eqref{eq:2.8a}-\eqref{eq:2.8c} satisfying the asymptotic properties in Lemma \ref{lem:3.1} converges to $P_2 = (0,0,1)$ as $y\to+\infty$.
\label{lem:3.8}
\end{lem}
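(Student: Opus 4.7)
The plan is to argue by contradiction using the monotonicity result of Lemma \ref{lem:3.3} together with the centre-manifold uniqueness result of Lemma \ref{lem:3.7}. Suppose there exist two values $\alpha_2 > \alpha_1 \geq \alpha_1(c)$ such that both solutions $(n_{\alpha_i,c},p_{\alpha_i,c},m_{\alpha_i,c})$, $i=1,2$, stay in $\mathcal{D}_1$ for all $y\in\mathbb{R}$ and converge to $P_2 = (0,0,1)$ as $y\to+\infty$.

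First I would translate both solutions into the normal-form coordinates \eqref{eq:3.11} so that $P_2$ corresponds to the origin. Since the stable manifold $\mathcal{W}_S$ at $P_2$ is one-dimensional and is tangent to a direction that leaves the positive cone $\{\tilde n,\tilde m>0\}$, any trajectory that stays in $\mathcal{D}_1$ and converges to $P_2$ must approach the origin tangentially to the centre manifold, and in fact must lie on $\mathcal{W}_C^+$ for all sufficiently large $y$ (by the invariant foliation over $\mathcal{W}_C$ in a neighbourhood of $P_2$). Thus, both $(\tilde n_{\alpha_i},\tilde p_{\alpha_i},\tilde m_{\alpha_i})$ eventually coincide with trajectories on $\mathcal{W}_C^+$ that converge to the origin.

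Next I would invoke Lemma \ref{lem:3.7}, which states that, up to translation in $y$, there is a unique such trajectory on $\mathcal{W}_C^+$. Since the flow of the autonomous system \eqref{eq:2.8a}--\eqref{eq:2.8c} is invariant under shifts in $y$, this means the two full orbits coincide in phase space, and therefore there exists $\tau\in\mathbb{R}$ such that
\begin{equation*}
\bigl(n_{\alpha_2,c}(y),p_{\alpha_2,c}(y),m_{\alpha_2,c}(y)\bigr) = \bigl(n_{\alpha_1,c}(y+\tau),p_{\alpha_1,c}(y+\tau),m_{\alpha_1,c}(y+\tau)\bigr)
\end{equation*}
for all $y\in\mathbb{R}$.

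The final step is to derive a contradiction from this identity. By Lemma \ref{lem:3.3}, for every $y\in\mathbb{R}$ we have $n_{\alpha_2,c}(y) > n_{\alpha_1,c}(y)$ and $m_{\alpha_2,c}(y) > m_{\alpha_1,c}(y)$. Since the trajectories lie in $\mathcal{D}_1$, Lemma \ref{lem:3.5} (and its proof) give that $n_{\alpha_1,c}$ is strictly decreasing and $m_{\alpha_1,c}$ is strictly increasing in $y$. The first inequality combined with the shift identity forces $n_{\alpha_1,c}(y+\tau) > n_{\alpha_1,c}(y)$, whence $\tau<0$; but then the second inequality forces $m_{\alpha_1,c}(y+\tau) > m_{\alpha_1,c}(y)$, whence $\tau>0$, a contradiction.

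The main obstacle I expect is the justification, in the first step, that a trajectory which enters a sufficiently small neighbourhood of $P_2$ while remaining in $\mathcal{D}_1$ must in fact lie on $\mathcal{W}_C^+$ (rather than approach it only asymptotically through the stable foliation), so that Lemma \ref{lem:3.7} applies directly. This should follow from the fact that the unique stable direction $\vec v_1 = (1/c,1,0)^\top$ has a nonzero $p$-component while trajectories on our orbit satisfy $p_\infty=0$ and $p<0$ throughout, ruling out any nontrivial stable-fibre contribution; once that point is nailed down, the remainder of the argument reduces to the time-shift/monotonicity contradiction described above.
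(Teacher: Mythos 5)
Your argument is essentially the paper's: the proof is omitted there (deferred to Lemma 2.13 of Gallay--Mascia), but the paragraph preceding the lemma describes exactly the combination you use --- any solution converging to $(0,0,1)$ inside $\mathcal{D}_1$ must do so via $\mathcal{W}_C^+$, where Lemma \ref{lem:3.7} gives uniqueness of the orbit up to translation, and the monotonicity of Lemma \ref{lem:3.3} then forces the time-shift $\tau$ to be simultaneously negative (from $n$ strictly decreasing) and positive (from $m$ strictly increasing). The only soft spot is the one you flag yourself --- your heuristic for excluding a nontrivial stable-fibre component is not airtight, since such a component also decays and is therefore compatible with $p_\infty=0$ and with $p<0$ --- but the paper asserts membership in $\mathcal{W}_C^+$ without proof as well, so your write-up matches its level of rigour.
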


The proof of Lemma \ref{lem:3.8} is identical to that of Lemma 2.13 in \cite{10} and is, therefore, omitted.

Exploiting the continuity of solutions with respect to the shooting parameter, $\alpha$, we can extend Lemma \ref{lem:3.8} to determine the unique value of $\alpha$, given $c > 0$, for which the solution $(n_{\alpha,c}(y),p_{\alpha,c}(y) ,m_{\alpha,c}(y) )$ of \eqref{eq:2.8a}-\eqref{eq:2.8c} that satisfies \eqref{eq:3.3} converges to $(0,0,1)$ as $y\to+\infty$. For the proof of the following result, we refer the reader to the proof of Lemma 2.14 in \cite{10}.

\begin{lem}
Given any $c>0$, if $\alpha = \alpha_1(c)$, then the solution $(n_{\alpha,c}(y),p_{\alpha,c}(y) ,m_{\alpha,c}(y) )$ of \eqref{eq:2.8a}-\eqref{eq:2.8c} satisfying the asymptotic properties in Lemma \ref{lem:3.1} converges to $P_2 = (0,0,1)$ as $y\to+\infty$.
\label{lem:3.9}
\end{lem}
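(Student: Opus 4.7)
The plan is to follow the strategy of the proof of Lemma 2.14 in \cite{10}, combining continuity of solutions of \eqref{eq:2.8a}--\eqref{eq:2.8c} with respect to the shooting parameter $\alpha$ with local stability arguments at the relevant equilibria. The proof splits into two steps: (i) show that $m_\infty(\alpha_1(c), c) = 1$, and (ii) show that $n_\infty(\alpha_1(c), c) = 0$. By Lemma \ref{lem:3.5}, these two facts together force convergence to $P_2 = (0,0,1)$ as $y \to +\infty$.

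For step (i), I would argue by contradiction. Suppose $m_\infty(\alpha_1(c), c) = \bar m \in [0, 1)$; then by Lemma \ref{lem:3.5} we have $n_\infty(\alpha_1(c), c) = 0$, and the trajectory eventually enters any neighbourhood of $(0, 0, \bar m)$. Proceeding exactly as in the proof of Lemma \ref{lem:3.6}, the linearisation at $(0, 0, \bar m)$ has two eigenvalues with negative real part and one zero eigenvalue corresponding to the continuum of equilibria, so the Centre Manifold Theorem furnishes a neighbourhood $\Omega$ foliated by two-dimensional stable leaves over the one-dimensional centre manifold; any trajectory that enters $\Omega$ therefore converges to some $(0, 0, \bar m')$ with $\bar m'$ close to $\bar m$, and in particular $\bar m' < 1$. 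Using continuous dependence of solutions on $\alpha$ on compact $y$-intervals, for $\alpha$ slightly larger than $\alpha_1(c)$ the corresponding trajectory must also lie in $\Omega$ at the same finite value of $y$, and hence satisfies $m_\infty(\alpha, c) < 1$. This contradicts the fact, which follows from Lemma \ref{lem:3.3} and the infimum definition of $\alpha_1(c)$, that $m_\infty(\alpha, c) = 1$ for every $\alpha > \alpha_1(c)$.

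For step (ii), I would again argue by contradiction. Suppose $n^{*} := n_\infty(\alpha_1(c), c) > 0$; by step (i), the trajectory then converges to the equilibrium $(n^{*}, 0, 1)$ as $y \to +\infty$. A direct computation shows that the Jacobian of \eqref{eq:2.8a}--\eqref{eq:2.8c} at this point has eigenvalues $0$, $-c$ and $-\kappa n^{*}/c$, so the Centre Manifold Theorem yields a neighbourhood $\Omega'$ of $(n^{*}, 0, 1)$ foliated by two-dimensional stable leaves over the one-dimensional centre manifold, which coincides locally with the line of equilibria $(\bar n, 0, 1)$; any trajectory entering $\Omega'$ converges to some $(n', 0, 1)$ with $n'$ close to $n^{*}$. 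By continuity in $\alpha$, for $\alpha$ slightly less than $\alpha_1(c)$ the trajectory also enters $\Omega'$ at some sufficiently large $y$, and so must converge to a point whose $m$-component equals $1$. This contradicts the fact that $m_\infty(\alpha, c) < 1$ for every $\alpha < \alpha_1(c)$, which follows from the infimum definition of $\alpha_1(c)$ combined with Lemma \ref{lem:3.3}. The main obstacle in both steps is setting up the foliated neighbourhoods via centre-manifold reduction at the non-hyperbolic equilibria $(0, 0, \bar m)$ and $(n^{*}, 0, 1)$; once this local stability picture is in place, continuous dependence of solutions on $\alpha$ over compact $y$-intervals closes the argument.
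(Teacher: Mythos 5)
Your proposal is correct and follows essentially the same route as the paper, which simply defers to the proof of Lemma 2.14 in \cite{10}: a two-sided squeeze at $\alpha=\alpha_1(c)$, ruling out convergence to $(0,0,\bar m)$ with $\bar m<1$ (which would force $m_\infty(\alpha,c)<1$ for some $\alpha>\alpha_1(c)$ via the foliated neighbourhood and continuity in $\alpha$) and convergence to $(\bar n,0,1)$ with $\bar n>0$ (which would force $m_\infty(\alpha,c)=1$ for some admissible $\alpha<\alpha_1(c)$), exactly as the paper does in the proofs of Lemmas \ref{lem:3.6} and \ref{lem:3.12}. The only detail worth making explicit is that $T(\alpha_1(c),c)=+\infty$ (so the limits in Lemma \ref{lem:3.5} apply at $\alpha_1(c)$ itself), which follows since $\{\alpha: T(\alpha,c)<+\infty\}$ is open by transversality of the crossing $n=0$.
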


Using Lemma \ref{lem:3.9} and reversing the change of variables \eqref{eq:2.5}, it is straightforward to construct a unique (up to translation) solution $(\mathcal{N}(\xi), \mathcal{M}(\xi)) =(n(\Phi(\xi)), m(\Phi(\xi)))$ of system \eqref{eq:2.1a}-\eqref{eq:2.1b} that satisfies the asymptotic conditions \eqref{eq:2.2a}. This leads to our first main result, whose proof is provided in Supplementary Material S1:
\begin{thm}
Fix $\kappa > 0$. For any $c > 0$, system $\eqref{eq:1.3}$ has a weak TWS $(\mathcal{N},\mathcal{M};c)$ connecting $(1,0)$ and $(0,1)$. This solution is unique (up to translation), and $\mathcal{N}$ and $\mathcal{M}$ are monotonically strictly decreasing and increasing functions of $\xi = x - ct$, respectively. 
\label{thm:3.10}
\end{thm}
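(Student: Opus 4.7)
The plan is to reverse the desingularising change of variables from Section~2.2 to transport the solution produced by Lemma~\ref{lem:3.9} in the $y$-variable back to the original travelling-wave coordinate $\xi$, and then verify that the resulting pair $(\mathcal{N},\mathcal{M})$ meets the three conditions of Definition~\ref{def:2.1}. Given $c > 0$, Lemma~\ref{lem:3.9} supplies a unique (up to translation in $y$) orbit $(n_{\alpha_1(c),c}, p_{\alpha_1(c),c}, m_{\alpha_1(c),c})$ of \eqref{eq:2.8a}--\eqref{eq:2.8c} that leaves $P_1$ along the unstable manifold, remains in $\mathcal{D}_1$ for all $y \in \mathbb{R}$, and converges to $P_2 = (0,0,1)$ as $y \to +\infty$. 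Abbreviating these components by $n, p, m$, I define
\begin{equation*}
\xi(y) = \int_{0}^{y} \bigl(1 - m(s)\bigr)\,\mathrm{d}s,
\end{equation*}
and set $\mathcal{N}(\xi) = n(y(\xi))$, $\mathcal{M}(\xi) = m(y(\xi))$, where $y(\xi)$ is the inverse of $\xi(y)$.

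The first step I would carry out carefully is showing that $\xi : \mathbb{R} \to \mathbb{R}$ is a bijection, so that $(\mathcal{N}, \mathcal{M})$ is well-defined on all of $\mathbb{R}$. Since $0 < m(y) < 1$ on $\mathbb{R}$, we have $\xi'(y) > 0$ and thus $\xi$ is strictly increasing. As $y \to -\infty$, the asymptotics \eqref{eq:3.3} give $m(y) = \mathcal{O}(e^{\lambda_3 y})$, so $1-m \to 1$ and $\xi(y) \to -\infty$. As $y \to +\infty$, Lemma~\ref{lem:3.7} gives $1 - m(y) = \tilde{m}(y) = c/y + \mathcal{O}(1/y^2)$, so $\int^{+\infty}(1 - m)\,\mathrm{d}y$ diverges logarithmically and $\xi(y) \to +\infty$. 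Hence $y \mapsto \xi(y)$ is a $C^{\infty}$ diffeomorphism of $\mathbb{R}$, and the asymptotic behaviour of $(n, m)$ at $\pm \infty$ transfers directly to \eqref{eq:2.2a} for $(\mathcal{N}, \mathcal{M})$, establishing condition~3 of Definition~\ref{def:2.1}.

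To check conditions 1 and 2, I would use the chain rule: $\mathrm{d}\mathcal{N}/\mathrm{d}\xi = p(y)/(1-m(y))$, whence $(1-\mathcal{M})\mathrm{d}\mathcal{N}/\mathrm{d}\xi = p(y)$ in composed form, which is continuous and bounded on $\mathbb{R}$. The $L^2$-integrability reduces to $\int_{\mathbb{R}} p(y)^2 (1-m(y))\,\mathrm{d}y < \infty$, which follows from exponential decay of $p$ as $y \to -\infty$ and from the centre-manifold estimate $p = \mathcal{P}(\tilde{n},\tilde{m}) = \mathcal{O}(1/y^2)$ combined with $1-m = \mathcal{O}(1/y)$ as $y \to +\infty$. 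The integral identities \eqref{eq:2.3}--\eqref{eq:2.4} then follow by multiplying \eqref{eq:2.1a}--\eqref{eq:2.1b} (which hold classically since $(n,m)$ is a classical solution of the equivalent system \eqref{eq:2.7a}--\eqref{eq:2.7b} and $\Phi$ is smooth) by test functions $\phi, \psi$ and integrating by parts, with boundary terms vanishing by the asymptotic conditions.

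Monotonicity is immediate: $\mathrm{d}\mathcal{N}/\mathrm{d}\xi = p/(1-m) < 0$ on $\mathbb{R}$ since $(n,p,m) \in \mathcal{D}_1$, and $\mathrm{d}\mathcal{M}/\mathrm{d}\xi = (\kappa/c)\,mn > 0$ from \eqref{eq:2.1b}. Uniqueness up to translation in $\xi$ follows from the uniqueness statement in Lemma~\ref{lem:3.9} (the value $\alpha_1(c)$ is the only admissible choice of shooting parameter) together with uniqueness of the centre-manifold orbit in Lemma~\ref{lem:3.7}; translation in $y$ by a constant corresponds to translation in $\xi$ by the integral of $1-m$ over that constant interval. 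The main technical point, and the step I expect to require the most care, is the divergence of $\int (1-m)\,\mathrm{d}y$ as $y \to +\infty$: without this, $\xi$ would tend to a finite limit $\bar{\xi}$ and we would obtain only a sharp wave on $(-\infty, \bar{\xi})$ rather than a profile on all of $\mathbb{R}$; the precise $\mathcal{O}(1/y)$ rate from Lemma~\ref{lem:3.7} is what guarantees a smooth front here.
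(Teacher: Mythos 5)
Your proposal is correct and follows essentially the same route as the paper's proof: it reverses the desingularising change of variables via $\mathrm{d}\xi/\mathrm{d}y = 1-m$, uses the $1-m(y) \sim c/y$ asymptotics from Lemma \ref{lem:3.7} to show $\xi(y) \to +\infty$ (so the front is smooth rather than sharp), and deduces monotonicity and uniqueness from Lemmas \ref{lem:3.8}--\ref{lem:3.9}. Your explicit verification of conditions 1 and 2 of Definition \ref{def:2.1} is slightly more detailed than the paper's, but the substance is the same.
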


\subsection{Solutions converging to the equilibrium point $(0,0,\bar{m})$ with ${\bar{m}\in [0,1)}$}\label{sec:3.5}
In this section, we consider solutions of system \eqref{eq:2.8a}-\eqref{eq:2.8c} subject to \eqref{eq:3.3} that stay in region $\mathcal{D}_{\bar{m}} \subset \mathcal{D}_1$ and converge to $(0,0,\bar{m})$ for $\bar{m} \in (0,1)$ as $y \to +\infty$. Using arguments similar to those for the previous case, we can show that, for all $\bar{m} \in [0,1)$, there exists a strictly positive, real-valued wave speed above which the solutions we seek exist and are unique. We will refer to this wave speed as the \textit{minimal wave speed} and we will observe that it depends on $\kappa$, the rescaled degradation rate of ECM. In particular, given $\kappa > 0$, we denote the minimal wave speed by $c_\kappa^*(\bar{m})$ for each $\bar{m} \in[0,1)$. This will enable us to draw some conclusions on the existence and uniqueness of TWS that satisfy the asymptotic conditions \eqref{eq:2.2b}.

At this stage, we have no information about the possible values of $c_\kappa^*(\bar{m})$ for $\bar{m} \in(0,1)$ and $\kappa > 0$. More specifically, given $\kappa > 0$, we currently have $c_\kappa^*(\bar{m}) \in \mathbb{R}_+^*$ for each $\bar{m} \in(0,1)$. For $\bar{m}=0$, by Remark \ref{rem:2.1}, it is straightforward to show that $c_\kappa^*(0)=2$ for all $\kappa > 0$. To characterise the minimal wave speed for $\bar{m}\in(0,1)$, we begin by proving a non-existence result.
\begin{lem}
Fix $\kappa > 0$ and $\bar{m} \in (0,1)$. If $0 < c < 2 \sqrt{1-\bar{m}}$, then there is no $\alpha' \in [\alpha_0(c),\infty)$ such that the solution $(n_{\alpha',c},p_{\alpha',c},m_{\alpha',c})$ of \eqref{eq:2.8a}-\eqref{eq:2.8c} that satisfies the asymptotic properties in Lemma \ref{lem:3.1} converges to $(0,0,\bar{m})$ as $y \to +\infty$.
\label{lem:3.11}
\end{lem}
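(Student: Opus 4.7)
The plan is to argue by contradiction via a Sturm-type oscillation argument applied to the scalar second-order ODE \eqref{eq:2.7a} satisfied by $n$. The intuition is that, near the limit $(0,0,\bar{m})$, the $n$-equation linearises to a damped harmonic oscillator with damping $c$ and stiffness $1-\bar{m}$; since $c < 2\sqrt{1-\bar{m}}$, the discriminant $c^2 - 4(1-\bar{m})$ is negative (cf.\ \eqref{eq:3.18}) and the solutions of the linearisation oscillate. Because any solution of \eqref{eq:2.8a}-\eqref{eq:2.8c} staying in $\mathcal{D}_1$ must have $n > 0$ throughout by Lemma~\ref{lem:3.2}, this forced oscillation should supply the contradiction.

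Concretely, I would suppose such an $\alpha' \in [\alpha_0(c),\infty)$ exists and write $n \coloneqq n_{\alpha',c}$, $m \coloneqq m_{\alpha',c}$. Then $n(y) > 0$ for all $y \in \mathbb{R}$, and Lemma~\ref{lem:3.5} combined with the convergence hypothesis gives $n(y) \to 0$ and $m(y) \to \bar{m}$ as $y \to +\infty$. Rewriting \eqref{eq:2.7a} along this trajectory yields
\begin{equation*}
n'' + c\,n' + f(y)\,n = 0, \qquad f(y) \coloneqq \bigl(1-n(y)\bigr)\bigl(1-m(y)\bigr),
\end{equation*}
with $f(y) \to 1-\bar{m}$. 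The Liouville substitution $u(y) \coloneqq e^{cy/2}n(y)$, which has exactly the same zeros as $n$, eliminates the first-order term and produces
\begin{equation*}
u'' + \Bigl(f(y) - \tfrac{c^2}{4}\Bigr)\,u = 0.
\end{equation*}

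Since $c < 2\sqrt{1-\bar{m}}$, I can fix $\omega^2 \in \bigl(0,\,1-\bar{m}-c^2/4\bigr)$ and then, using $f(y) \to 1-\bar{m}$, choose $Y > 0$ so large that $f(y) - c^2/4 > \omega^2$ for every $y \geq Y$. Applying Sturm's comparison theorem against the constant-coefficient oscillator $v'' + \omega^2 v = 0$ on $[Y,+\infty)$ then shows that, between any two consecutive zeros of $v$ (which are spaced $\pi/\omega$ apart), the function $u$ has a zero. In particular $u$, and therefore $n$, vanishes at some $y_0 > Y$, contradicting $n > 0$ on $\mathcal{D}_1$.

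The step I expect to require the most care is justifying that $f(y)$ really does tend to $1-\bar{m}$ as $y \to +\infty$. This follows from Lemma~\ref{lem:3.5}, which guarantees the existence of the limits $n_\infty(\alpha',c)$ and $m_\infty(\alpha',c)$ whenever $T(\alpha',c) = +\infty$; the convergence hypothesis then identifies these as $0$ and $\bar{m}$, respectively, so the required inequality $f(y) - c^2/4 > \omega^2$ holds on a tail interval. Once this asymptotic control is in hand, the remainder of the argument is a textbook application of the Liouville substitution and Sturm's comparison theorem, with the threshold $c = 2\sqrt{1-\bar{m}}$ arising naturally as the critical damping of the linearised $n$-equation, independent of the specific value of $\kappa > 0$.
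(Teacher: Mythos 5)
Your proposal is correct and follows essentially the same route as the paper: both argue by contradiction via Sturm's comparison theorem on a tail interval where the effective reaction coefficient $(1-n)(1-m)$ is close to $1-\bar{m}$, so that the underdamping condition $c < 2\sqrt{1-\bar{m}}$ forces $n$ to oscillate and vanish, contradicting positivity in $\mathcal{D}_1$. The only cosmetic difference is that you first remove the damping term with the Liouville substitution $u = e^{cy/2}n$ and compare with $v'' + \omega^2 v = 0$, whereas the paper compares directly with the underdamped constant-coefficient equation $n'' + cn' + (1-\bar{m}-\epsilon)(1-\epsilon)n = 0$.
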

\begin{proof}
Fix $\kappa > 0$ and $\bar{m} \in [0,1)$ and suppose that $ 0 < c < 2 \sqrt{1-\bar{m}}$. We suppose for a contradiction that there exists $\alpha' \in [\alpha_0(c),\infty)$ such that the solution $(n_{\alpha',c},p_{\alpha',c},m_{\alpha',c})$ of \eqref{eq:2.8a}-\eqref{eq:2.8c} that satisfies \eqref{eq:3.3} converges to $(0,0,\bar{m})$ as $y \to +\infty$. By the definition of $\alpha_0(c)$, this implies that $(n_{\alpha',c},p_{\alpha',c},m_{\alpha',c})$ stays in region $\mathcal{D}_1$ for all $y \in \mathbb{R}$. Now, we can choose $\epsilon > 0$ small enough such that $0 < c < 2 \sqrt{(1-\bar{m}-\epsilon)(1-\epsilon)}$ and we can also find $\bar{y}$ sufficiently large such that $n_{\alpha',c}(y) < \epsilon$ and $m_{\alpha',c}(y) < \bar{m}+\epsilon$ for all $y \geq \bar{y}$. Solutions of the constant coefficient second order ODE
\begin{equation*}
\begin{aligned}
  & n'' + cn'+(1-\bar{m}-\epsilon)(1-\epsilon)n =0,\\
  \text{with} \quad & \lim_{y\to- \infty} n(y) = 1, \quad \lim_{y\to+\infty} n(y) = 0, \quad \lim_{y\to \pm \infty} n'(y) = 0,
\end{aligned}
\end{equation*}
have infinitely many zeros in $(\bar{y},\infty)$ (since its characteristic equation has complex roots). Since  $(1-\bar{m}-\epsilon)(1-\epsilon) < (1-m_{\alpha',c}(y))(1-n_{\alpha',c}(y))$ for all  $y \in (\bar{y},\infty)$, Sturm's Comparison Theorem implies that $n_{\alpha',c}(y)$ must also have infinitely many zeros in $(\bar{y},+\infty)$. Therefore, $(n_{\alpha',c},p_{\alpha',c},m_{\alpha',c})$ exits region $\mathcal{D}_1$ (and $\mathcal{D}_{\bar{m}}$), contradicting the assumption that $\alpha' \geq \alpha_0(c)$.
\end{proof}

Given $\kappa > 0$ and $\bar{m} \in (0,1)$, if the minimal wave speed, $c^*_\kappa(\bar{m})$, exists, then Lemma \ref{lem:3.11} yields a lower bound for $c^*_\kappa(\bar{m})$. More specifically, for all $\kappa >0$ and $\bar{m} \in (0,1)$, $c^*_\kappa(\bar{m}) \geq 2 \sqrt{(1-\bar{m})}$. 

\begin{lem}
Fix $\kappa > 0$. If $c \geq 2$, then, for all $\bar{m} \in \left(0, 1\right)$, there exists a unique $\alpha \in (\alpha_{0}(c),\alpha_1(c))$ such that the solution of \eqref{eq:2.8a}-\eqref{eq:2.8c} satisfying the asymptotic properties in Lemma \ref{lem:3.1} converges to $(0,0,\bar{m})$ as $y\to +\infty$.
\label{lem:3.12}
\end{lem}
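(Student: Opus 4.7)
The plan is to show that $\alpha\mapsto m_\infty(\alpha,c)$ is a continuous, strictly increasing bijection from $(\alpha_0(c),\alpha_1(c))$ onto $(0,1)$, from which the existence and uniqueness of $\alpha$ satisfying $m_\infty(\alpha,c)=\bar m$ for each $\bar m\in(0,1)$ follow by the intermediate value theorem. The setup is immediate: since $c\geq 2$, Lemma \ref{lem:3.4} gives $\alpha_0(c)=0$ and Lemma \ref{lem:3.6} gives $\alpha_1(c)\in(0,+\infty)$. For any $\alpha\in(0,\alpha_1(c))$, Lemma \ref{lem:3.5} yields convergence to $(0,0,m_\infty(\alpha,c))$ with $m_\infty(\alpha,c)\in(0,1)$: the upper bound is forced by the definition of $\alpha_1(c)$ combined with the monotonicity in Lemma \ref{lem:3.3}, and the lower bound by the strict positivity and $y$-monotonicity of $m_{\alpha,c}$.

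The central step will be the derivation of an explicit integral identity for $m_\infty$. Writing \eqref{eq:2.8c} in the form $\tfrac{d}{dy}\log\tfrac{m}{1-m}=\tfrac{\kappa}{c}\,n$, integrating from $y_0$ to $+\infty$, and using both the expansion from Lemma \ref{lem:3.1} (which gives $\log\tfrac{m(y_0)}{1-m(y_0)}=\log\alpha+\lambda_3 y_0+o(1)$ as $y_0\to-\infty$) and the decomposition $\int_{y_0}^{+\infty}n\,ds=-y_0+\int_{-\infty}^{0}(n-1)\,ds+\int_{0}^{+\infty}n\,ds+o(1)$, the divergent $y_0$-contributions cancel precisely because $\lambda_3=\kappa/c$, and passing to $y_0\to-\infty$ produces
\begin{equation}
\log\frac{m_\infty(\alpha,c)}{1-m_\infty(\alpha,c)}=\log\alpha+\frac{\kappa}{c}\,\mathcal{I}(\alpha,c),\qquad \mathcal{I}(\alpha,c):=\int_{-\infty}^{0}\bigl(n_{\alpha,c}(s)-1\bigr)\,ds+\int_{0}^{+\infty}n_{\alpha,c}(s)\,ds.
\label{eq:plan-identity}
\end{equation}
Both integrals are finite for $\alpha\in(0,\alpha_1(c))$: at $-\infty$ Lemma \ref{lem:3.1} gives $n-1=O(e^{\lambda_2 s})$, and at $+\infty$ the linearisation at $(0,0,\bar m)$ with $\bar m<1$ has eigenvalues $\lambda_{1,2}=(-c\pm\sqrt{c^2-4(1-\bar m)})/2<0$ (and $\lambda_3=0$) when $c\geq 2$, forcing exponential decay of $n$.

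From \eqref{eq:plan-identity} the three required properties of $m_\infty(\cdot,c)$ all follow cleanly. Strict monotonicity is immediate: Lemma \ref{lem:3.3} gives $n_{\alpha_2,c}>n_{\alpha_1,c}$ pointwise for $\alpha_2>\alpha_1$, so $\mathcal{I}(\alpha,c)$ is strictly increasing, as is $\log\alpha$. Continuity of the right-hand side (hence of $m_\infty$) will follow from continuous dependence of $n_{\alpha,c}$ on $\alpha$ together with a dominated-convergence argument exploiting uniform exponential tail bounds at both ends of $\mathbb{R}$. For the boundary limits: as $\alpha\to\alpha_1(c)^-$ the trajectory converges to the one of Lemma \ref{lem:3.9}, whose $n$-component decays only polynomially ($n(y)\sim c/(\kappa y)$ by Lemma \ref{lem:3.7}), so Fatou gives $\mathcal{I}(\alpha,c)\to+\infty$ and therefore $m_\infty\to 1$; as $\alpha\to 0^+$ the trajectory approaches the classical Fisher-KPP profile in the invariant plane $\{m=0\}$ (Remark \ref{rem:2.1}), for which $\mathcal{I}$ remains finite, so the $\log\alpha$ term forces $m_\infty\to 0$.

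Combining strict monotonicity, continuity, and the two boundary limits, the intermediate value theorem yields, for each $\bar m\in(0,1)$, a unique $\alpha\in(\alpha_0(c),\alpha_1(c))=(0,\alpha_1(c))$ with $m_\infty(\alpha,c)=\bar m$, as claimed. The main technical obstacle I expect is controlling the $o(1)$ remainders at $y_0\to-\infty$ uniformly enough to rigorously justify the cancellation that yields \eqref{eq:plan-identity}, and, in the same spirit, justifying the Fatou-based divergence $\mathcal{I}(\alpha,c)\to+\infty$ at $\alpha=\alpha_1(c)^-$, which is what pins down the boundary limit $m_\infty\to 1$; by contrast, the monotonicity and continuity ingredients follow routinely from Lemmas \ref{lem:3.1}, \ref{lem:3.3}, and standard continuous dependence.
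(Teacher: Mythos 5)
Your proposal is correct and proves the right statement, but it reorganises the argument around a single identity where the paper runs three separate sub-arguments, so it is worth comparing. Both proofs ultimately rest on the same explicit integration of \eqref{eq:2.8c}: the paper normalises the resulting formula at $y=+\infty$ (its equation \eqref{eq:3.17}) and uses it only negatively, to rule out $m_\infty(\alpha',c)=m_\infty(\alpha'',c)$ for $\alpha'<\alpha''$, whereas you normalise at $y=-\infty$, which makes the shooting parameter appear explicitly as the additive term $\log\alpha$ and turns strict monotonicity of $\alpha\mapsto m_\infty(\alpha,c)$ into a one-line consequence of Lemma \ref{lem:3.3}. The genuine divergence is in the continuity and endpoint steps: the paper proves continuity via the foliation of a neighbourhood of $(0,0,\bar m')$ by stable leaves over the centre manifold, and obtains the value $1$ at the right endpoint directly from Lemma \ref{lem:3.9} together with a separate $y$-monotonicity argument at $\alpha_1(c)$; you instead get continuity from dominated convergence applied to $\mathcal{I}(\alpha,c)$ and the limit $m_\infty\to 1$ from Fatou plus the non-integrable $c/(\kappa y)$ tail of the critical trajectory supplied by Lemma \ref{lem:3.7}. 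Your route buys a more quantitative statement (an explicit formula for $m_\infty$ in terms of $\alpha$) and avoids invoking the foliation structure a second time; the paper's route avoids any tail estimates at $+\infty$.

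Two remarks on the points you flag as technical obstacles. First, the cancellation yielding \eqref{eq:plan-identity} needs no uniform control of remainders: at fixed $\alpha$ the relation $\log\tfrac{m(Y)}{1-m(Y)}-\log\tfrac{m(y_0)}{1-m(y_0)}=\tfrac{\kappa}{c}\int_{y_0}^{Y}n\,ds$ is exact, the left side is independent of $y_0$, and the $\lambda_3 y_0$ terms cancel identically because $\lambda_3=\kappa/c$, so you may simply pass to the limit $y_0\to-\infty$ term by term. Second, the dominating functions you need come free from Lemma \ref{lem:3.3}: for $\alpha$ near an interior point $\alpha'$ pick $\alpha''\in(\alpha',\alpha_1(c))$ and dominate $n_{\alpha,c}$ on $(0,\infty)$ by the integrable $n_{\alpha'',c}$, and dominate $1-n_{\alpha,c}$ on $(-\infty,0)$ by $1-n_{\alpha''',c}$ for some $\alpha'''<\alpha'$; the same majorants give the boundedness of $\mathcal{I}$ as $\alpha\to 0^+$ and the uniform lower bound on the $(-\infty,0)$ contribution needed for the Fatou step. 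With these observations your outline closes into a complete proof.
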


\begin{proof}
Fix $\kappa >0$ and suppose that $c \geq 2$. By Lemmas \ref{lem:3.4} and \ref{lem:3.6}, we know that $0 = \alpha_0(c) < \alpha_1(c)$. Then, by the definition of $\alpha_0(c)$, we have that, for any $\alpha \in (\alpha_0(c),\alpha_1(c))$, $T(\alpha,c) = +\infty$, the solution $(n_{\alpha,c},p_{\alpha,c},m_{\alpha,c})$ of \eqref{eq:2.8a}-\eqref{eq:2.8c} satisfying \eqref{eq:3.3} stays in the region $\mathcal{D}_1$ for all $y \in \mathbb{R}$ by Lemma~\ref{lem:3.2}. Then, by Lemma \ref{lem:3.5}, we know that, for every $\alpha \in (\alpha_{0}(c),\alpha_1(c))$, the limits $n_\infty(\alpha,c)$, $p_\infty(\alpha,c)$, $m_\infty(\alpha,c)$ exist. In addition, by monotonicity of $n_{\alpha,c}$ and $m_{\alpha,c}$ with respect to $\alpha$ (see Lemma \ref{lem:3.3}) and the fact that $(n_\infty(\alpha_1(c),c), m_\infty(\alpha_1(
c),c)) =(0,1)$ by Lemma \ref{lem:3.9}, we must have 
\begin{align*}
   0 = n_\infty(\alpha_{0}(c),c) & \leq  n_\infty(\alpha,c) \leq n_\infty(\alpha_1(c),c) = 0 \\
  0 = m_\infty(\alpha_{0}(c),c) & \leq  m_\infty(\alpha,c) \leq m_\infty(\alpha_1(c),c) = 1.
\end{align*}
 We recall that $\alpha_{0}(c) = 0$ and $\alpha_1(c)$ are the unique values of the shooting parameter for which the solution of \eqref{eq:2.8a}-\eqref{eq:2.8c} that satisfies \eqref{eq:3.3} remains in region $\mathcal{D}_1$ and converges to $(0,0,0)$ and $(0,0,1)$ as $y \to +\infty$, respectively. Therefore, we find that, for every $\alpha \in (\alpha_{0}(c),\alpha_1(c))$, the limits for $(n_{\alpha,c},p_{\alpha,c},m_{\alpha,c})$ as $y \to +\infty$ must satisfy:
\begin{equation}
 n_\infty(\alpha,c) = 0, \quad m_\infty(\alpha,c) \in (0,1)\quad \text{and} \quad p_\infty(\alpha,c) = 0.
 \label{eq:3.16}
\end{equation}

We will now prove that the mapping $\alpha \mapsto m_\infty(\alpha,c)$ is continuous and strictly increasing on $[\alpha_{0}(c),\alpha_1(c)]$. Choose $\alpha', \alpha''$ such that $\alpha_{0}(c) < \alpha' < \alpha'' < \alpha_1(c)$. Suppose, for a contradiction, that $$\displaystyle \lim_{y \to +\infty} m_{\alpha',c}(y) = m_\infty(\alpha',c) =\bar{m}= m_\infty(\alpha'',c) = \lim_{y \to +\infty} m_{\alpha'',c}(y).$$ Irrespective of the asymptotic conditions \eqref{eq:3.3}, we can solve equation \eqref{eq:2.8c} for $y \in \mathbb{R}$ and impose $\displaystyle \lim_{y \to +\infty} m(y) = \bar{m} \in (0,1)$ to obtain:
\begin{equation}
   m(y) = \left[1+ \frac{1-\bar{m}}{\bar{m}}\exp{\left(\frac{\kappa}{c} \int_y^\infty n(s) \mathrm{d}s\right)} \right]^{-1}.
   \label{eq:3.17}
\end{equation}
Any solution for which $\displaystyle \lim_{y \to +\infty} m(y) = \bar{m} \in (0,1)$ must therefore take the form \eqref{eq:3.17}. Thus, $m_{\alpha',c}$ and $m_{\alpha'',c}$ take the form \eqref{eq:3.17}, with $n$ replaced by $n_{\alpha',c}$ and $n_{\alpha'',c}$, respectively. Now, by Lemma \ref{lem:3.3}, we know that $m_{\alpha',c}(y) < m_{\alpha'',c}(y)$ for all $y \in \mathbb{R}$ since $\alpha' < \alpha''$. We therefore have, for any $y \in \mathbb{R}$,
\begin{align}
 & \left[1+ \frac{1-\bar{m}}{\bar{m}}\exp{\left(\frac{\kappa}{c} \int_y^\infty n_{\alpha',c}(s) \mathrm{d}s\right)} \right]^{-1}  < \left[1+ \frac{1-\bar{m}}{\bar{m}}\exp{\left(\frac{\kappa}{c} \int_y^\infty n_{\alpha'',c}(s) \mathrm{d}s\right)} \right]^{-1}\\
 &\Rightarrow \int_y^\infty (n_{\alpha',c}(s)- n_{\alpha'',c}(s)) \mathrm{d}s > 0.\label{eq:star}
\end{align}
Since $\left(n_{\alpha',c}(y)- n_{\alpha'',c}(y)\right) < 0$ for all $y \in \mathbb{R}$ by Lemma \ref{lem:3.3}, the inequality \eqref{eq:star} cannot hold and we have reached a contradiction. Since we have that $m_\infty(\alpha',c) \leq m_\infty(\alpha'',c)$, by monotonicity of solutions with respect to $\alpha$, and that $m_\infty(\alpha',c) \neq m_\infty(\alpha'',c)$, by the above argument, we conclude that ${m_\infty(\alpha',c) < m_\infty(\alpha'',c)}$. This proves that the mapping $\alpha \mapsto m_\infty(\alpha,c)$ is strictly increasing on $(\alpha_{0}(c),\alpha_1(c))$. Using the fact that $\alpha_{0}(c)$ and $\alpha_1(c)$ are, respectively, the unique values of the shooting parameter for which the solution of \eqref{eq:2.8a}-\eqref{eq:2.8c} given by Lemma \ref{lem:3.1} converges to $(0,0,0)$ and $(0,0,1)$ as $y \to +\infty$, we have that $\alpha \mapsto m_\infty(\alpha,c)$ is strictly increasing on $[\alpha_{0}(c),\alpha_1(c)]$.

We now prove that the mapping $\alpha \mapsto m_\infty(\alpha,c)$ is continuous on $[\alpha_{0}(c),\alpha_1(c)]$. For fixed $\alpha' \in [\alpha_{0}(c),\alpha_1(c))$, \eqref{eq:3.16} implies that $n_\infty(\alpha',c) = 0, p_\infty(\alpha',c) = 0$ and $m_\infty(\alpha',c) = \bar{m}' \in [0,1)$. In the proof of Lemma \ref{lem:3.6}, we performed a linear stability analysis about the equilibrium point $(0,0,\bar{m}')$ for $\bar{m}' \in [0,1)$. We showed that, for fixed $\epsilon > 0$, we can find a neighbourhood $\Omega$ of $(0,0,\bar{m}')$ that is foliated by two-dimensional stable leaves over a one-dimensional centre manifold, which comprises equilibria of the form $(0,0,\bar{m})$ for $|\bar{m} - \bar{m}'| < \epsilon$. Then, any solution that enters $\Omega$ converges to $(0,0,\bar{m})$ as $y \to +\infty$ for some $\bar{m}$ that satisfies $|\bar{m} - \bar{m}'| < \epsilon$. Since $(n_{\alpha',c}(y),p_{\alpha',c}(y),m_{\alpha',c}(y))$ converges to $(0,0,\bar{m}')$ as $y \to +\infty$, we can find $\bar{y}\in \mathbb{R}$ large enough such that $(n_{\alpha',c}(y),p_{\alpha',c}(y),m_{\alpha',c}(y)) \in \Omega$ for all $y \geq \bar{y}$. By continuity of solutions with respect to $\alpha$, we can find $\delta >0$ such that $(n_{\alpha'',c}(\bar{y}),p_{\alpha'',c}(\bar{y}),m_{\alpha'',c}(\bar{y})) \in \Omega$ for any $\alpha'' \in [\alpha_{0}(c),\alpha_1(c))$ such that $|\alpha'- \alpha''| < \delta$. This implies that, for any such $\alpha''$, $(n_{\alpha'',c}(y),p_{\alpha'',c}(y),m_{\alpha'',c}(y))$ converges to $(0,0,\bar{m}'') \in \Omega$ as $y \to +\infty$, for some $\bar{m}'' \neq \bar{m}'$ (since $m_\infty(\alpha,c)$ is strictly increasing with $\alpha$). By our choice of $\Omega$, $|\bar{m}'' - \bar{m}'| < \epsilon$, i.e. $|m_\infty(\alpha'',c) - m_\infty(\alpha',c)| < \epsilon$ for any $\alpha''\in [\alpha_{0}(c),\alpha_1(c))$ such that $|\alpha'- \alpha''| < \delta$. This proves continuity of the mapping $\alpha \mapsto m_\infty(\alpha,c)$ on $[\alpha_{0}(c),\alpha_1(c))$. 

We finally show continuity at $\alpha_1(c)$. We fix $\epsilon > 0$ and note that, since $m_\infty(\alpha_1(c),c) =1$, we can find $\bar{y} \in \mathbb{R}$ large enough such that $|m_{\alpha_1(c),c}(y) - 1| < \epsilon/2$ for all $y \geq \bar{y}$. By continuity of solutions with respect to $\alpha$, we can find $\delta >0$ such that ${|m_{\alpha',c}(\bar{y})- m_{\alpha_1(c),c}(\bar{y})|< \epsilon/2}$ for any $\alpha' \in [\alpha_{0}(c),\alpha_1(c)]$ satisfying $|\alpha'- \alpha_1(c)| < \delta$, i.e. for any $\alpha' \in (\alpha_1(c)-\delta, \alpha_1(c)]$. Therefore, we have that $|m_{\alpha',c}(\bar{y})- 1| < \epsilon$ for any $\alpha' \in (\alpha_1(c)-\delta, \alpha_1(c)]$. Moreover, for any ${\alpha' \in (\alpha_1(c)-\delta, \alpha_1(c)]}$, the function $m_{\alpha',c}(y)$ is strictly increasing for all $y \in \mathbb{R}$ and bounded above by $1$, so $m_\infty(\alpha',c) \in (m_{\alpha',c}(\bar{y}),1]$. In particular, for any $\alpha' \in (\alpha_1(c)-\delta, \alpha_1(c)]$, we have ${|m_\infty(\alpha',c)- 1| = |m_\infty(\alpha',c)- m_\infty(\alpha_1(c),c)|  < \epsilon}$. This proves continuity of the mapping ${\alpha \mapsto m_\infty(\alpha,c)}$ at $\alpha_1(c)$. 

We have now shown that the mapping $\alpha \mapsto m_\infty(\alpha,c)$ is strictly increasing and continuous on $[\alpha_{0}(c),\alpha_1(c)]$. Since $m_\infty(\alpha_{0}(c),c) = 0$ and $m_\infty(\alpha_1(c),c) =1$, application of the Intermediate Value Theorem enables us to conclude that, for any $\bar{m} \in (0,1)$, there exists a unique ${\alpha \in (\alpha_{0}(c),\alpha_1(c))}$ such that $m_\infty(\alpha,c) = \bar{m}$. 
\end{proof}

\begin{rem}
Using a similar proof to the above, we can generalise Lemma \ref{lem:3.12} to obtain the following result. Given $\kappa, c >0$, suppose that there exists a unique value of the shooting parameter, $\alpha \in [\alpha_0(c),\alpha_1(c))$, such that the solution $(n_{\alpha,c},p_{\alpha,c},m_{\alpha,c})$ of \eqref{eq:2.8a}-\eqref{eq:2.8c} satisfies \eqref{eq:3.3} and converges to $(0,0,\bar{m})$ as $y\to +\infty$ for some $\bar{m} \in [0,1)$. Then, for all $\bar{m}' \in (\bar{m},1)$, there exists a unique value of the shooting parameter, $\alpha' \in (\alpha,\alpha_1(c))$, for which the solution of \eqref{eq:2.8a}-\eqref{eq:2.8c} that satisfies \eqref{eq:3.3} stays in $\mathcal{D}_{\bar{m}'}$ and converges to $(0,0,\bar{m}')$ as $y\to +\infty$.
\label{rem:3.4}
\end{rem}

Lemma \ref{lem:3.12} implies that, for all $\bar{m} \in (0,1)$, the minimal wave speed, $c^*_\kappa(\bar{m})$, exists and is bounded above by $2$. Then, given $\bar{m} \in [0,1)$, for any $c \geq c_\kappa^*(\bar{m})$, we can define
\begin{equation}
\alpha_{\bar{m}}(c) \coloneqq {\lbrace \alpha \geq \alpha_0(c) \mid m_\infty(\alpha,c) =  \bar{m}\rbrace} \in [\alpha_0(c),\alpha_1(c)).
\label{eq:3.22}
\end{equation}

We now improve the upper bound on $c^*_\kappa(\bar{m})$ for $\bar{m} \in (0,1)$ by formulating a conjecture. We consider the following generalised Fisher-KPP equation with reaction term, $g$, of Fisher-KPP type:
\begin{equation}
\begin{aligned}
  & n'' + cn'+g(n) =0,\\
  \text{with} \quad & \lim_{y\to- \infty} n(y) = 1, \quad \lim_{y\to+\infty} n(y) = 0, \quad \lim_{y\to \pm \infty} n'(y) = 0.
\end{aligned}
\label{eq:3.25}
\end{equation}
One typically seeks TWS such that $n$ is monotonically decreasing, in which case we can invert $n(y)$ to obtain a function $Y(n), \, n\in [0,1]$. Considering the new variable $P(n) \coloneqq n'(Y(n))$, we obtain the following first order boundary value problem (BVP):
\begin{equation}
\begin{cases}
  P' = -c - \frac{g(n)}{P}, \\
  P(0)= 0,
\end{cases}
\label{eq:3.26}
\end{equation}
subject to $P(1) = 0, \, P(n) < 0 \; \forall n \in (0,1)$. Studying TWS of \eqref{eq:3.25} and solutions of \eqref{eq:3.26}, subject to their respective asymptotic and boundary conditions, is equivalent \cite{14}. Moreover, it is known that if $g''(n) < 0 \, \forall n \in [0,1]$, then \eqref{eq:3.26} subject to $P(1) = 0,  \, P(n) < 0 \; \forall n \in (0,1)$ has a unique solution if $c \geq 2\sqrt{g'(0)}$ \cite{43,44,45}. Therefore, TWS of \eqref{eq:3.25} exist and are unique if $c \geq 2\sqrt{g'(0)}$.

Returning to our original problem, by introducing $P(n) \coloneqq n'(Y(n))$ and $M(n) \coloneqq m(Y(n))$, we view the system \eqref{eq:2.8a}-\eqref{eq:2.8c} subject to the conditions \eqref{eq:2.2b} as the following BVP:
\begin{equation}
\begin{cases}
  P' = -c - \frac{(1-n)n(1-M(n))}{P}, \\
  M' = \frac{\kappa}{c}\frac{M(1-M)N}{P}, \\
  P(0)= 0, M(0) = \bar{m},
\end{cases}
\label{eq:3.27}
\end{equation}
subject to the additional conditions 
\begin{equation}
    P(n) < 0, \quad 0 < M(n) < \bar{m} \; \forall n \in (0,1), \quad P(1) = M(1) = 0.
\label{eq:3.27b}
\end{equation}
In Supplementary Material S2, we show that $g(n) = (1-n)n(1-M(n))$ is of Fisher-KPP type for $0 \leq M \leq \bar{m} < 1$ and that $g''(0) < 0$ if $\kappa \leq \kappa^*(\bar{m})$, where
\begin{equation}
    \kappa^*(\bar{m})\coloneqq \frac{(1-\bar{m})}{\bar{m}} \quad \forall \bar{m} \in (0,1).
    \label{eq:3.27c}
\end{equation}
We conjecture that, if $\kappa \leq \kappa^*(\bar{m})$, then $g''(n) < 0 \,\, \forall n \in [0,1]$. By the preceding result for the generalised Fisher-KPP equation, this would imply that, given $0 < \kappa \leq \kappa^*(\bar{m})$, the system \eqref{eq:2.8a}-\eqref{eq:2.8c} subject to the conditions \eqref{eq:2.2b} has unique TWS for $c \geq 2\sqrt{g'(0)}= 2\sqrt{1-\bar{m}}$. Now, given $\kappa > 0$, we let $m^*(\kappa) \coloneqq \frac{1}{\kappa +1}$. Noting that $0 < \kappa \leq \kappa^*(\bar{m})$ if and only if $0 < \bar{m} \leq m^*(\kappa)$, we formulate the following conjecture.

\begin{conj}
Fix $\kappa > 0$ and $\bar{m} \in \left(0, m^*(\kappa) \right]$. Given $c \geq 2 \sqrt{1-\bar{m}}$, there exists a unique $\alpha' \in~ [\alpha_0(c),\alpha_1(c))$ such that the solution $(n_{\alpha',c},p_{\alpha',c},m_{\alpha',c})$ of \eqref{eq:2.8a}-\eqref{eq:2.8c} that satisfies the asymptotic properties in Lemma \ref{lem:3.1} converges to $(0,0,\bar{m})$ as $y \to +\infty$. In particular, if $c = 2 \sqrt{1-\bar{m}}$, then $\alpha'=~\alpha_0(c)$. 
\label{conj:3.14}
\end{conj}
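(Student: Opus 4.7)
The plan is to follow the programme sketched in the paragraph preceding the statement: recast system \eqref{eq:2.8a}-\eqref{eq:2.8c} with condition \eqref{eq:2.2b} as the two-dimensional boundary value problem \eqref{eq:3.27}-\eqref{eq:3.27b} and then reduce to the classical scalar Fisher-KPP setting treated in \cite{43,44,45}. First I would confirm, using the computations in Supplementary Material S2, that along any admissible trajectory the effective reaction term $g(n) := (1-n)n(1-M(n))$ is of Fisher-KPP type on $[0,1]$ with $g'(0) = 1-\bar m$. This places the natural threshold at $2\sqrt{g'(0)} = 2\sqrt{1-\bar m}$, which coincides with the lower bound established in Lemma \ref{lem:3.11}, so the strategy targets exactly the range of wave speeds appearing in the conjecture.

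The heart of the proof, and the main obstacle, is to upgrade the local estimate $g''(0) < 0$ (known to hold whenever $\kappa \leq \kappa^*(\bar m)$) to the global bound $g''(n) < 0$ for every $n \in [0,1]$. Differentiating $g(n) = (n-n^2)(1-M(n))$ twice gives
\[
g''(n) = -2(1-M(n)) - 2(1-2n)M'(n) - (n-n^2)M''(n),
\]
and the BVP yields $M'(n) = \frac{\kappa}{c} M(1-M) n / P(n)$, from which $M''(n)$ can be expressed in terms of $P$, $P' = -c - (1-n)n(1-M)/P$, and $M$ alone. My strategy would be to introduce the auxiliary function $H(n) := -g''(n)$, record the boundary value $H(0) > 0$ (which is precisely the content of $g''(0) < 0$ and uses the hypothesis $\kappa \bar m \leq 1-\bar m$), and then argue via a differential-inequality or maximum-principle-type argument that $H$ cannot vanish on $(0,1]$. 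The signs $P < 0$ and $M' < 0$ on $(0,1)$, together with the strict concavity of the factor $(1-n)n$, should keep $H$ strictly positive under the standing hypothesis $\bar m \leq m^*(\kappa)$; the delicate point is that the expression for $g''$ mixes terms of opposite signs and no direct algebraic factorisation is apparent, which is presumably why the authors present the result as a conjecture rather than a theorem.

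With the global concavity of $g$ in hand, the classical scalar theory yields, for every $c \geq 2\sqrt{1-\bar m}$, a unique monotone $P \in C([0,1]) \cap C^1((0,1))$ satisfying the first equation of \eqref{eq:3.27} together with $P(0) = P(1) = 0$ and $P<0$ on $(0,1)$. Because $g$ itself depends on $M$, this step must be combined with a fixed-point argument: given an admissible profile $M$, solve for the associated $P$ via the Fisher-KPP result, then reconstruct a new $M$ by integrating the second equation of \eqref{eq:3.27} starting from $M(0) = \bar m$, and show that this map has a unique fixed point, exploiting the strict monotonicity in the shooting parameter (Lemma \ref{lem:3.3}) for uniqueness. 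Reversing the desingularisation \eqref{eq:2.5} and matching with the asymptotic expansion \eqref{eq:3.3} then produces the unique $\alpha' \in [\alpha_0(c),\alpha_1(c))$ whose trajectory converges to $(0,0,\bar m)$. Finally, for the borderline speed $c = 2\sqrt{1-\bar m}$ the two stable eigenvalues at $(0,0,\bar m)$ collapse to a double root $-\sqrt{1-\bar m}$, the corresponding Fisher-KPP front is of sharp type, and matching its decay rate with the characterisation of $\alpha_0(c)$ as the marginal shooting parameter for which the trajectory just stays in $\mathcal{D}_1$ identifies $\alpha' = \alpha_0(c)$, completing the proof.
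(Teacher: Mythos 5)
This statement is a \emph{conjecture} in the paper: the authors do not prove it, and their Supplementary Material S2 only supplies the same partial evidence you reproduce, namely that $g(n)=(1-n)n(1-M(n))$ is of Fisher--KPP type with $g'(0)=1-\bar m$, and that $g''(0)<0$ precisely when $\kappa\le\kappa^*(\bar m)$. Your proposal follows that programme faithfully, but it does not close the gap that makes the statement a conjecture. The decisive step --- upgrading $g''(0)<0$ to $g''(n)<0$ for all $n\in[0,1]$ --- is exactly where your argument stops being a proof: you write that a ``differential-inequality or maximum-principle-type argument'' \emph{should} keep the relevant auxiliary function positive, which is the same unproven hypothesis the authors record (in their notation, $H(0)<1$ together with the unverified claim $H'(n)\le 0$ on $[0,1]$). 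Note also a sign slip in your heuristic: on the admissible branch one has $M'(n)=\frac{\kappa}{c}\,M(1-M)n/P>0$ for $n\in(0,1)$ (since $P<0$ and the trajectory is traversed with $n$ decreasing in $y$, $M$ is decreasing along the orbit but increasing as a function of $n$), so the claim ``$M'<0$ on $(0,1)$'' that you invoke to control the sign of $H$ is wrong as stated, and the competing terms in $g''$ do not all point the same way --- which is precisely why no one has made this step work.

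Two further steps are asserted rather than proved. First, the reduction to the scalar theory of \cite{43,44,45} requires $g$ to be a \emph{given} function of $n$, whereas here $M(n)$ is itself part of the unknown; your proposed fixed-point iteration (solve for $P$ given $M$, reconstruct $M$, iterate) is a reasonable idea but you give no contraction or monotone-operator argument, no invariant set for the map, and no reason the fixed point is unique beyond an appeal to Lemma \ref{lem:3.3}, which concerns monotonicity in the shooting parameter $\alpha$ of the $y$-parametrised system, not of the $(P,M)$ boundary value problem. Second, the identification $\alpha'=\alpha_0(c)$ at the critical speed $c=2\sqrt{1-\bar m}$ is only sketched: matching ``decay rates'' of a putative sharp front with the marginal shooting parameter needs an actual argument (for instance, that for $\alpha<\alpha_0(c)$ the trajectory exits through $n=0$ while for $\alpha=\alpha_0(c)$ it limits onto the slow stable direction at $(0,0,\bar m)$), and none is supplied. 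In short, your proposal is a correct restatement of the authors' strategy and correctly locates the obstruction, but it does not prove the statement --- no one has, which is why it appears in the paper as Conjecture \ref{conj:3.14} rather than as a theorem.
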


Conjecture \ref{conj:3.14} implies that, given $\kappa > 0$, there are values of $\bar{m} \in [0,1)$ such that the solutions of \eqref{eq:2.8a}-\eqref{eq:2.8c} that satisfy the asymptotic conditions \eqref{eq:2.2b} behave similarly to solutions of a generalised Fisher-KPP equation with reaction term $g(n)=(1-n)n(1-\bar{m})$. In particular, the minimal wave speed for these TWS is defined similarly to that of a generalised Fisher-KPP equation, i.e. it is the smallest value of $c > 0$ such that $(0,0,\bar{m})$ is a stable node, and not a stable spiral, for system \eqref{eq:2.8a}-\eqref{eq:2.8c}. In addition, using Lemmas \ref{lem:3.11} and \ref{lem:3.12} and Conjecture \ref{conj:3.14}, we make the hypothesis that, if $\bar{m} \in \left(m^*(\kappa),1\right)$ or, equivalently, if $\kappa > \kappa^*(\bar{m})$, then the minimal wave speed for TWS that converge to $(0,0,\bar{m})$ as $y \to +\infty$ should satisfy $c^*_\kappa(\bar{m}) \in \left(2 \sqrt{1-\bar{m}},2\right)$. In other words, in these cases, we expect that there is another mechanism that can lead to $n(y) < 0, \; y\in\mathbb{R}$, even if $(0,0,\bar{m})$ is a stable node for the system \eqref{eq:2.8a}-\eqref{eq:2.8c}.

The preceding hypothesis and Conjecture \ref{conj:3.14} are supported by numerical simulations of the PDE system \eqref{eq:1.3} and ODE system \eqref{eq:2.8a}-\eqref{eq:2.8c}. In Figure \ref{fig:3.5.1}, we show that solutions of system \eqref{eq:1.3} subject to the initial conditions \eqref{eq:1.3b} with $\bar{M}\in[0,1)$ evolve into travelling waves with constant propagation speed (see Supplementary Material S3 for corresponding travelling wave profiles). We observe that, for $0 < \kappa \leq \kappa^*(\bar{M})$, this speed is independent of $\kappa$, and, calculating the slopes of these lines, we find that it is approximately equal to $ 2\sqrt{1-\bar{M}}$. Additionally, when $\kappa > \kappa^*(\bar{M})$, we observe that the wave speed selected by the PDE increases with $\kappa$. We also solved numerically the system \eqref{eq:2.8a}-\eqref{eq:2.8c}, subject to the asymptotic conditions \eqref{eq:3.3}, for the same values of $\kappa > \kappa^*(\bar{M})$ and the respective values of the propagation speed estimated using the solutions of the PDE system (results not shown). We observed that, given $\kappa > \kappa^*(\bar{M})$, the wave speed selected by the PDE appears to correspond to the smallest wave speed such that the solution $(n,p,m)$ of the system \eqref{eq:2.8a}-\eqref{eq:2.8c}, subject to \eqref{eq:3.3}, satisfies $n(y) > 0 \; \forall y \in \mathbb{R}$ and converges to $(0,0,\bar{m})$, $\bar{m}=\bar{M}$.

\begin{figure}[!ht]
\begin{subfigure}[t]{0.5\textwidth}
\centering
\includegraphics[scale=0.18]{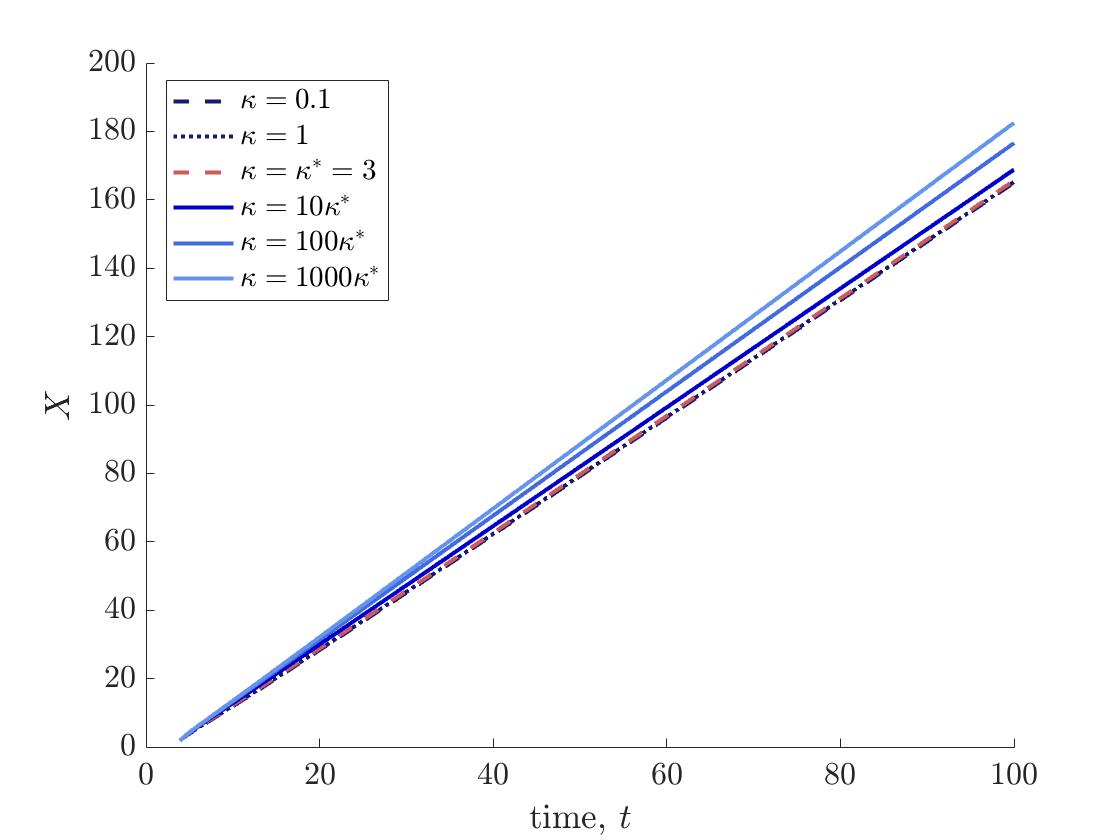} 
\caption{}
\label{fig:3.5.1a}
\end{subfigure}
\hfill
\begin{subfigure}[t]{0.5\textwidth}
\centering
\includegraphics[scale=0.18]{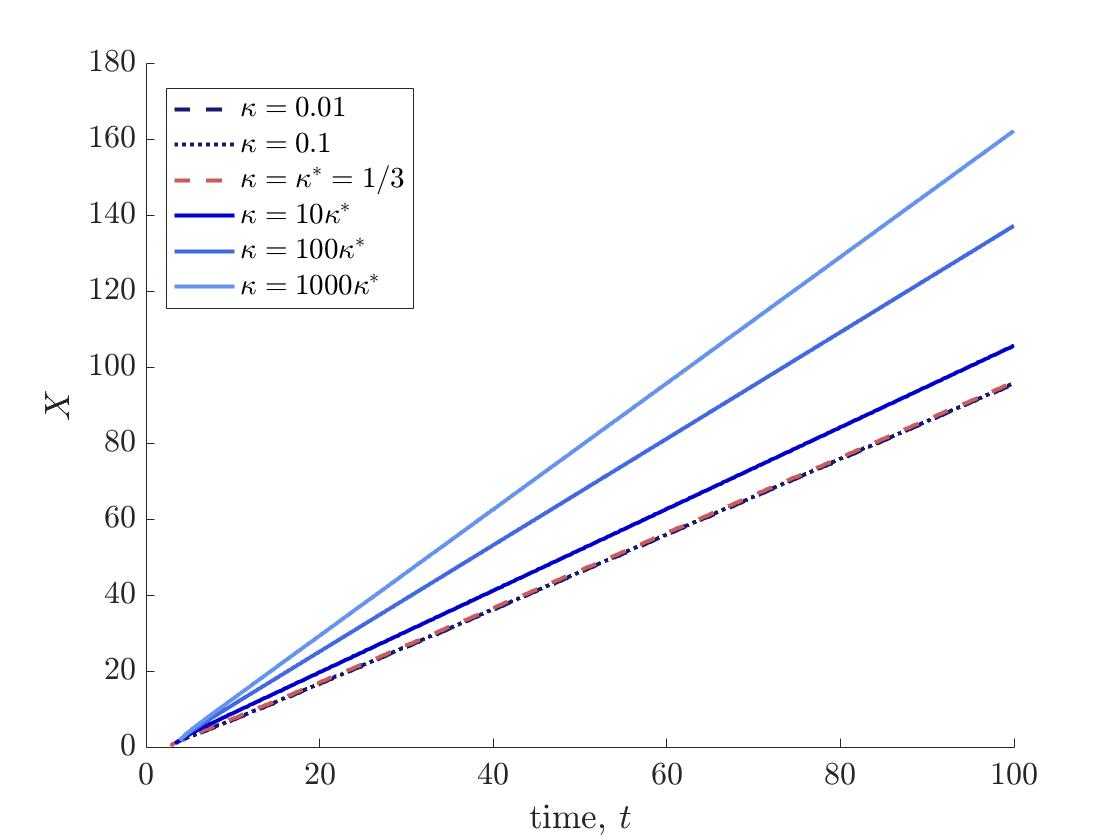}
\caption{}
\label{fig: thank 3.5.1b}
\end{subfigure}
\caption{We numerically solve system \eqref{eq:1.3} on the 1-D spatial domain, $x \in \mathcal{X} \coloneqq [0,200]$, and impose the initial conditions \eqref{eq:1.3b} with $\bar{M} = 0.25$ (a) and $\bar{M} = 0.75$ (b). Each plot represents $X(t)$ such that $N(X(t),t) = 0.5$ for $t \in (0,70]$ when $\kappa < \kappa^*$, with $\kappa^*$ defined by \eqref{eq:3.27c}, and when ${\kappa \in \{\kappa^*,10 \kappa^*, 100 \kappa^*, 1000 \kappa^*\}}$. We see that the front travels with a constant propagation speed that increases monotonically with $\kappa$. }
\label{fig:3.5.1}
\end{figure}

Now, suppose Conjecture \ref{conj:3.14} is true. Then, given $\kappa  >0$, for each $\bar{m} \in [0,m^*(\kappa)]$ and $c \geq 2 \sqrt{1-\bar{m}}$, $\alpha_{\bar{m}}(c)$ as defined by \eqref{eq:3.22} exists and is the unique $\alpha'$ mentioned in the statement of Conjecture~\ref{conj:3.14}. Using Remark \ref{rem:3.4} and Conjecture \ref{conj:3.14}, the subsequent result follows naturally (we omit the proof for brevity).
\begin{lem}
Suppose Conjecture \ref{conj:3.14} is true and fix $\kappa > 0$. If $c \geq 2 \sqrt{1-m^*(\kappa)}$, then, for all ${\bar{m}\in \left(m^*(\kappa), 1\right)}$, there exists a unique $\alpha \in (\alpha_{m^*(\kappa)}(c),\alpha_1(c))$ such that the solution of \eqref{eq:2.8a}-\eqref{eq:2.8c} that satisfies the asymptotic properties in Lemma \ref{lem:3.1} converges to $(0,0,\bar{m})$ as $y\to +\infty$.
\label{lem:3.14}
\end{lem}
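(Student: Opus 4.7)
The plan is to reduce Lemma \ref{lem:3.14} to a direct application of Remark \ref{rem:3.4}, using Conjecture \ref{conj:3.14} at the threshold value $\bar{m} = m^*(\kappa)$ to supply the required base case. Since Remark \ref{rem:3.4} already delivers the one-step continuation from any $\bar{m}$ to any $\bar{m}' \in (\bar{m}, 1)$, essentially no new analysis is required beyond producing one trajectory that targets the endpoint $(0, 0, m^*(\kappa))$.

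First, I would fix $\kappa > 0$ and observe that $m^*(\kappa) = 1/(\kappa+1) \in (0, 1)$, and in particular $m^*(\kappa) \in (0, m^*(\kappa)]$, so Conjecture \ref{conj:3.14} applies at $\bar{m} = m^*(\kappa)$. For the prescribed wave speed $c \geq 2\sqrt{1 - m^*(\kappa)}$, the conjecture produces a unique $\alpha' \in [\alpha_0(c), \alpha_1(c))$ such that the corresponding solution of \eqref{eq:2.8a}-\eqref{eq:2.8c} satisfying \eqref{eq:3.3} stays in $\mathcal{D}_1$ and converges to $(0, 0, m^*(\kappa))$ as $y \to +\infty$. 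By the definition \eqref{eq:3.22}, this $\alpha'$ is precisely $\alpha_{m^*(\kappa)}(c)$, and it lies strictly below $\alpha_1(c)$.

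Next, I would fix an arbitrary $\bar{m} \in (m^*(\kappa), 1)$ and apply Remark \ref{rem:3.4} with base value $m^*(\kappa)$ and target $\bar{m}$. The remark immediately yields a unique $\alpha \in (\alpha_{m^*(\kappa)}(c), \alpha_1(c))$ for which the associated solution stays in $\mathcal{D}_{\bar{m}}$ and converges to $(0, 0, \bar{m})$ as $y \to +\infty$. Since $\bar{m} \in (m^*(\kappa), 1)$ was arbitrary, this establishes the claim.

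I do not anticipate any serious obstacle within the argument itself: the substance is entirely carried by the two quoted results, and the continuation interval $(\alpha_{m^*(\kappa)}(c), \alpha_1(c))$ is non-empty by the strict inequality $\alpha_{m^*(\kappa)}(c) < \alpha_1(c)$ guaranteed by Conjecture \ref{conj:3.14}. The one fragile point is that the entire argument is conditional on Conjecture \ref{conj:3.14}; were the conjecture to fail at the endpoint $\bar{m} = m^*(\kappa)$, the base case of the continuation would need to be supplied by an independent argument, and one might also have to re-examine whether the lower bound $2\sqrt{1 - m^*(\kappa)}$ on $c$ remains the correct threshold.
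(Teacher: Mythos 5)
Your proposal is correct and matches the paper's intended argument: the paper explicitly omits the proof, stating only that the result "follows naturally" from Remark \ref{rem:3.4} and Conjecture \ref{conj:3.14}, which is precisely the two-step reduction you carry out (Conjecture \ref{conj:3.14} at $\bar{m}=m^*(\kappa)$ supplies the base trajectory with shooting parameter $\alpha_{m^*(\kappa)}(c)<\alpha_1(c)$, and Remark \ref{rem:3.4} continues it to every $\bar{m}\in(m^*(\kappa),1)$). No gaps.
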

This lemma allows us to obtain a sharper upper bound on the minimal wave speed for solutions of \eqref{eq:2.8a}-\eqref{eq:2.8c} subject to \eqref{eq:3.3} that converge to $(0,0,\bar{m})$ as $y \to +\infty$, where ${\bar{m} \in (m^*(\kappa),1)}$. We now summarise what we can conclude about the minimal wave speed $c^*_\kappa(\bar{m})$.

\begin{lem}
Suppose Conjecture \ref{conj:3.14} is true. Given $\kappa >0$, the minimal wave speed $c^*_\kappa(\bar{m})$ is a monotonically decreasing function on $[0,1)$, such that
\begin{equation}
\displaystyle c^*_\kappa(\bar{m})  \begin{cases}
    = 2 \sqrt{1-\bar{m}} \qquad \qquad \qquad \qquad \,\, \, \text{if } \bar{m} \in \left[0, m^*(\kappa)\right], \\
    \in \left[2 \sqrt{1-\bar{m}}, 2\sqrt{1-m^*(\kappa)}\right] \quad \text{if } \bar{m} \in \left(m^*(\kappa), 1\right). 
  \end{cases}
\label{eq:3.30}
\end{equation}
\label{lem:3.15}
\end{lem}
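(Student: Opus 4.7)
My plan is to assemble the lemma from four previously established ingredients: the non-existence lower bound of Lemma \ref{lem:3.11}, the conjectural sharp existence statement of Conjecture \ref{conj:3.14}, the enclosure upper bound of Lemma \ref{lem:3.14}, and the continuation principle of Remark \ref{rem:3.4}. No new dynamical analysis is required: the proof is a bookkeeping argument over the two sub-intervals $[0, m^*(\kappa)]$ and $(m^*(\kappa), 1)$, followed by a monotonicity argument that works uniformly across both.

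First, I would handle the pointwise values. The endpoint $\bar{m}=0$ reduces via Remark \ref{rem:2.1} to the classical Fisher--KPP equation, which gives $c^*_\kappa(0) = 2 = 2\sqrt{1-0}$. For $\bar{m} \in (0, m^*(\kappa)]$, Lemma \ref{lem:3.11} yields the lower bound $c^*_\kappa(\bar{m}) \geq 2\sqrt{1-\bar{m}}$, while Conjecture \ref{conj:3.14} supplies the matching upper bound by producing, for every $c \geq 2\sqrt{1-\bar{m}}$, a unique admissible shooting parameter $\alpha'$ whose trajectory converges to $(0,0,\bar{m})$; the equality $c^*_\kappa(\bar{m}) = 2\sqrt{1-\bar{m}}$ then follows by definition of the infimum. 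For $\bar{m} \in (m^*(\kappa), 1)$, Lemma \ref{lem:3.11} again furnishes the lower bound $2\sqrt{1-\bar{m}}$, and Lemma \ref{lem:3.14} furnishes the upper bound $2\sqrt{1-m^*(\kappa)}$ by exhibiting TWS converging to $(0,0,\bar{m})$ for every $c \geq 2\sqrt{1-m^*(\kappa)}$.

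For the monotonicity statement, I would fix $0 \leq \bar{m}_1 < \bar{m}_2 < 1$ and an arbitrary wave speed $c \geq c^*_\kappa(\bar{m}_1)$. By the existence parts above, there is $\alpha \in [\alpha_0(c), \alpha_1(c))$ whose solution of \eqref{eq:2.8a}--\eqref{eq:2.8c} converges to $(0,0,\bar{m}_1)$. Invoking Remark \ref{rem:3.4} at this same speed $c$ yields $\alpha' \in (\alpha, \alpha_1(c))$ whose solution stays in $\mathcal{D}_{\bar{m}_2}$ and converges to $(0,0,\bar{m}_2)$, so $c$ is admissible for $\bar{m}_2$ and hence $c \geq c^*_\kappa(\bar{m}_2)$. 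Passing to the infimum in $c$ gives $c^*_\kappa(\bar{m}_2) \leq c^*_\kappa(\bar{m}_1)$, which is the required monotonicity (strict decrease being automatic on $[0, m^*(\kappa)]$ from the closed-form expression).

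The only piece of care needed is to apply Remark \ref{rem:3.4} at the critical speed itself when $\bar{m}_1 \in [0, m^*(\kappa)]$, where $c^*_\kappa(\bar{m}_1) = 2\sqrt{1-\bar{m}_1}$: this is precisely what the second sentence of Conjecture \ref{conj:3.14} guarantees, namely the existence of a TWS at the endpoint with $\alpha' = \alpha_0(c)$. Beyond that boundary bookkeeping, I do not anticipate a substantive analytical obstacle, since all of the heavy lifting (the Sturm comparison for non-existence, the centre-manifold analysis, and the intermediate-value/continuity arguments on $\alpha \mapsto m_\infty(\alpha,c)$) has already been done in Lemmas \ref{lem:3.11}--\ref{lem:3.14}.
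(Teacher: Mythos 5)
Your proposal is correct and follows essentially the same route as the paper: the pointwise value on $[0,m^*(\kappa)]$ comes from combining the Sturm-comparison lower bound of Lemma \ref{lem:3.11} with Conjecture \ref{conj:3.14}, and monotonicity comes from Remark \ref{rem:3.4} (you argue it directly, the paper by contradiction, which is the same argument in contrapositive form). The only cosmetic difference is that you cite Lemma \ref{lem:3.14} for the upper bound on $(m^*(\kappa),1)$ whereas the paper deduces it from the already-established monotonicity together with the value $c^*_\kappa(m^*(\kappa))=2\sqrt{1-m^*(\kappa)}$; these are interchangeable since Lemma \ref{lem:3.14} is itself a consequence of Remark \ref{rem:3.4} and the conjecture.
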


\begin{proof}
Fix $\kappa >0$. Suppose, for a contradiction, that $c^*_\kappa$ is not a monotonically decreasing function of $\bar{m}$ on $[0,1)$. Then, we can find $0 \leq \bar{m}' < \bar{m}'' < 1$ such that $c^*_\kappa(\bar{m}') < c^*_\kappa(\bar{m}'')$. Now, choose $c \in (c^*_\kappa(\bar{m}'),c^*_\kappa(\bar{m}''))$. Then, there exists a solution of \eqref{eq:2.8a}-\eqref{eq:2.8c} that satisfies the asymptotic conditions \eqref{eq:3.3}, stays in region $\mathcal{D}_{\bar{m}'}$ and converges to $(0,0,\bar{m}')$ as $y\to +\infty$, but there does not exist a solution of \eqref{eq:2.8a}-\eqref{eq:2.8c} that satisfies the asymptotic conditions \eqref{eq:3.3}, stays in region $\mathcal{D}_{\bar{m}''}$ and converges to $(0,0,\bar{m}'')$ as $y\to +\infty$. As $\bar{m}'' \in (\bar{m}',1)$, Remark \ref{rem:3.4} gives us a contradiction, hence $c^*_\kappa$ is a decreasing function of $\bar{m}$ on $[0,1)$.

From Lemma \ref{lem:3.11} and Conjecture \ref{conj:3.14}, we know that the minimal wave speed for all ${\bar{m} \in \left[0, m^*(\kappa)\right]}$ is $c^*_\kappa(\bar{m})  = 2 \sqrt{1-\bar{m}}$. Since $c^*_\kappa$ is a decreasing function of $\bar{m}$ on $[0,1)$, we must have $c^*_\kappa(\bar{m}) \leq 2\sqrt{1-m^*(\kappa)}$ for any $\bar{m} \in \left( m^*(\kappa),1 \right)$. Finally, by Lemma \ref{lem:3.11}, we know that ${c^*_\kappa(\bar{m}) \geq 2\sqrt{1-\bar{m}}}$ for any $\bar{m} \in \left( m^*(\kappa),1 \right)$. This completes the proof of Lemma \ref{lem:3.15}.
\end{proof}

While we do not have a complete characterisation of the minimal wave speed for all $\kappa > 0$ and $\bar{m} \in (0,1)$, we can now state our second main result. Its proof is similar to that of Theorem \ref{thm:3.10} and is contained in Supplementary Material S1. 

\begin{thm}
Suppose Conjecture \ref{conj:3.14} is true. Given $\kappa > 0$, for any $\bar{\mathcal{M}}\in [0,1)$, there exists a minimal wave speed $c^*_\kappa(\bar{\mathcal{M}})$ defined by \eqref{eq:3.30} such that: 
\begin{enumerate}
    \item For $0 < c < c^*_\kappa(\bar{\mathcal{M}})$, system $\eqref{eq:1.3}$ has no weak TWS $(\mathcal{N},\mathcal{M};c)$ connecting $(1,0)$ and $(0,\bar{\mathcal{M}})$ .
    \item For $ c \geq  c^*_\kappa(\bar{\mathcal{M}})$, system $\eqref{eq:1.3}$ has a weak TWS $(\mathcal{N},\mathcal{M};c)$ connecting $(1,0)$ and $(0,\bar{\mathcal{M}})$. Moreover, this solution is unique (up to translation) and $\mathcal{N},\mathcal{M}$ are monotonically strictly decreasing and increasing functions of $\xi = x - ct$, respectively.
\end{enumerate}
\label{thm:3.16}
\end{thm}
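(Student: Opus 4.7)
The plan is to transfer the existence and uniqueness statements from the ODE system \eqref{eq:2.8a}-\eqref{eq:2.8c} to the PDE system \eqref{eq:1.3} by inverting the change of variables \eqref{eq:2.5}-\eqref{eq:2.6}, essentially mirroring the proof of Theorem \ref{thm:3.10} but with $\alpha_1(c)$ replaced by the threshold $\alpha_{\bar{\mathcal{M}}}(c)$ defined in \eqref{eq:3.22}. The degenerate endpoint $\bar{\mathcal{M}}=0$ is covered directly by Remark \ref{rem:2.1}, which reduces the system to the Fisher-KPP equation with minimal wave speed $c^*_\kappa(0)=2$, in agreement with \eqref{eq:3.30}. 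I would therefore focus on $\bar{\mathcal{M}} \in (0,1)$.

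For part 1, since \eqref{eq:2.5}-\eqref{eq:2.6} is an orientation-preserving homeomorphism on $(0,1)^2$, the existence of a weak TWS connecting $(1,0)$ and $(0,\bar{\mathcal{M}})$ is equivalent to the existence of some value $\alpha > 0$ of the shooting parameter for which the solution $(n_{\alpha,c},p_{\alpha,c},m_{\alpha,c})$ of \eqref{eq:2.8a}-\eqref{eq:2.8c} stays in $\mathcal{D}_{\bar{\mathcal{M}}}$ for all $y \in \mathbb{R}$ and converges to $(0,0,\bar{\mathcal{M}})$ as $y \to +\infty$. By Lemma \ref{lem:3.11}, no such $\alpha$ exists when $0 < c < 2\sqrt{1-\bar{\mathcal{M}}}$; on the remaining interval $[2\sqrt{1-\bar{\mathcal{M}}}, c^*_\kappa(\bar{\mathcal{M}}))$, which is non-empty only when $\bar{\mathcal{M}} \in (m^*(\kappa),1)$, non-existence follows directly from the definition of $c^*_\kappa(\bar{\mathcal{M}})$ as the minimal wave speed recorded in Lemma \ref{lem:3.15}.

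For part 2, fix $c \geq c^*_\kappa(\bar{\mathcal{M}})$. Combining Lemma \ref{lem:3.12}, Conjecture \ref{conj:3.14}, Lemma \ref{lem:3.14} and Remark \ref{rem:3.4} supplies a unique $\alpha = \alpha_{\bar{\mathcal{M}}}(c) \in [\alpha_0(c),\alpha_1(c))$ for which $(n_{\alpha,c},p_{\alpha,c},m_{\alpha,c})$ is globally defined on $\mathbb{R}$, remains in $\mathcal{D}_{\bar{\mathcal{M}}}$, and converges to $(0,0,\bar{\mathcal{M}})$ as $y \to +\infty$. Because $m(y) \leq \bar{\mathcal{M}} < 1$ along this orbit, $\mathrm{d}\xi/\mathrm{d}y = 1-m(y) \geq 1-\bar{\mathcal{M}} > 0$, so $\Phi^{-1}$ is a diffeomorphism from $\mathbb{R}$ onto $\mathbb{R}$, and the substitution $(\mathcal{N}(\xi),\mathcal{M}(\xi)) = (n(\Phi(\xi)), m(\Phi(\xi)))$ produces a classical (hence weak) solution of \eqref{eq:2.1a}-\eqref{eq:2.1b} on all of $\mathbb{R}$ satisfying \eqref{eq:2.2b}. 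Strict monotonicity of $\mathcal{N}$ and $\mathcal{M}$ then follows from $\mathrm{d}n/\mathrm{d}y = p < 0$ and $\mathrm{d}m/\mathrm{d}y = (\kappa/c)\,m(1-m)\,n > 0$ on $\mathcal{D}_{\bar{\mathcal{M}}}$, transported to $\xi$ by the orientation-preserving diffeomorphism $\Phi^{-1}$; uniqueness up to translation follows from the uniqueness of $\alpha_{\bar{\mathcal{M}}}(c)$ and the translation invariance of the autonomous system \eqref{eq:2.8a}-\eqref{eq:2.8c}.

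The main subtlety, which I expect to be the focal technical point, is establishing that (unlike in Theorem \ref{thm:3.10}, where orbits approach the singular manifold $m=1$ and a genuinely sharp profile with $\bar{\xi} < +\infty$ can occur) the bound $m \leq \bar{\mathcal{M}} < 1$ here forces the smooth, front-type case $\bar{\xi}=+\infty$: indeed, $\xi(y)-\xi(0) = \int_{0}^{y} (1-m(s))\,\mathrm{d}s \geq (1-\bar{\mathcal{M}})\, y$ drives $\xi(y) \to +\infty$ as $y \to +\infty$. This rules out the finite-$\bar{\xi}$ branch permitted by Lemma \ref{lem:2.1}, so no weak extension past a finite $\bar{\xi}$ is needed and the verification of Definition \ref{def:2.1} is automatic from the classical ODE solution.
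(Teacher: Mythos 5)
Your proposal is correct and follows essentially the same route as the paper's proof: transfer the unique shooting solution $(n_{\alpha_{\bar{m}}(c),c},p_{\alpha_{\bar{m}}(c),c},m_{\alpha_{\bar{m}}(c),c})$ back through the inverse of the desingularising change of variables to build the profile (with monotonicity and uniqueness inherited from the ODE orbit), and obtain non-existence below $c^*_\kappa(\bar{\mathcal{M}})$ by mapping a putative weak TWS to an ODE orbit that would contradict Lemma \ref{lem:3.11} and the definition of the minimal wave speed. The only difference is minor: where you rule out the sharp-front branch via the uniform bound $\mathrm{d}\xi/\mathrm{d}y = 1-m \geq 1-\bar{\mathcal{M}} > 0$, the paper instead computes the full asymptotic expansion of the orbit on the stable manifold of $(0,0,\bar{m})$ to get $\xi(y) \sim (1-\bar{m})\,y \to +\infty$, which reaches the same conclusion while additionally supplying the explicit decay rates of the profile.
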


\section{Numerical solutions of the PDE model} \label{sec:4}
In this section, we present some numerical solutions of the PDE model \eqref{eq:1.3}, which complement our travelling-wave analysis. We solve \eqref{eq:1.3} on the 1-D spatial domain $\mathcal{X} \coloneqq [0,L]$, where $L > 0$. Similarly to \cite{46}, we assume that the tumour has already spread to a position $x=\sigma < L$ in the tissue and we impose initial conditions that satisfy, for $\bar{M} \in [0,1]$:
\begin{equation}
    \begin{cases}
      N(x,0) = 1, \; M(x,0) = 0, \qquad \qquad \qquad \qquad \qquad \quad \;\;\;\, \qquad \qquad \qquad \text{if} \; 0 \leq x < \sigma-\omega, \\
      N(x,0) =  \exp{\left( 1 - \frac{1}{1 - \left(\frac{x-\sigma + \omega}{\omega}\right)^2}\right)},  \;  M(x,0) = \bar{M}\left( 1- N(x,0) \right), \quad \text{if} \; \sigma-\omega \leq x < \sigma, \\
      N(x,0) = 0, \;  M(x,0) = \bar{M}, \qquad \qquad \qquad \qquad \qquad \qquad \qquad \qquad \quad \; \text{if} \; \sigma \leq x \leq L.
    \end{cases}
    \label{eq:1.3b}
\end{equation}
Here, $0 < \omega < \sigma$ represents how sharp the initial boundary between the tumour and healthy tissue is. We complete the mathematical problem by imposing zero-flux boundary conditions for $N$ at $x=0$ and $x=L$. We set $L = 200$, $\sigma =0.2$ and $\omega =0.1$ for our simulations. 

\subsection{Elucidating the wave speed that emerges in the PDE model} \label{sec:4.a}
A characteristic feature of the well-studied Fisher-KPP model is that any non-negative initial condition with compact support will evolve towards a travelling front with speed equal to the minimal wave speed, $c = 2$ \cite{21,30,31}. One may, therefore, question whether this result extends to more complex R-D systems that exhibit travelling waves. For our model, the results from Section~\ref{sec:3}\ref{sec:3.5} suggest that this does hold for solutions of \eqref{eq:1.3} subject to the initial conditions \eqref{eq:1.3b} for $\bar{M} \in [0,1)$. In contrast, the results from Section \ref{sec:3}\ref{sec:3.4} show that there is no strictly positive minimal wave speed for TWS of \eqref{eq:1.3} that satisfy the asymptotic conditions \eqref{eq:2.2a}. Yet, the solution of \eqref{eq:1.3} subject to the initial conditions \eqref{eq:1.3b} for $\bar{M} =1$ appears to evolve towards a travelling front with a strictly positive speed, as illustrated in Figure \ref{fig:4.1.1a} for different values of $\kappa$ (see Supplementary Material S3 for a travelling wave profile). In this way, the solutions of the PDE system preferentially select a wave speed in a way that the corresponding ODE system does not.

\begin{figure}[!ht]
\begin{subfigure}[t]{0.5\textwidth}
\centering
\includegraphics[scale=0.18]{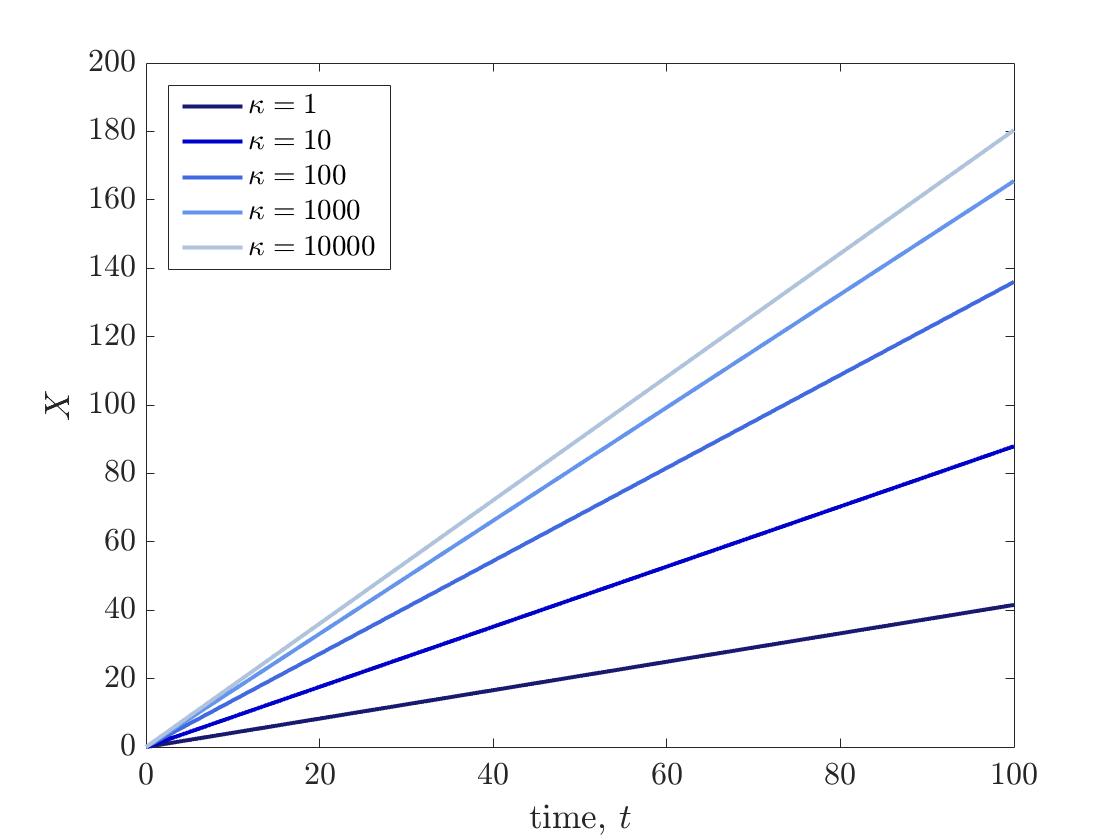} 
\caption{}
\label{fig:4.1.1a}
\end{subfigure}
\hfill
\begin{subfigure}[t]{0.5\textwidth}
\centering
\includegraphics[scale=0.17]{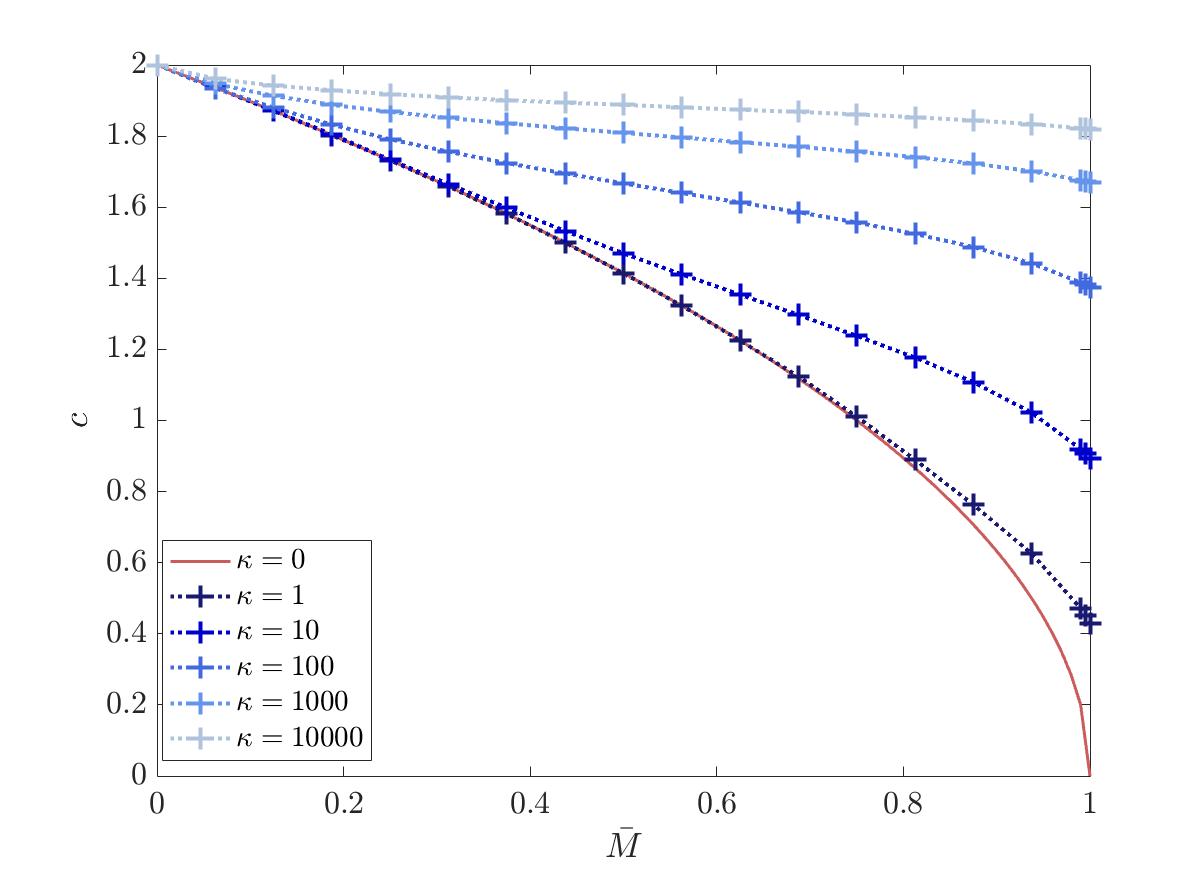} 
\caption{}
\label{fig:4.1.1b}
\end{subfigure}
\caption{We solve system \eqref{eq:1.3} on the 1-D spatial domain, $ x \in \mathcal{X} \coloneqq [0,200]$, and impose the initial condition \eqref{eq:1.3b} for $\bar{M}\in[0,1]$. In (a), we plot $X(t)$ such that $N(X(t),t) = 0.5$ for $t \in (0,100]$ in the cases where $\bar{M}=1$ and $\kappa \in \{1,10, 100, 1000, 10000 \}$, and we see that the front travels with a strictly positive, constant propagation speed that increases monotonically with $\kappa$. In (b), we plot the speed of the travelling front that emerges for $\bar{M} \in \{0.0625j | j \in \llbracket 1, 15 \rrbracket\} \cup \{0.99, 1\}$ and observe that this speed is monotonically decreasing with $\bar{M}$, given $\kappa >0$. }
\label{fig:4.1.1}
\end{figure}

Given different values of $\kappa > 0$, we calculated the speed of travelling fronts that emerge for solutions of \eqref{eq:1.3} subject to the initial conditions \eqref{eq:1.3b} with $\bar{M}\in[0,1]$. Our numerical simulations suggest that the wave speed selected by the PDE model is a continuous, decreasing function of $\bar{M}\in [0,1]$, as illustrated in Figure \ref{fig:4.1.1b}, which represents this wave speed as a function of $\bar{M}\in [0,1]$ for $\kappa \in \{0, 1, 10, 100, 1000, 10000\}$. This is consistent with Lemma $\ref{lem:3.15}$ and our observation that the speed of travelling fronts that emerge for solutions of \eqref{eq:1.3}, subject to the initial conditions \eqref{eq:1.3b} with $\bar{M}\in[0,1)$, appears to be equal to the minimal wave speed, $c^*_\kappa(\bar{M})$, defined by \eqref{eq:3.30}. This result is interesting because the speed selected by the PDE model appears to be left-continuous at $\bar{M}=1$, despite the fact that the minimal wave speed for the existence of TWS is not.

\subsection{Comparing trajectories of the PDE and ODE models}
From Theorems \ref{thm:3.10} and \ref{thm:3.16}, we know that system \eqref{eq:1.3} has TWS connecting $(1,0)$ and $(0,\bar{M})$, $\bar{M} \in [0,1]$, for all $c>0$ if $\bar{M}=1$ and for all $c \geq c_\kappa^*(\bar{M})$, defined by \eqref{eq:3.30}, otherwise. Furthermore, we saw that solutions of \eqref{eq:1.3} subject to the initial conditions \eqref{eq:1.3b} for $\bar{M}\in[0,1]$ evolve towards travelling waves and, in particular, the wave speed is approximately equal to $c_\kappa^*(\bar{M})$ for ${\bar{M}\in [0,1)}$.  We should therefore be able to find agreement between the wave profile of the solutions of the PDE system \eqref{eq:1.3}, subject to the initial conditions \eqref{eq:1.3b} for $\bar{M}\in[0,1]$, and of the ODE system \eqref{eq:2.1a}-\eqref{eq:2.1b}, subject to the asymptotic conditions \eqref{eq:2.2a}, if $\bar{M}=1$, and \eqref{eq:2.2b} otherwise, where we set $c$ to be the wave speed selected by the numerical solution of the PDE system to numerically solve the ODE system. We find good agreement between the wave profiles of the PDE and ODE solutions, and a couple of illustrative examples are shown in Figure \ref{fig:4.2.3}.

\begin{figure}[!ht]
\begin{subfigure}[t]{0.5\textwidth}
\centering
\includegraphics[scale=0.18]{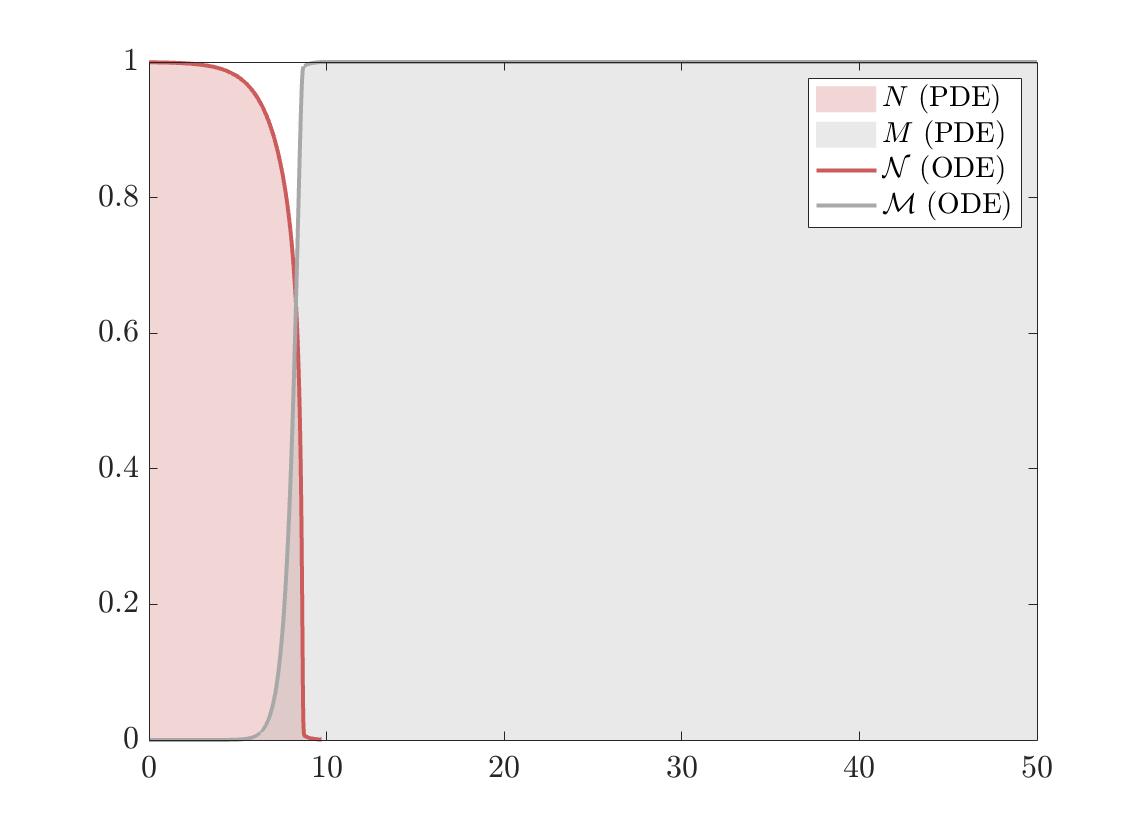} 
\caption{}
\label{fig:4.2.3a}
\end{subfigure}
\hfill
\begin{subfigure}[t]{0.5\textwidth}
\centering
\includegraphics[scale=0.18]{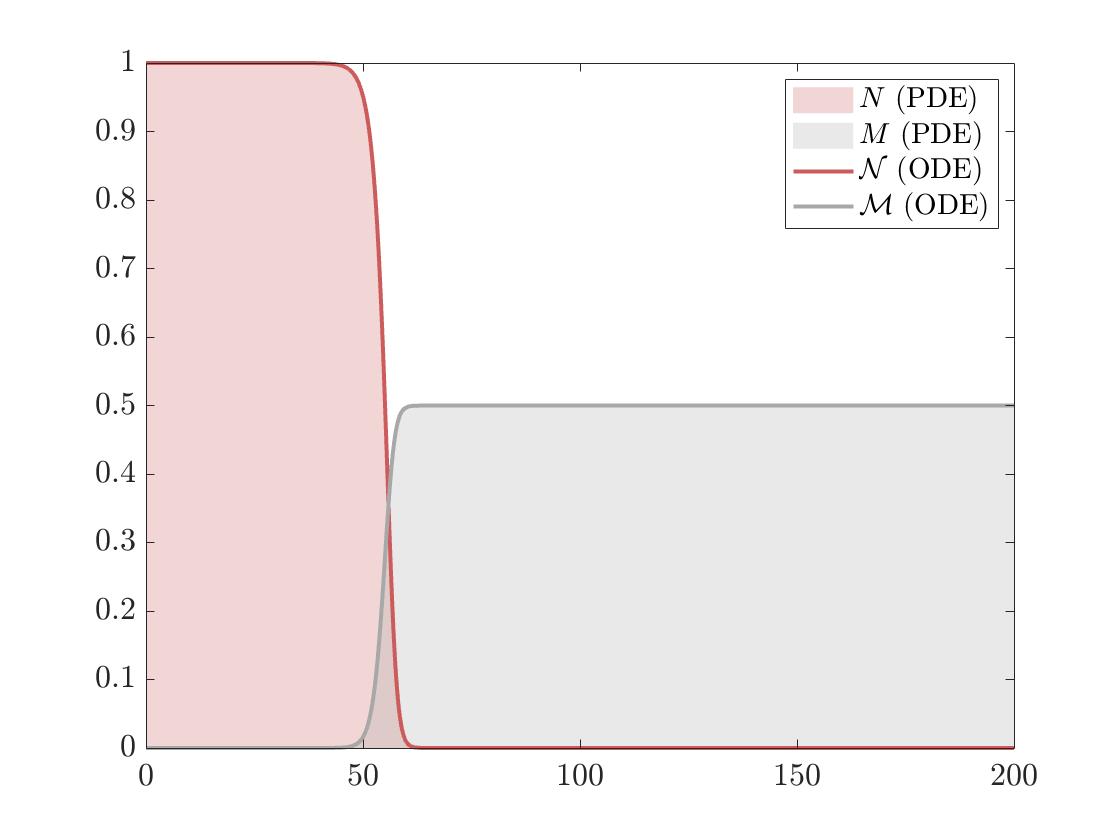} 
\caption{}
\label{fig:4.2.3b}
\end{subfigure}
\caption{We compare solutions of \eqref{eq:1.3}, subject to the initial conditions \eqref{eq:1.3b} with $\bar{M}= 1$ (a) and $\bar{M}=0.5$ (b), to solutions of \eqref{eq:2.1a}-\eqref{eq:2.1b}, subject to the asymptotic conditions \eqref{eq:2.2a} (a) and \eqref{eq:2.2b} with $\bar{\mathcal{M}}=0.5$ (b). We use the wave speed estimated from the numerical solution of the PDE model to solve the ODE model and set $\kappa =1$. Solutions of the PDE and ODE models agree in both cases.}
\label{fig:4.2.3}
\end{figure}

\section{Discussion and perspectives}
Understanding the process of tumour invasion is at the forefront of cancer research. The seminal model of acid-mediated tumour invasion developed by Gatenby and Gawlinski \cite{11} generated new biological insights and formed the basis for subsequent mathematical work on this topic. Due to the model's complexity, most existing results in the literature on the existence of TWS of the model stem from numerical investigations, which are complemented by partial analytical results. In particular, obtaining a complete understanding of the existence of TWS has proven difficult and this has prompted the derivation of simpler models \cite{1,20}. In this paper, we carried out a travelling-wave analysis for the simplified model \eqref{eq:1.3}.

We found that system \eqref{eq:1.3} can support a continuum of TWS, which represent the invasion of healthy tissue, comprised only of ECM, by tumour cells and differ according to the density of ECM far ahead of the wave front. More specifically, we characterised TWS connecting the two spatially homogeneous steady states $(1,0)$ and $(0,\bar{M})$, for $\bar{M} \in [0,1]$. Due to the degeneracy in the first equation of \eqref{eq:1.3} for $M=1$, we distinguished the cases where $\bar{M}=1$ and where $\bar{M }\in[0,1)$. 

In the first case, we proved the existence of TWS for any positive wave speed, $c>0$. This result is particularly interesting as it differs from previous results for degenerate diffusion in a scalar or multi-equation setting, where TWS exist if and only if the wave speed is greater than or equal to a strictly positive minimal wave speed \cite{14,26,27}. It is important to note that this does not imply that a positive wave speed which is preferentially selected does not exist for solutions of $\eqref{eq:1.3}$ that connect $(1,0)$ and $(0,1)$. In fact, we saw in Section \ref{sec:4}\ref{sec:4.a} that a strictly positive, $\kappa$-dependent wave speed appears to be selected by \eqref{eq:1.3} subject to the initial conditions \eqref{eq:1.3b} with $\bar{M}=1$. It would, therefore, be interesting to study the stability of the TWS defined by Theorem \ref{thm:3.10}. We may gain insight on the minimal wave speed for solutions of \eqref{eq:1.3} that connect $(1,0)$ and $(0,1)$ by determining parameter regimes in which solutions are unstable.

In the second case, we proved that TWS exist if and only if the wave speed is greater than or equal to a strictly positive minimal wave speed, $c^*_\kappa(\bar{\mathcal{M}})$, defined by \eqref{eq:3.30} for $\bar{\mathcal{M}}\in[0,1)$. Given $\kappa > 0$, this minimal speed appears to be a monotonically decreasing, continuous function of $\bar{\mathcal{M}}$. In particular, we conjectured that, given $\kappa > 0$ and $m^*(\kappa) \coloneqq \frac{1}{\kappa+1}$, we can precisely define $c^*_\kappa(\bar{\mathcal{M}})= 2\sqrt{1-\bar{\mathcal{M}}}$ for $\bar{\mathcal{M}}\in [0,m^*(\kappa)]$. Similarly to the generalised Fisher-KPP equation, this minimal wave speed is the smallest $c>0$ such that the equilibrium $(0,0,\bar{m})$, with $\bar{m}=\bar{\mathcal{M}}$, of system \eqref{eq:2.8a}-\eqref{eq:2.8c} is a stable node and not a stable spiral. For $\bar{\mathcal{M}}\in (m^*(\kappa),1)$, numerical simulations suggested that the wave speed selected by the PDE is strictly greater than $2\sqrt{1-\bar{\mathcal{M}}}$, which is consistent with \eqref{eq:3.30}. The fact that the equilibrium $(0,0,\bar{m})$ of system \eqref{eq:2.8a}-\eqref{eq:2.8c} is a stable node is then no longer a sufficient condition to ensure the positivity of the $n$-component of the TWS in the desingularised variables and thus of the $\mathcal{N}$-component of the TWS in the original variables. This reflects the differences that can be in observed in systems of equations compared to scalar equations, which can be attributed to the higher dimensionality of the problem. 

Our results regarding the dependence of the minimal wave speed on the model parameters $\kappa$ and $\bar{\mathcal{M}}$ for TWS of \eqref{eq:1.3} connecting $(1,0)$ and $(0,\bar{\mathcal{M}})$, $\bar{\mathcal{M}}\in[0,1)$ rely on a conjecture. Our aim is to rigorously prove this result in future work. In addition, we do not have an expression for the minimal wave speed if $\bar{\mathcal{M}} \in (m^*(\kappa),1)$. Yet, as $\kappa \to +\infty$,  $m^*(\kappa)\to 0$, and it is clear that, as $\kappa$ increases, we can precisely describe the minimal wave speed for a decreasing range of values of $\bar{\mathcal{M}}\in[0,1)$. We would therefore like to provide a complete characterisation of $c^*_\kappa(\bar{\mathcal{M}})$ for all $\kappa > 0$ and $\bar{\mathcal{M}}\in(m^*(\kappa),1)$. Now, we observed in Section \ref{sec:4}\ref{sec:4.a} that the solution of system $\eqref{eq:1.3}$ subject to initial conditions \eqref{eq:1.3b} with $\bar{M}\in[0,1]$ evolves towards a travelling front with a $\kappa$- and $\bar{M}$-dependent wave speed. Importantly, given $\kappa > 0$, it appears that this numerical wave speed is a continuous function of $\bar{M}$ in $[0,1]$, is equal to $c^*_\kappa(\bar{M}) = 2 \sqrt{1-\bar{M}}$ for all $\bar{M}\in[0,m^*(\kappa)]$ and is strictly greater than $2 \sqrt{1-\bar{M}}$ for all $\bar{M}\in (m^*(\kappa),1]$. We note that we have included $\bar{M}=1$ in our preceding observations, which highlights our hypothesis that elucidating the minimal wave speed for $\eqref{eq:1.3}$ in the case $\bar{\mathcal{M}}=1$ could perhaps help us elucidate the minimal wave speed for $\eqref{eq:1.3}$ in the case $\bar{\mathcal{M}}\in(m^*(\kappa),1)$, or vice versa. It is, therefore, important to also study the stability of the travelling waves defined by Theorem \ref{thm:3.16}.

Finally, while it is of mathematical interest to obtain a comprehensive description of the minimal wave speed for all TWS of $\eqref{eq:1.3}$, it is also of biological interest. Our results indicate that the minimal wave speed is highly dependent on the value of $\kappa$, which is the rescaled ECM degradation rate. Since this parameter represents, in a sense, the aggressivity of the tumour cell population towards the ECM, it is significant from an oncological perspective. Hence, our results have the long-term potential of revealing promising targets for therapeutic intervention. 

\pagebreak

\appendix
\numberwithin{figure}{section}
\renewcommand{\thesection}{S}
\renewcommand{\thesubsection}{S\arabic{subsection}}
\renewcommand{\thesubsubsection}{\thesubsection.{\arabic{subsubsection}}}
\section{Supplementary Material}
In the following, we provide supplementary material to the main paper. In Section S1, we provide the proofs of results from Sections 2 and 3 that were not included in the paper. In Section S2, we detail calculations that motivated Conjecture 1. In Section S3, we discuss the numerical methods used to solve the different models and present some additional numerical results.

\subsection{Proofs of Sections 2 and 3}

This section contains full details of proofs that were not included in the main paper due to their similarity to the corresponding proofs contained in \cite{10}. For the purpose of clarity, we recall that our paper focussed on studying weak travelling wave solutions (TWS) for the following PDE model:
\begin{equation}
 \begin{cases}
\displaystyle \frac{\partial N}{\partial t} =\frac{\partial}{\partial x} \left[ \left(1-M\right) \frac{\partial N}{\partial x} \right]+\left(1 - N\right)N,\\
\displaystyle \frac{\partial M}{\partial t} = -\kappa M N.
\end{cases}
\label{eq:S1}
\end{equation}

Introducing the travelling wave coordinate $\xi = x-ct$, where $c > 0$, and the ansatz $N(x,t) =\mathcal{N}(\xi)$ and $M(x,t)=\mathcal{M}(\xi)$, the TWS we seek must satisfy the following ODE system:
   \begin{subnumcases}{}
    {\displaystyle \diff{}{\xi}\left((1-\mathcal{M})\diff{ \mathcal{N}}{ \xi }\right) + c \diff{\mathcal{N}}{\xi} + (1-\mathcal{N})\mathcal{N} = 0;} \label{eq:1a} \\
 {\displaystyle c \diff{\mathcal{M}}{\xi} - \kappa \mathcal{M}\mathcal{N} = 0,} \label{eq:1b}
   \end{subnumcases}
and either of two sets of asymptotic conditions:
\begin{subnumcases}{}
   {\lim_{\xi \to -\infty}(\mathcal{N},\mathcal{M}) = (1,0), \, \lim_{\xi \to +\infty}(\mathcal{N},\mathcal{M}) = (0,1),} \label{eq:2a}\\ 
   {\lim_{ \xi \to -\infty}(\mathcal{N},\mathcal{M}) = (1,0), \lim_{\xi \to +\infty}(\mathcal{N},\mathcal{M}) = (0,\bar{\mathcal{M}}) \, \text{with } \bar{\mathcal{M}}\in[0,1).} 
   \label{eq:2b}
\end{subnumcases}

To simplify the analysis, we removed the singularity in system \eqref{eq:1a}-\eqref{eq:1b} by introducing a new independent variable $y$. Denoting derivatives with respect to $y$ using primes and further introducing a dependent variable $p=n'$, we studied solutions $(n_{\alpha,c},p_{\alpha,c},m_{\alpha,c})$ of the following system:
\begin{subnumcases}{}
{ n' = p},  \label{eq:1.1a}
\\[3pt]
{p' = -c p - (1-n)n(1-m),}\label{eq:1.1b}
\\
{m'= \frac{\kappa}{c} m (1-m) n.} \label{eq:1.1c}
\end{subnumcases}
subject to the following asymptotic conditions as $y \to -\infty$, for $\alpha \geq 0$:
\begin{equation}
\begin{aligned}
n(y) & = 1-e^{\lambda_2 y} + \mathcal{O}(e^{(\lambda_2 + \mu)y}), \\
p(y) & = -\lambda_2e^{\lambda_2 y} + \mathcal{O}(e^{(\lambda_2 + \mu)y}),\\
m(y) & = \alpha e^{\lambda_3 y} + \mathcal{O}(e^{(\lambda_3 + \mu)y}),
\label{eq:S1.2}
\end{aligned}
\end{equation}
where $\lambda_2 = (-c +\sqrt{c^2+4})/2 $, $\lambda_3= \kappa/c$ and $\mu = \min(\lambda_2,\lambda_3) >0$. 

\subsubsection{Proof of Lemma 2.1}
\begin{proof}
Suppose $(\mathcal{N}, \mathcal{M};c)$ is a weak TWS as defined in the paper. That is, 
\begin{enumerate}
    \item  $(\mathcal{N},\mathcal{M}) \in C(\mathbb{R},[0,1]) \times C(\mathbb{R},[0,1])$ and $(1-\mathcal{M}) \diff{\mathcal{N}}{\xi} \in L^2(\mathbb{R})$;
    \item $(\mathcal{N},\mathcal{M})$ is a weak solution of \eqref{eq:1a}-\eqref{eq:1b}, i.e. for all $(\phi,\psi) \in C^1(\mathbb{R}) \times C^1(\mathbb{R})$ with compact support
    \begin{equation}
      \int_\mathbb{R} \left\{\left[c \mathcal{N} + (1-\mathcal{M}) \diff{\mathcal{N}}{\xi} \right]\diff{\phi}{\xi} - (1-\mathcal{N})\mathcal{N}\phi \right\} \mathrm{d}\xi =0,
      \label{eq:2.1.1}
    \end{equation}
    \begin{equation}
     \int_\mathbb{R} \mathcal{M} \left\{c \diff{\psi}{\xi} + \kappa \mathcal{N} \psi \right\} \mathrm{d}\xi =0;
     \label{eq:2.1.2}
    \end{equation}
\item one of the pairs of asymptotic conditions given by \eqref{eq:2a} and \eqref{eq:2b}, respectively, are satisfied. 
\end{enumerate}

We first prove the regularity result for $\mathcal{N},\mathcal{M}$. $\mathcal{M}$ is a weak solution of \eqref{eq:1a}, therefore, by \eqref{eq:2.1.2}, its weak derivative is $\frac{\kappa}{c} \mathcal{MN}$. We have that $\mathcal{M}$ and $\mathcal{N}$ are continuous in $\mathbb{R}$, which implies the weak derivative of $\mathcal{M}$ is continuous in $\mathbb{R}$ since $\kappa, c > 0$. It follows that $\mathcal{M}$ is continuously differentiable in $\mathbb{R}$ and, hence, $\mathcal{M}$ is a classical solution to \eqref{eq:1b}. $\mathcal{N}$ is a weak solution of the ODE \eqref{eq:1a} posed on $I \coloneqq (-\infty,\bar{\xi})$. Using the Elliptic Regularity theorem and Sobolev Embedding theorem, we can show that $\mathcal{N} \in C^2(I)$ since $\mathcal{N}(1-\mathcal{N}) \in L^1(\mathbb{R})$. This means that $\mathcal{N}$ is a classical solution of \eqref{eq:1a} on $I$. Using a bootstrap argument, it is then straightforward to prove that $\mathcal{N}$ and $\mathcal{M}$ are smooth on $I$.

We now prove the strict bounds on $\mathcal{N}$ and $\mathcal{M}$ on $I$.  If there exists $\xi^* \in \mathbb{R}$ such that $\mathcal{M}(\xi^*) =0$, then the ODE \eqref{eq:1b} implies that $\mathcal{M}$ is identically zero for all $\xi \geq \xi^*$. This contradicts the requirement that $\mathcal{M}$ tends to $\bar{\mathcal{M}} > 0$ as $\xi \to +\infty$, so we must have $\mathcal{M}(\xi) > 0$ for all $\xi \in \mathbb{R}$. Since $\mathcal{M}$ converges to zero as $\xi \to -\infty$, we know that $\mathcal{M}(\xi) < \bar{\mathcal{M}}$ for $\xi$ negative with $|\xi|$ large. Therefore, we either have $ \mathcal{M}(\xi) < \bar{\mathcal{M}}$ for all $\xi \in \mathbb{R}$ and we set $\bar{\xi} = +\infty$, or there exists a unique $\bar{\xi}\in \mathbb{R}$ such that $\mathcal{M}(\bar{\xi}) = \bar{\mathcal{M}}$ and $\mathcal{M}(\xi) < \bar{\mathcal{M}}$ for any $\xi < \bar{\xi}$. Uniqueness follows from the fact that $\mathcal{M}$ is increasing for positive $\mathcal{N}$.

It is straightforward to show that $\mathcal{N}$ cannot have a non-degenerate minimum in $I$ that is equal to zero, without being identically zero in $I$, which contradicts the requirement that $\mathcal{N}$ converges to $1$ as $\xi \to -\infty$. Similarly, $\mathcal{N}$ cannot have a non-degenerate maximum in $I$ that is equal to $1$, without being identically equal to $1$ on this interval. If $\bar{\xi} = +\infty$, this contradicts the requirement that $\mathcal{N}$ converges to $0$ as $\xi \to +\infty$. If $\bar{\xi} < +\infty$, this means that $\diff{\mathcal{M}}{\xi}(\bar{\xi}) = \frac{\kappa}{c}\bar{\mathcal{M}} > 0$ and $\mathcal{M}(\xi) \geq \bar{\mathcal{M}}$ for $\xi \geq \bar{\xi}$ close to $\bar{\xi}$. Since $\mathcal{M}$ is increasing for positive $\mathcal{N}$, the former inequality contradicts the requirement that $\mathcal{M}$ tends to $\bar{\mathcal{M}}$ as $\xi \to +\infty$. We must therefore have $\mathcal{N}(\xi) < 1 $ for all $\xi \in I$. 

To prove \textit{(ii)}, we suppose $\bar{\xi} < +\infty$. By assumption, $\mathcal{M}$ tends to $\bar{\mathcal{M}}$ as $\xi \to +\infty$ and, by definition of $\bar{\xi} < +\infty$, $\mathcal{M}(\bar{\xi}) = \bar{\mathcal{M}}$. Now, $\mathcal{M}$ is increasing with $\mathcal{N} > 0$ and $\mathcal{M} > 0$, so the asymptotic condition for $\mathcal{M}$ can only be satisfied if $\mathcal{M}(\xi) = \bar{\mathcal{M}}$ and $\mathcal{N}(\xi) = 0$ for all $\xi \geq \bar{\xi}$.
\end{proof}

\subsubsection{Proof of Lemma 3.2}
\begin{proof}
Suppose that $(n_{\alpha,c},p_{\alpha,c},m_{\alpha,c})$  is defined on $J\coloneqq(-\infty,y_0)$, $y_0 \in \mathbb{R}$, and satisfies $n_{\alpha,c}(y) > 0$ for all $y \in J$. We need to check that we also have $ n_{\alpha,c}(y)<1$,  $p_{\alpha,c}(y) < 0$ and $0< m_{\alpha,c}(y)<1$ for all $y\in J$.

Since $n_{\alpha,c}(y) > 0$ for all $y \in J$, the right-hand side of \eqref{eq:1.1c} is strictly positive unless $m_{\alpha,c}(y) =0$ or $m_{\alpha,c}(y)  =1$, which means $m_{\alpha,c}$ is strictly increasing between $0$ and $1$. By construction, $m_{\alpha,c}(y) = 0$ only at $y = -\infty$, so $m_{\alpha,c}(y) > 0$ for all $y \in J$. Then, we can integrate \eqref{eq:1.1c} from the first $\bar{y} \in \mathbb{R}$ such that $m_{\alpha,c}(\bar{y}) > 0$ to any $y \leq y_0$ to obtain:
\begin{equation*}
m_{\alpha,c}(y) = \frac{1}{\frac{1-m_{\alpha,c}(\bar{y}) }{m_{\alpha,c}(\bar{y})} \exp{\left(-\frac{\kappa}{c} \int^y_{\bar{y}} n_{\alpha,c}(s) \mathrm{d}s\right)}+1},
\end{equation*}
which yields $m_{\alpha,c}(y)<1$ for all $y \leq y_0$. We therefore have $m_{\alpha,c}(y) < 1$ for all $y \in J$.

Suppose that $p_{\alpha,c}(y)$ is not strictly negative for all $y\in J$. Then, there exists $y_1 < y_0 \in J$ such that $p_{\alpha,c}(y_1) = 0$ and $p_{\alpha,c}(y) <0$  for all $y  \in (-\infty,y_1)$. Since $n_{\alpha,c}' = p_{\alpha,c}$, $\displaystyle \lim_{y \to -\infty} n_{\alpha,c}(y) = 1$ and $n_{\alpha,c}(y) > 0$ for all $y \in J$, we have $0 < n_{\alpha,c}(y) < 1$  for all $y  \in (-\infty,y_1)$. Then, $p_{\alpha,c}'(y_1) = -(1-n_{\alpha,c}(y_1))n_{\alpha,c}(y_1)(1-m_{\alpha,c}(y_1)) < 0$, as we have already shown that $ m_{\alpha,c}(y) < 1$ for all $y \in J$. This means that there is a strict, non-degenerate maximum for $n_{\alpha,c}$ at $y_1$, which contradicts the fact that $n_{\alpha,c}(y)$ is strictly decreasing in a left-neighbourhood of $y_1$. We therefore must have $p_{\alpha,c}(y) < 0$ for all $y \in J$, which finally implies that $ n_{\alpha,c}(y) <1 $ for all $y \in J$.
\end{proof}

\subsubsection{Proof of Lemma 3.3}
\begin{proof}
Fix $c>0$ and $\alpha_2 > \alpha_1 > 0$. In the rest of the proof, we denote $n_i = n_{\alpha_i,c}$ and  $m_i = m_{\alpha_i,c}$, for $i = 1,2$.

The first step in this proof is to show that the following inequalities 
\begin{equation}
n_{\alpha_2,c}(y) > n_{\alpha_1,c}(y), \quad m_{\alpha_2,c}(y) > m_{\alpha_1,c}(y),
\label{eq:S1.3}
\end{equation}
hold for $y \in (-\infty,y_1)$ for some $y_1<0$ sufficiently large and negative. We omit this part of the proof as it is identical to that in \cite{10}, except that the growth parameter $r$ in \cite{10} is equal to $1$ in our case.

The second part of this proof involves showing that $T(\alpha_1,c) \leq T(\alpha_2,c)$ and that inequalities \eqref{eq:S1.3} hold for all $y \in (-\infty,T(\alpha_1,c))$, where $T(\alpha,c)$ is defined for each $\alpha \geq 0, \, c > 0$ as:
\begin{equation}
T(\alpha,c) := \sup{\lbrace y_0 \in \mathbb{R} \mid n_{\alpha,c}(y) >0 \text{ for all } y < y_0\rbrace} \in \mathbb{R} \cup \{+\infty\}.
\label{eq:S1.4}
\end{equation}

Suppose for a contradiction that there exists $y_2 < \min{\left(T(\alpha_1,c),T(\alpha_2,c)\right)}$ such that inequalities \eqref{eq:S1.3} hold on the interval $[y_1,y_2)$ but at least one of them becomes an equality at $y = y_2$. We have that
\begin{equation*}
n_i'' + cn_i' + g_in_i = 0, \quad \text{where} \quad g_i = (1-n_i)(1-m_i).
\end{equation*}
Since $0 < n_1(y) < n_2(y) < 1$ and $0 < m_1(y) < m_2(y)< 1$ for $y \in [y_1,y_2)$, we know that $0 < g_2(y) < g_1(y)$ for $y \in [y_1,y_2)$. Now, the ratio $\rho(y) = n_2(y)/n_1(y)$ satisfies the following second order ODE:
\begin{equation}
\rho''(y) + \left( c + \frac{2n_1'(y)}{n_1(y)}\right)\rho'(y) +(g_2(y)-g_1(y))\rho(y) = 0, \quad y \in (y_1,y_2).
\label{eq:1.5}
\end{equation}
Using results from the first part of the proof (see \cite{10}), we know that $\rho(y_1) > 1$ and $\rho'(y_1) > 0$. From \eqref{eq:1.5}, we note that $\rho$ cannot have a positive maximum in $(y_1,y_2)$ so $\rho$ must be increasing for $y\in (y_1,y_2)$, i.e. $\rho(y) \geq \rho(y_1) > 1$ for $y \in [y_1,y_2)$. Taking the limit as $y \to y_2$, we find that $\rho(y_2) > 1$ and therefore $n_2(y) > n_1(y)$ for $y \in [y_1,y_2]$. 

We still need to show that we cannot have $m_2(y_2) = m_1(y_2)$. We know that, for $i =1,2$
\begin{equation}
m_i'(y) = \frac{\kappa}{c}m_i(y)(1-m_i(y))n_i(y), 
\label{eq:1.6}
\end{equation}
for $y \in [y_1,y_2]$. By assumption, $0 < n_1(y) < n_2(y) < 1$ and $0 < m_1(y) < m_2(y)< 1$ for $y \in [y_1,y_2)$, so we can integrate \eqref{eq:1.6} from $y_1$ to $y \leq y_2$ to obtain the following:
\begin{equation}
\frac{m_i(y)}{1-m_i(y)} = \frac{m_i(y_1)}{1-m_i(y_1)}  \exp{\left(\frac{\kappa}{c} \int_{y_1}^y n(s)\mathrm{d}s\right)},\; i = 1,2.
\label{eq:1.7}
\end{equation}
In particular, 
\begin{equation}
\frac{m_2(y_2)}{1-m_2(y_2)} \frac{1-m_1(y_2)}{m_1(y_2)} = \frac{m_2(y_1)}{1-m_2(y_1)}  \frac{1-m_1(y_1)}{m_1(y_1)}  \exp{\left(\frac{\kappa}{c} \int_{y_1}^{y_2} (n_2(s)-n_1(s)) \mathrm{d}s\right)}.
\label{eq:1.8}
\end{equation}
By assumption, $0 < n_1(y) < n_2(y) < 1$ for $y \in [y_1,y_2)$, so the exponential term in \eqref{eq:1.8} is larger than one. Since we also have $0 < m_1(y) < m_2(y)< 1$ for $y \in [y_1,y_2)$, we obtain the following inequalities:
\begin{equation}
\frac{m_2(y_2)}{1-m_2(y_2)} \frac{1-m_1(y_2)}{m_1(y_2)} > \frac{m_2(y_1)}{1-m_2(y_1)}  \frac{1-m_1(y_1)}{m_1(y_1)} > 1.
\label{eq:1.9a}
\end{equation}
This implies that $m_2(y_2) > m_1(y_2)$. 

We have therefore shown that both inequalities \eqref{eq:S1.3} hold at $y=y_2$ and we have reached the desired contradiction, which implies that inequalities \eqref{eq:S1.3} hold for all $y \in (-\infty, \min{\left(T(\alpha_1,c),T(\alpha_2,c)\right)})$. We must then also have $T(\alpha_1,c) \leq T(\alpha_2,c)$. Otherwise, $0 = n_{2}(T(\alpha_2,c)) <  n_{1}(T(\alpha_2,c))$ and, by continuity, we must have $n_{2,c}(y) =  n_{1,c}(y)$ for some $y < T(\alpha_2,c)$, which we just showed was impossible. 
\end{proof}

\subsubsection{Proof of Lemma 3.4}
\begin{proof}
We recall that $\alpha_0(c)$ is defined as follows:
\begin{equation}
\alpha_0(c) \coloneqq \inf{\lbrace \alpha > 0 \mid T(\alpha,c) = +\infty \rbrace} \in [0,+\infty].
\label{eq:1.9}
\end{equation}

Let $c \geq 2$. To show that $\alpha_0(c)= 0$, we must prove that $T(\alpha,c) =+\infty$ for all $\alpha > 0$. On any interval of the form $(-\infty,y_0]$, $y_0\in\mathbb{R}$, solutions $(n_{\alpha,c},p_{\alpha,c},m_{\alpha,c})$ of \eqref{eq:1.1a}-\eqref{eq:1.1c} that satisfy the asymptotic conditions \eqref{eq:S1.2} depend continuously on $\alpha$, uniformly in $y$. It is then straightforward to show that, as $\alpha \to 0$, the solution $(n_{\alpha,c},p_{\alpha,c},m_{\alpha,c})$ of \eqref{eq:1.1a}-\eqref{eq:1.1c} converges uniformly on compact intervals to $(n,n',0)$, where $n$ is the solution of the Fisher-KPP equation:
\begin{equation}
n'' + cn'+(1-n)n =0,
\label{eq:1.10}
\end{equation}
subject to the asymptotic conditions:
\begin{equation}
    \lim_{y\to- \infty} n(y) = 1, \quad \lim_{y\to+\infty} n(y) = 0, \quad \lim_{y\to \pm \infty} n'(y) = 0.
\label{eq:1.11}
\end{equation}
It is known that this solution satisfies $n(y) > 0$ for all $y\in\mathbb{R}$ for $c \geq 2$. By Lemma 3.3, we also know that $n_{\alpha,c(y)}$ is an increasing function of $\alpha$, so, for any $\alpha > 0$, we must have $n_{\alpha,c}(y) > n(y) > 0$ for all $y \in \mathbb{R}$. Therefore,  $T(\alpha,c) =+\infty$ for all $\alpha > 0$, as required. 

Let $0 < c < 2$.  It is known that the solution to the Fisher-KPP equation \eqref{eq:1.10} subject to the asymptotic conditions \eqref{eq:1.11} goes negative for such $c$, which means that there exists $\bar{y} \in \mathbb{R}$ such that $n(\bar{y}) < 0$. Since the solution $(n_{\alpha,c},p_{\alpha,c},m_{\alpha,c})$ of \eqref{eq:1.1a}-\eqref{eq:1.1c} converges uniformly on compact intervals to $(n,n',0)$, where $n$ is the solution of \eqref{eq:1.10} subject to \eqref{eq:1.11}, we can choose $\alpha > 0$ sufficiently small such that $n_{\alpha,c}(\bar{y})< 0$. In this case, $T(\alpha,c) < +\infty$ and this implies that $\alpha_0(c) > 0$.

To show that $ \alpha_0(c) < \infty$, we need to prove that there exists $\alpha> 0$ sufficiently large such that $T(\alpha,c) = +\infty$. We begin by introducing $y_0 = (\ln{\alpha})/\lambda_3$ and consider the following shifted asymptotic expansions of $(n_{\alpha,c},p_{\alpha,c},m_{\alpha,c})$ at $y\to -\infty$:
\begin{equation}
\begin{aligned}
\bar{n}(y) & = n_{\alpha,c}(y-y_0) =  1-\beta e^{\lambda_2 y} + \mathcal{O}(e^{(\lambda_2 + \mu)y}), \\
\bar{p}(y) & =p_{\alpha,c}(y-y_0) = -\lambda_2 \beta e^{\lambda_2 y} + \mathcal{O}(e^{(\lambda_2 + \mu)y}),\\
\bar{m}(y) & =  m_{\alpha,c}(y-y_0) = e^{\lambda_3 y} + \mathcal{O}(e^{(\lambda_3 + \mu)y}),
\label{eq:1.12}
\end{aligned}
\end{equation}
where $\beta = \alpha^{-\lambda_2/\lambda_3}$. On any interval of the form $(-\infty,y_0]$, these functions converge uniformly to $(1,0,\chi)$ as $\beta \to 0$, where $\chi$ is the unique solution of the following differential equation:
\begin{equation}
\chi' = \frac{\kappa}{c}\chi(1-\chi),
\label{eq:1.13}
\end{equation}
normalised so that $\chi(y) \to  e^{\lambda_3 y} + \mathcal{O}(e^{2\lambda_3 y})$ as $y\to -\infty$. Clearly, $\chi$ is increasing and converges to $1$ as $y \to +\infty$. 

By the uniform convergence of $(\bar{n},\bar{p},\bar{m})$ to $(1,0,\chi)$ as $\beta \to 0$ on  $(-\infty,y_0]$, for any $\epsilon > 0$, there exists $\beta^* > 0$ sufficiently small such that
\begin{equation}
\bar{n}(y) \geq 1-\epsilon, \quad \bar{p}(y) \geq -\epsilon, \quad \bar{m}(y) \geq \chi - \epsilon/2, \quad y \in (-\infty,y_0],
\label{eq:1.14}
\end{equation}
and these inequalities hold for all $0 < \beta \leq \beta^*$.
In addition, as $\chi$ converges to $1$ as $y \to +\infty$, we can find $y^* \in \mathbb{R}$ such that $\chi(y) \geq 1-\epsilon/2$ for $y \geq y^*$. 

For any $0 < \epsilon < 1$, it is therefore possible to choose $\beta > 0$ small enough and $y_1 \in \mathbb{R}$ large enough such that
\begin{equation}
\bar{n}(y) \geq 1-\epsilon, \quad \bar{p}(y) \geq -\epsilon, \; \forall y \in (-\infty, y_1], \quad \bar{m}(y_1) \geq 1-\epsilon.
\label{eq:1.15}
\end{equation}

We set $\epsilon = 1/(2K)$, where
\begin{equation}
K = 1 + \frac{1}{c} + \frac{2}{\kappa} > 1.
\label{eq:1.16}
\end{equation}
We can, therefore, find $\beta$ and $y_1$ such that \eqref{eq:1.15} holds for this $\epsilon$. Then, the idea for the rest of this proof is to show that we  have $\bar{n}(y) \geq 1-\epsilon K = 1/2$ for all $y \in [y_1,+\infty)$, which would imply that there exists $\alpha < +\infty$ such that $T(\alpha,c) = +\infty$, as required. 

Suppose for a contradiction that we can find $y > y_1$ such that $\bar{n}(y) < 1/2$. Then, there must exist $y_2 > y_1$ such that $\bar{n}(y_2) = 1/2$ and $\bar{n}(y) \geq 1/2$ for all $y \in (-\infty, y_2]$. Since $\bar{n}(y) \geq 1/2$ for $y \in [y_1,y_2]$, $\bar{m}$ satisfies the following for $y \in [y_1,y_2]$:
\begin{equation}
\bar{m}' = \frac{\kappa}{c} (1-\bar{m})\bar{m}\bar{n} \geq  \frac{\kappa}{2c} (1-\bar{m})\bar{m}.
\label{eq:1.17}
\end{equation}
We recall that, by assumption, $\bar{m}(y_1) \geq 1-\epsilon$ and we can therefore integrate \eqref{eq:1.17} from $y_1$ to $y \in (y_1,y_2]$ to obtain:
\begin{equation}
\frac{\bar{m}(y)}{1-\bar{m}(y)} \geq \frac{\bar{m}(y_1)}{1-\bar{m}(y_1)} e^{\left(\frac{\kappa}{2c}(y-y_1)\right)} \geq \frac{1-\epsilon}{\epsilon} e^{\left(\frac{\kappa}{2c}(y-y_1)\right)}.
\label{eq:1.18}
\end{equation}
From \eqref{eq:1.18}, we can show that $1-\bar{m}(y) \leq 2 \epsilon e^{-\left(\frac{\kappa}{2c}(y-y_1)\right)}$ for $y\in[y_1,y_2]$. Moreover, since $\bar{n}(y) \geq 1/2$ for $y \in [y_1,y_2]$, we can see that  $\bar{n}(y)$ satisfies the following inequality for $y\in[y_1,y_2]$:
\begin{equation}
\bar{n}''(y) + c \bar{n}'(y) + \epsilon e^{-\left(\frac{\kappa}{2c}(y-y_1)\right)} \geq \bar{n}''(y) + c \bar{n}'(y)+(1-\bar{n}(y))\bar{n}(y)(1-\bar{m}(y)) = 0.
\label{eq:1.19}
\end{equation}
We integrate this inequality with respect to $y$ from $y_1$ to any $y\in (y_1,y_2]$ and, assuming that $c-\kappa/(2c) \neq 0$, we obtain:
\begin{equation}
\bar{n}'(y) \geq \bar{n}'(y_1)e^{-c(y-y_1)}- \frac{\epsilon}{c-\kappa/(2c)}\left(e^{-(\kappa/(2c))(y-y_1)}- e^{-c(y-y_1)}\right).
\label{eq:1.20}
\end{equation}
This inequality can be integrated a second time to finally obtain:
 \begin{equation}
 \begin{split}
 \bar{n}(y)  & \geq  \bar{n}(y_1) + \bar{n}'(y_1) \frac{1-e^{-c(y-y_1)}}{c} - \frac{\epsilon}{c-\kappa/(2c)}\left(\frac{1-e^{-(\kappa/(2c))(y-y_1)}}{\kappa/(2c)}   +\frac{ -1 + e^{-c(y-y_1)}}{c}, \right)\\
 & >  \bar{n}(y_1) + \frac{1}{c} \bar{n}'(y_1) - \frac{\epsilon}{c\kappa/(2c)} \\
 & \geq 1-\epsilon - \frac{\epsilon}{c}- \frac{2\epsilon}{\kappa} = 1- K \epsilon = 1/2,
 \end{split}
\label{eq:1.21}
\end{equation}
where the second (strict) inequality relies on the fact that $\bar{n}(y_1) \geq 1-\epsilon$ and $\bar{n}'(y_1) \geq -\epsilon$, by assumption.
In particular, \eqref{eq:1.21} implies that $\bar{n}(y_2) > 1/2$, which contradicts our assumption that $\bar{n}(y_2) = 1/2$. We therefore have that $\bar{n}(y) \geq 1/2$ holds for all $y \in [y_1,+\infty)$, as required. 

In the case where $c-\kappa/(2c) = 0$, a similar proof follows taking $\epsilon = 1/(2K)$, where $K = 1 + 1/c + 1/c^2 > 1$. We note that equations \eqref{eq:1.17}-\eqref{eq:1.19} still hold and, integrating \eqref{eq:1.19} with respect to $y$ from $y_1$ to any $y\in (y_1,y_2]$, we obtain:
\begin{equation}
    \bar{n}'(y) \geq \bar{n}'(y_1)e^{-c(y-y_1)}- \epsilon (y-y_1) e^{-c(y-y_1)}.
\label{eq:1.22}
\end{equation}
Integrating this inequality once more we find:
 \begin{equation}
 \begin{split}
 \bar{n}(y)  & \geq  \bar{n}(y_1) + \bar{n}'(y_1) \frac{1-e^{-c(y-y_1)}}{c} + \epsilon \left( \frac{(y-y_1)e^{-c(y-y_1)}}{c} + \frac{e^{-c(y-y_1)} - 1}{c^2} \right)\\
 & >  \bar{n}(y_1) + \frac{1}{c} \bar{n}'(y_1) - \frac{\epsilon}{c^2} \\
 & \geq 1-\epsilon - \frac{\epsilon}{c}- \frac{\epsilon}{c^2} = 1- K \epsilon = 1/2,
 \end{split}
\label{eq:1.23}
\end{equation}
where the second (strict) inequality relies on the fact that $\bar{n}(y_1) \geq 1-\epsilon$ and $\bar{n}'(y_1) \geq -\epsilon$, by assumption. This proves the desired contradiction for $c-\kappa/(2c) = 0$.

To conclude, we have shown that $T(\alpha,c) =+\infty$ for $\alpha > 0$ large enough (equivalently $\beta > 0$ small enough).
\end{proof}

\subsubsection{Proof of Lemma 3.7}
\begin{proof}
We will begin by proving that a solution $(\tilde{n}, \tilde{p}, \tilde{m})$ of the following system:
\begin{subnumcases}{}
{  \diff{\tilde{n}}{y} = -\frac{1}{c}\tilde{m}\left(\tilde{n}-\frac{\tilde{p}}{c}\right)\left(1-\tilde{n}+ \frac{\tilde{p}}{c}\right)},  \label{eq:1.24a}
\\
{\diff{\tilde{p}}{y} = -c \tilde{p} - \tilde{m}\left(\tilde{n}-\frac{\tilde{p}}{c}\right)\left(1-\tilde{n}+ \frac{\tilde{p}}{c}\right)}\label{eq:1.24b}
\\
{\diff{\tilde{m}}{y} = - \frac{\kappa}{c} \tilde{m} (1-\tilde{m}) \left(\tilde{n}-\frac{\tilde{p}}{c}\right),} \label{eq:1.24c}
\end{subnumcases}
subject to the asymptotic conditions \eqref{eq:S1.2} and whose components converge to zero as $y\to +\infty$ exists on $\mathcal{W}_C^+$. We introduce the ansatz:
\begin{equation}
\tilde{n}(y) = \frac{a}{y} f(\ln{y}), \quad \tilde{m}(y) = \frac{b}{y}g(\ln{y}), \quad z = \ln{y},
\label{eq:1.25}
\end{equation}
where $a \coloneqq c/\kappa$ and $b \coloneqq c$. For $\tilde{n}$ and $\tilde{m}$ evolving according to \eqref{eq:1.24a} and \eqref{eq:1.24c}, with $\tilde{p} = \mathcal{P}(\tilde{n},\tilde{m})$, the functions $f$ and $g$ satisfy:
\begin{subnumcases}{}
{  \diff{f}{z} = f - \frac{1}{c} \frac{b}{a} g \left(a f - e^{z} \frac{\mathcal{P}}{c}\right) \left( 1 - e^{-z} a f +\frac{\mathcal{P}}{c}\right)  },  \label{eq:1.26a}
\\
{\diff{g}{z} = g + \frac{\kappa}{c} g (1-b e^{-z} g) \left(-a f +e^{z}\frac{\mathcal{P}}{c}\right)}. \label{eq:1.26b}
\end{subnumcases}

Now, we recall that
\begin{equation}
\mathcal{P}(\tilde{n},\tilde{m}) = -\frac{1}{c} \tilde{n}\tilde{m} (1 + \mathcal{O}(|\tilde{n}| + |\tilde{m}|)).
\label{eq:1.27a}
\end{equation}
Hence, from \eqref{eq:1.25}, we have
\begin{equation}
\Bigr|\mathcal{P}(\tilde{n},\tilde{m}) \Bigr|= \Bigr| -e^{-2z}\frac{c}{\kappa} f g\Bigr|\leq e^{-2z} \frac{c}{\kappa} | f| |g|.
\label{eq:1.27}
\end{equation}

Since $f$ and $g$ are bounded for all $z \in \mathbb{R}$, $\mathcal{P}$ converges to $0$ as $z \to +\infty$ and we can similarly show that $e^{z} \mathcal{P}$ converges to $0$ as $z \to +\infty$. Therefore, system \eqref{eq:1.26a}-\eqref{eq:1.26b} converges, as $z \to +\infty$, to:
\begin{subnumcases}{}
{  \diff{f}{z} = f -  g  f =f(1-g)  },  \label{eq:1.28a}
\\
{\diff{g}{z} = g - g f = g(1-f)}. \label{eq:1.28b}
\end{subnumcases}

This system has a unique, strictly positive fixed point $(f^*,g^*) =(1,1)$ and it can easily be verified through a linear stability analysis that this fixed point is hyperbolic with eigenvalues $\lambda_1 = 1, \lambda_2 = -1$. Therefore, there exists a one-dimensional stable manifold along which solutions to \eqref{eq:1.26a}-\eqref{eq:1.26b} can converge to $(1,1)$ as $z\to +\infty$ and they satisfy $ |f(z) - 1| +  |g(z) - 1 |  = \mathcal{O}(e^{-z})$ in this limit. We can therefore conclude that there exists a solution of \eqref{eq:1.24a}-\eqref{eq:1.24c}, subject to the asymptotic conditions \eqref{eq:S1.2}, on $\mathcal{W}_C^+$, whose components converge to $(0,0)$ as $y \to +\infty$, such that
\begin{equation}
\tilde{n}(y) = \frac{c}{\kappa y} + \mathcal{O}\left(\frac{1}{y^2}\right) \quad \text{and} \quad  \tilde{m}(y) = \frac{c}{ y}+ \mathcal{O}\left(\frac{1}{y^2}\right), \quad \text{as } y \to +\infty.
\label{eq:1.29}  
\end{equation}

We now need to prove uniqueness of the solution we have just constructed. We will do so by proving that any arbitrary positive solution to \eqref{eq:1.24a}-\eqref{eq:1.24c} on the centre manifold $\mathcal{W}_C^+$ that converges to the origin as $y \to +\infty$ must satisfy the asymptotic condition \eqref{eq:1.29}. This means that there exists $y_1>0$ large enough such that the chosen arbitrary solution coincides with the constructed solution for all $y \in [y_1,+\infty)$. This precisely implies that the arbitrary solution chosen coincides with the constructed solution on the centre manifold, from which uniqueness follows.

For ease of notation, we write the evolution of $\tilde{n}$ and $\tilde{m}$ on $\mathcal{W}_C^+$  as follows:
\begin{subnumcases}{}
{  \diff{\tilde{n}}{y} = -\frac{1}{c}\tilde{m}\left(\tilde{n}-\frac{\mathcal{P}(\tilde{n},\tilde{m})}{c}\right)\left(1-\tilde{n}+ \frac{\mathcal{P}(\tilde{n},\tilde{m})}{c}\right)  = \mathcal{G}(\tilde{n},\tilde{m})},  \label{eq:1.29a}
\\
{\diff{\tilde{m}}{y} = - \frac{\kappa}{c} \tilde{m} (1-\tilde{m}) \left(\tilde{n}-\frac{\mathcal{P}(\tilde{n},\tilde{m})}{c}\right) = \mathcal{H}(\tilde{n},\tilde{m}).} \label{eq:1.29b}
\end{subnumcases}

The solution $(\tilde{n},\tilde{m})$ constructed above satisfies $\tilde{n} = \Phi(\tilde{m})$ in some $\epsilon$-neighbourhood of the origin, where $\epsilon >0$ and $\Phi:(0,\epsilon) \to \mathbb{R}^+$ is a $C^k$ function satisfying the following functional relation for $w \in (0,\epsilon)$
\begin{equation}
\mathcal{G}(\Phi(w),w) = \Phi'(w) \mathcal{H}(\Phi(w),w).
\label{eq:1.30}
\end{equation}
In particular, the solution $(\tilde{n},\tilde{m})$ satisfies \eqref{eq:1.29}, so we have $\Phi(w) = w/\kappa + \mathcal{O}(w^2)$ as $w\to 0$. 

Now, consider any arbitrary positive solution $(\tilde{n},\tilde{m})$ of \eqref{eq:1.29a}-\eqref{eq:1.29b} that converges to the origin as $y \to +\infty$. Using \eqref{eq:1.29a}-\eqref{eq:1.29b}, we find:
\begin{equation}
    \diff{}{y} (\tilde{n}-\Phi(\tilde{m})) = \mathcal{G}(\tilde{n},\tilde{m}) - \Phi'(\tilde{m}) \mathcal{H}(\tilde{n},\tilde{m}).
    \label{eq:1.31a}
\end{equation}

Using \eqref{eq:1.30} and adding and subtracting $\mathcal{G}(\Phi(\tilde{m}),\tilde{m}) = \Phi'(\tilde{m})\mathcal{H}(\Phi(\tilde{m}),\tilde{m})$ to the right-hand side of \eqref{eq:1.31a}, we obtain:
\begin{equation}
 \diff{}{y} (\tilde{n}-\Phi(\tilde{m})) = \mathcal{G}(\tilde{n},\tilde{m}) - \mathcal{G}(\Phi(\tilde{m}),\tilde{m})  - \Phi'(\tilde{m})( \mathcal{H}(\tilde{n},\tilde{m})- \mathcal{H}(\Phi(\tilde{m}),\tilde{m}))\\
\label{eq:1.31b}
\end{equation}
Now, we note that, by the fundamental theorem of calculus for line integrals,
\begin{equation}
\begin{cases}
  \mathcal{G}(\tilde{n},\tilde{m}) - \mathcal{G}(\Phi(\tilde{m}),\tilde{m}) = \int^{\tilde{n}}_{\Phi(\tilde{m})} \partial_1 \mathcal{G}(s,\tilde{m}) \mathrm{d}s,  \\
  \Phi'(\tilde{m})( \mathcal{H}(\tilde{n},\tilde{m})- \mathcal{H}(\Phi(\tilde{m}),\tilde{m})) = \Phi'(\tilde{m}) \int^{\tilde{n}}_{\Phi(\tilde{m})} \partial_1 \mathcal{H}(s,\tilde{m}) \mathrm{d}s,
  \end{cases}
  \label{eq:1.31c}
\end{equation}
where we have denoted the derivative with respect to the first variable, $\tilde{n}$, by $\partial_1$. Finally, using \eqref{eq:1.31c} and reparametrising the curve from $(\Phi(\tilde{m}),\tilde{m})$ to $(\tilde{n},\tilde{m})$, Equation \eqref{eq:1.31b} yields 
\begin{equation}
 \diff{}{y} (\tilde{n}-\Phi(\tilde{m})) = \Delta (\tilde{n},\tilde{m})(\tilde{n} - \Phi(\tilde{m})),
\label{eq:1.31d}
\end{equation}
where
\begin{equation}
\Delta (\tilde{n},\tilde{m}) \coloneqq \int^1_0 \left(\partial_1 \mathcal{G}\left((1-t)\Phi(\tilde{m})+t\tilde{n},\tilde{m}\right) - \Phi'(\tilde{m}) \partial_1 \mathcal{H}\left((1-t)\Phi(\tilde{m})+t\tilde{n},\tilde{m}\right)\right) \mathrm{d}t.
\label{eq:1.32}
\end{equation}

Using \eqref{eq:1.27a} and \eqref{eq:1.29a}-\eqref{eq:1.29b}, it is straightforward to find the following asymptotic expansion of $\Delta (\tilde{n},\tilde{m})$ as $(\tilde{n},\tilde{m}) \to (0,0)$:
\begin{equation}
\Delta (\tilde{n},\tilde{m}) = \frac{\tilde{m}}{c}\left(\frac{1-\kappa}{\kappa}+n\right)(1+\mathcal{O}(|\tilde{n}| + |\tilde{m}|)).
\label{eq:1.33}
\end{equation}
In particular, if $0 < \kappa \leq 1$, then $\Delta (\tilde{n},\tilde{m}) > 0$ for sufficiently small $(\tilde{n},\tilde{m})$ on $\mathcal{W}_C^+$. Now, we can choose $y_1 > 0$ sufficiently large (to ensure that $(\tilde{n},\tilde{m})$ is sufficiently small) and integrate \eqref{eq:1.31d} from $y_1$ to $y_2$ to obtain
\begin{equation}
\tilde{n}(y_2)-\Phi(\tilde{m}(y_2)) = \exp{\left(\int_{y_1}^{y_2} \Delta (\tilde{n}(y),\tilde{m}(y)) \mathrm{d}y\right)} \left(\tilde{n}(y_1)-\Phi(\tilde{m}(y_1))\right),
\label{eq:1.34}
\end{equation}
from which we have that $|\tilde{n}(y_2)-\Phi(\tilde{m}(y_2))| \geq  |\tilde{n}(y_1)-\Phi(\tilde{m}(y_1))|$, whenever $0 < \kappa \leq 1$. By assumption, $(\tilde{n}, \tilde{m})$ converges to $(0,0)$ as $y \to +\infty$, i.e. $\tilde{n}$ and $\Phi(\tilde{m})$ both converge to $0$ as $y \to +\infty$. Therefore, the left-hand side of the above inequality converges to zero as $y_2 \to +\infty$, which implies that,  if $0 < \kappa \leq 1$, then $\tilde{n}(y_1)= \Phi(\tilde{m}(y_1))$ for all sufficiently large $y_1 > 0$. We can then conclude that, if $0 < \kappa \leq 1$, then any positive solution $(\tilde{n},\tilde{m})$ to \eqref{eq:1.29a}-\eqref{eq:1.29b} on the centre manifold, which converges to the origin as $y \to +\infty$,  coincides, up to a translation in $y$, with the solution constructed in the first part of the proof.

 We can similarly show that we have uniqueness for $\kappa > 1$. The solution $(\tilde{n},\tilde{m})$ constructed in the first part of the proof satisfies $\tilde{m} = \Psi(\tilde{n})$ in some $\epsilon$-neighbourhood of the origin, where $\epsilon >0$ and $\Psi:(0,\epsilon) \to \mathbb{R}^+$ is a $C^k$ function satisfying the following functional relation for $w \in (0,\epsilon)$:
\begin{equation}
\mathcal{H}(w,\Psi(w)) = \Psi'(w) \mathcal{G}(w,\Psi(w)).
\label{eq:1.35}
\end{equation}
In particular, the solution $(\tilde{n},\tilde{m})$ satisfies \eqref{eq:1.29}, so we have $\Psi(w) = \kappa w + \mathcal{O}(w^2)$ as $w\to 0$. 

Now, consider again any arbitrary positive solution $(\tilde{n},\tilde{m})$ of \eqref{eq:1.29a}-\eqref{eq:1.29b} that converges to the origin as $y \to +\infty$. Similarly to the derivation of \eqref{eq:1.31d}, we find
\begin{equation}
\diff{}{y} (\tilde{m}-\Psi(\tilde{n})) = \Delta (\tilde{n},\tilde{m})(\tilde{m} - \Psi(\tilde{n})),
\label{eq:1.36}
\end{equation}
where, denoting the derivative with respect to the second variable, $\tilde{m}$, by $\partial_2$, we have
\begin{equation}
\Delta (\tilde{n},\tilde{m}) = \int^1_0 \left(\partial_2 \mathcal{H}\left(\tilde{n}, (1-t)\Psi(\tilde{n})+t\tilde{m}\right) - \Psi'(\tilde{n}) \partial_2 \mathcal{G}\left(\tilde{n}, (1-t)\Phi(\tilde{m})+t\tilde{n},\tilde{m}\right)\right) \mathrm{d}t.
\label{eq:1.37}
\end{equation}

Using \eqref{eq:1.27a} and \eqref{eq:1.29a}-\eqref{eq:1.29b}, it is straightforward to find the following asymptotic expansion of $\Delta (\tilde{n},\tilde{m})$ as $(\tilde{n},\tilde{m}) \to (0,0)$
\begin{equation}
\Delta (\tilde{n},\tilde{m}) = \frac{\kappa}{c} \tilde{n}((\kappa-1)n +m)(1+\mathcal{O}(|\tilde{n}| + |\tilde{m}|)).
\label{eq:1.38}
\end{equation}
In particular, if $\kappa > 1$, then $\Delta (\tilde{n},\tilde{m}) > 0$ for sufficiently small $(\tilde{n},\tilde{m})$ on $\mathcal{W}_C^+$. Now, integrating \eqref{eq:1.36} from $y_1$ to $y_2$ for $y_1 >0$ sufficiently large, we obtain
\begin{equation}
\tilde{m}(y_2)-\Psi(\tilde{n}(y_2)) = \exp{\left(\int_{y_1}^{y_2} \Delta (\tilde{n}(y),\tilde{m}(y)) \mathrm{d}y\right)} \left(\tilde{m}(y_1)-\Psi(\tilde{n}(y_1))\right),
\label{eq:1.39}
\end{equation}
from which we have that $|\tilde{m}(y_2)-\Psi(\tilde{n}(y_2))| \geq \left(\tilde{m}(y_1)-\Psi(\tilde{n}(y_1))\right)|$, whenever $\kappa > 1$. By assumption, $(\tilde{n}, \tilde{m})$ converges to $(0,0)$ as $y \to +\infty$, i.e. $\tilde{m}$ and $\Psi(\tilde{n})$ both converge to $0$ as $y \to +\infty$. Therefore, the left-hand side of the above inequality converges to zero as $y_2 \to +\infty$, which implies that,  if $\kappa > 1$, then $\tilde{m}(y_1)= \Psi(\tilde{n}(y_1))$ for all sufficiently large $y_1 > 0$. Therefore, if $ \kappa > 1$, then any positive solution $(\tilde{n},\tilde{m})$ to \eqref{eq:1.29a}-\eqref{eq:1.29b} on the centre manifold $\mathcal{W_C^+}$, converging to the origin as $y \to +\infty$, coincides, up to a translation in $y$, with the solution constructed in the first part of the proof. 

We have now proved the uniqueness of the solution to \eqref{eq:1.29a}-\eqref{eq:1.29b} that converges to the origin as $y\to +\infty$ and satisfies the asymptotic conditions \eqref{eq:1.29}.
\end{proof}

\subsubsection{Proof of Theorem 3.10}

\begin{proof}
Given any $c > 0$, we choose $\alpha = \alpha_1(c) \in (0,+\infty)$, where $\alpha_1(c)$ is defined for each $c > 0$ as:
\begin{equation}
\alpha_1(c) \coloneqq \inf{\lbrace \alpha > \alpha_0(c) \mid m_\infty(\alpha,c) =  1\rbrace}.
\label{eq:1.40}
\end{equation}
Then, by Lemma 3.19, we know that the solution $(n_{\alpha,c}, p_{\alpha,c},m_{\alpha,c})$ of \eqref{eq:1.1a}-\eqref{eq:1.1c} that satisfies the asymptotic conditions \eqref{eq:S1.2} converges to $(0,0,\bar{m})$ as $y \to +\infty$. We now reverse the change of variables from $\xi$ to $y$ given by
\begin{equation}
\diff{y}{\xi} \equiv \Phi'(\xi) = \frac{1}{1-\mathcal{M}(\xi)} \,\, \forall \xi \in \mathbb{R},
\label{eq:1.41}
\end{equation}
or, equivalently,
\begin{equation}
\diff{\xi}{y} \equiv (\Phi^{-1})'(y) = 1-m(y) \,\, \forall y \in \mathbb{R}.
\label{eq:1.42}
\end{equation}

More specifically, we first define 
\begin{equation}
\xi(y) = \Phi^{-1}(y) = y - \int^y_{-\infty} m(s) \mathrm{d}s \,\, \forall y \in \mathbb{R}.
\label{eq:1.43}
\end{equation}

This allows us to define $\xi(y)$ as $y\to \pm \infty$ using the asymptotic expansions \eqref{eq:S1.2} and \eqref{eq:1.29}, respectively. We have
\begin{equation}
    \xi(y) =
    \begin{cases}
        y - \frac{\alpha}{\lambda_3} e^{\lambda_3 y} + \mathcal{O}\left(e^{(\lambda_3+\mu)y}\right) \quad \text{as } y \to -\infty, \\
        c\ln{(y)} + \xi_0 + \mathcal{O}\left(\frac{1}{y}\right) \quad \quad \,\,\,\,\;\; \text{as } y \to +\infty,
    \end{cases}
\label{eq:1.44}
\end{equation}
where $\xi_0 \in \mathbb{R}$.

From \eqref{eq:1.44}, it is clear that, as $y \to +\infty$, $\xi \to +\infty$. We therefore do not have sharp fronts for system \eqref{eq:1a}-\eqref{eq:1b}. This is consistent with the fact that the solution defined by Lemma 3.7 is a smooth solution in the sense that $m(y)< 1$ for all $y\in \mathbb{R}$ and $m = 1$ only at $+\infty$. By inverting \eqref{eq:1.44}, we obtain
\begin{equation}
    y = \Phi(\xi) = 
    \begin{cases}
        \xi + \frac{\alpha}{\lambda_3} e^{\lambda_3 \xi} + \mathcal{O}\left(e^{(\lambda_3+\mu)\xi}\right) \quad \text{as } \xi \to -\infty, \\
        e^{(\xi - \xi_0)/c} + \mathcal{O}\left(1\right) \quad \quad \quad \quad \quad \text{as } \xi \to +\infty.
    \end{cases}
\label{eq:1.45}
\end{equation}

We can now construct a solution of \eqref{eq:1a}-\eqref{eq:2a} which satisfies the asymptotic conditions \eqref{eq:2a} by defining $\mathcal{N}(\xi)=n_{\alpha,c}(\Phi(\xi))$ and $\mathcal{M}(\xi)=m_{\alpha,c}(\Phi(\xi))$. Since we know that $n_{\alpha,c}'(y) = n_{\alpha,c}'(\Phi(\xi)) < 0$ and $m_{\alpha,c}'(y) = m_{\alpha,c}'(\Phi(\xi)) > 0$ for all $y \in \mathbb{R}$, it is easy to see that $\mathcal{N}'(\xi) < 0$ and $\mathcal{M}'(\xi) > 0$ for all $\xi \in \mathbb{R}$. This shows that $\mathcal{N}$ and $\mathcal{M}$ are monotonically strictly decreasing and increasing functions of $\xi$, respectively. Finally, the uniqueness of the constructed solution is ensured by the uniqueness of the solution $(n_{\alpha,c},p_{\alpha,c},m_{\alpha,c})$ proven in Lemma 3.18.
\end{proof}

\subsubsection{Proof of Theorem 3.16}
\begin{proof}
We first prove item \textit{(ii)}. Fix $\bar{\mathcal{M}} \in[0,1)$ and let $\bar{m} =\bar{\mathcal{M}}$. Given any $c \geq c^*_\kappa(\bar{m})$, we choose $\alpha = \alpha_{\bar{m}}(c) \in [\alpha_0(c),\alpha_1(c))$, where $\alpha_{\bar{m}}(c)$ is defined for each $c \geq c^*_\kappa(\bar{m})$ as
\begin{equation}
\alpha_{\bar{m}}(c) \coloneqq {\lbrace \alpha \geq \alpha_0(c) \mid m_\infty(\alpha,c) =  \bar{m}\rbrace}.
\label{eq:1.46}
\end{equation}
Then, by Lemma 3.5, we know that the solution $(n_{\alpha,c}, p_{\alpha,c},m_{\alpha,c})$ of \eqref{eq:1.1a}-\eqref{eq:1.1c} satisfying the asymptotic conditions \eqref{eq:S1.2} converges to $(0,0,\bar{m})$ as $y \to +\infty$. Similarly to the proof of Theorem 3.10, we now reverse the change of variables from $\xi$ to $y$ given by \eqref{eq:1.41}, or, equivalently, \eqref{eq:1.42}. Since we want to define $\xi(y)$ as $y\to \pm \infty$ and we already have an asymptotic expansion of $m$ as $y \to -\infty$ in \eqref{eq:S1.2}, we only need to compute an asymptotic expansion of $m$ as $y\to + \infty$.

We saw in the proof of Lemma 3.12 that $(n_{\alpha,c}, p_{\alpha,c},m_{\alpha,c})$ approaches $(0,0,\bar{m})$ via a two-dimensional stable manifold. Computing a general asymptotic expansion of solutions of \eqref{eq:1.1a}-\eqref{eq:1.1c} in a neighbourhood of $(0,0,\bar{m})$ that lie on this stable manifold, we find that there must exist $C_1,C_2 \in \mathbb{R}$ such that $n \in (0,1)$, $p < 0$ and $m \in (0,\bar{m})$ (i.e. such that the solution remains in $\mathcal{D}_{\bar{m}}$) and, as $y\to +\infty$, $(n_{\alpha,c}(y), p_{\alpha,c}(y),m_{\alpha,c}(y))$ satisfies:
\begin{equation}
\begin{aligned}
n_{\alpha,c}(y) & = C_1 \left(\frac{c \nu_1}{\kappa (1-\bar{m})\bar{m}}\right) e^{\nu_1 y} + C_2 \left(\frac{c \nu_2}{\kappa (1-\bar{m})\bar{m}}\right) e^{\nu_2 y} + \mathcal{O}(e^{2\nu_1 y}), \\
p_{\alpha,c}(y) & = C_1 \left( \frac{-c^2 \nu_1}{\kappa (1-\bar{m})\bar{m}} -\frac{c}{\kappa \bar{m}}\right) e^{\nu_1 y} + C_2 \left( \frac{-c^2 \nu_2}{\kappa (1-\bar{m})\bar{m}} -\frac{c}{\kappa \bar{m}}\right) e^{\nu_2 y} + \mathcal{O}(e^{2\nu_1y}),\\[4pt]
m_{\alpha,c}(y) & = \bar{m} +C_1 e^{\nu_1 y} + C_2 e^{\nu_2 y} + \mathcal{O}(e^{2\nu_1 y}),
\label{eq:1.47}
\end{aligned}
\end{equation}
where 
\begin{equation*}
\nu_{1} = \frac{- c + \sqrt{c^2 - 4(1-\bar{m})}}{2}, \quad \nu_{2} = \frac{- c - \sqrt{c^2 - 4(1-\bar{m})}}{2}.
\end{equation*}

Using \eqref{eq:S1.2} and \eqref{eq:1.47}, we can now define $\xi(y)$ as $y \to \pm \infty$ in the following way:
\begin{equation}
    \xi = \Phi^{-1}(y) =
    \begin{cases}
        y - \frac{\alpha}{\lambda_3} e^{\lambda_3 y} + \mathcal{O}\left(e^{(\lambda_3+\mu)y}\right), \qquad \qquad \qquad \quad \, \text{as } y \to -\infty, \\[5pt]
        y(1-\bar{m}) - \frac{C_1}{\nu_1} e^{\nu_1 y} - \frac{C_2}{\nu_2} e^{\nu_2 y}+ \mathcal{O}\left(e^{2\nu_1 y}\right), \quad  \text{as } y \to +\infty.
    \end{cases}
\label{eq:1.48}
\end{equation}

From \eqref{eq:1.48}, it is clear that, as $y \to +\infty$, $\xi \to +\infty$. We therefore do not have sharp fronts for system \eqref{eq:1a}-\eqref{eq:1b} subject to the asymptotic conditions \eqref{eq:2b}. By inverting \eqref{eq:1.48}, we obtain
\begin{equation}
    y = \Phi(\xi) =
    \begin{cases}
        \xi + \frac{\alpha}{\lambda_3} e^{\lambda_3 \xi} + \mathcal{O}\left(e^{(\lambda_3+\mu)y}\right), \qquad \qquad \qquad \qquad \qquad \qquad \quad \text{as } \xi \to -\infty, \\[5pt]
        \frac{\xi}{1-\bar{m}} + \frac{C_1}{\nu_1 (1-\bar{m})}e^{\frac{\nu_1\xi}{(1-\bar{m})}}+ \frac{C_2}{\nu_2 (1-\bar{m})}e^{\frac{\nu_2\xi}{(1-\bar{m})}}+ \mathcal{O}\left(e^{(\nu_1+\nu_2)}\right), \quad  \text{as } \xi \to +\infty.
    \end{cases}
\label{eq:1.49}
\end{equation}

We can now construct a solution of \eqref{eq:1a}-\eqref{eq:1b} which satisfies the asymptotic conditions \eqref{eq:2b} for $\bar{\mathcal{M}}=\bar{m}$ by defining $\mathcal{N}(\xi)=n_{\alpha,c}(\Phi(\xi))$ and $\mathcal{M}(\xi)=m_{\alpha,c}(\Phi(\xi))$. Since we know that $n_{\alpha,c}'(y) = n_{\alpha,c}'(\Phi(\xi)) < 0$ and $m_{\alpha,c}'(y) = m_{\alpha,c}'(\Phi(\xi)) > 0$ for all $y \in \mathbb{R}$, it is easy to see that $\diff{\mathcal{N}}{\xi} < 0$ and $\diff{\mathcal{M}}{\xi} > 0$ for all $\xi \in \mathbb{R}$. This shows that $\mathcal{N}$ and $\mathcal{M}$ are monotonically strictly decreasing and increasing in $\xi$, respectively. Finally, the uniqueness of the constructed solution follows from the uniqueness of the solution $(n_{\alpha,c},p_{\alpha,c},m_{\alpha,c})$.

We now prove item \textit{(i)} by contradiction. Fix $\bar{\mathcal{M}} \in [0,1)$. Let $0 < c < c^*_\kappa(\bar{\mathcal{M}})$. Suppose that system $\eqref{eq:S1.3}$ has a unique (up to translation) weak travelling wave solution $(\mathcal{N},\mathcal{M};c)$ connecting $(1,0)$ and $(0,\bar{\mathcal{M}})$, where $\mathcal{N}$ and $\mathcal{M}$ are monotonically strictly decreasing and increasing in $\xi = x - ct$, respectively. Then, we can construct a solution of \eqref{eq:1.1a}-\eqref{eq:1.1c} that satisfies the asymptotic conditions $\displaystyle \lim_{y \to - \infty} (n(y),p(y),m(y)) = (1,0,0)$, $\displaystyle \lim_{y \to +\infty}( n(y),p(y),m(y)) = (0,0,\bar{m})$ for $\bar{m}=\bar{\mathcal{M}}$ by defining $n(y)=\mathcal{N}(\Phi^{-1}(y))$, $p(y) = n'(y)$ and $m(y)=\mathcal{M}(\Phi^{-1}(y))$. Since $\mathcal{N}$ and $\mathcal{M}$ are monotonically strictly decreasing and increasing with respect to $\xi = \Phi^{-1}(y)$, it is easy to see that $n$ and $m$ are monotonically strictly decreasing and increasing with respect to $y$ and that $p(y) < 0 \; \forall y \in \mathbb{R}$. This implies that there exists $\alpha = \alpha(c)$ such that the solution of \eqref{eq:1.1a}-\eqref{eq:1.1c} that satisfies \eqref{eq:S1.2} stays in region $\mathcal{D}_{\bar{m}}$ and converges to $(0,0,\bar{m})$ as $y\to\infty$. Since $0 < c < c^*_\kappa(\bar{\mathcal{M}}) = c^*_\kappa(\bar{m})$, this contradicts the definition of the minimal wave speed.
\end{proof}

\subsection{Supporting results for Conjecture 1}
In this section, we detail calculations that support Conjecture 1. Let $\bar{m}\in(0,1)$. We consider the following boundary value problem:
\begin{equation}
\begin{cases}
  P' = -c - \frac{(1-n)n(1-M(n))}{P}, \\
  M' = \frac{\kappa}{c}\frac{M(1-M)n}{P}, \\
  P(0)= 0, M(0) = \bar{m},
\end{cases}
\label{eq:2.1}
\end{equation}
subject to the additional conditions 
\begin{equation}
    P(n) < 0 \,\, \text{and} \,\,  M(n) \in (0, \bar{m}) \,\, \forall n \in (0,1), \quad P(1) = M(1) = 0.
\label{eq:2.2}
\end{equation}

The result underpinning Conjecture 1 is the following. If the reaction term $g(n) = (1-n)n(1-M(n))$ is of Fisher-KPP type and $g''(n) < 0 \,\, \forall n \in [0,1]$, then there exists a unique solution to \eqref{eq:2.1} that satisfies \eqref{eq:2.2} for any $c \geq 2 \sqrt{g'(0)}$. 

Suppose $(P,M)$ is the unique solution of the Cauchy problem \eqref{eq:2.1}, which exists by Cauchy-Lipschitz theory. We will check that $g$ is of Fisher-KPP type, calculate $g'(0)$ and study the sign of $g''$, making a hypothesis about the necessary conditions for $g''(n) < 0 \; \forall n \in [0,1]$ to hold. To do this, we first need to prove three preliminary results.

\paragraph{Result 1: the value of $P'(0)$.} By letting $n \to 0$ in the differential equation~\eqref{eq:2.1}$_1$ for $P(n)$ and using l'H\^opital's rule to compute $\displaystyle{\lim_{n \to 0} \dfrac{n}{P(n)}}$, we obtain
 \begin{equation}
     P'(0) = -c - (1-\bar{m}) \, \dfrac{1}{P'(0)} \quad \Longrightarrow \quad P_\pm'(0) = \dfrac{-c \pm \sqrt{c^2 - 4 (1-\bar{m})}}{2}.
     \label{eq:S2.3}
 \end{equation}
Since $\bar{m} \in (0,1)$, \eqref{eq:S2.3} implies that, if $c \geq 2 \sqrt{1-\bar{m}}$, then
\begin{equation}
    P_\pm'(0) \in \mathbb{R}, \quad -\infty < P_\pm'(0) < 0.
    \label{eq:S2.4}
\end{equation}

\paragraph{Result 2: Boundedness of $M$.} Solving the differential equation \eqref{eq:2.1}$_2$ for $M$ yields
$$ M(n) = \dfrac{1}{1 + A \exp\left(\dfrac{\kappa}{c} \, \displaystyle{\int_0^n \dfrac{q}{-P(q)} \, {\rm d}q} \right)},$$
where $A$ is a constant of integration. If $c \geq 2 \sqrt{1-\bar{m}}$, then \eqref{eq:S2.4} holds, which implies that
$$
\lim_{q \to 0} \dfrac{q}{-P(q)} = \dfrac{1}{-P'(0)} < +\infty.
$$
and, imposing the condition $M(0)=\bar{m}$, we find $A = \dfrac{1-\bar{m}}{\bar{m}}$. Hence,
 \begin{equation}
     M(n) = \dfrac{\overline{m}}{\overline{m}+(1-\overline{m})\exp\left(\dfrac{\kappa}{c} \, \displaystyle{\int_0^n \dfrac{q}{-P(q)} \, {\rm d}q} \right)}
     \label{eq:S2.5}
 \end{equation}
and, therefore,
\begin{equation}
    0 \leq M(n) \leq 1 \quad \forall \, n \in [0,1].
    \label{eq:S2.6}
\end{equation}

\paragraph{Result 3: Non-positivity of $P$.} Let $c \geq 2 \sqrt{1-\bar{m}}$, then \eqref{eq:S2.4} implies that $P'(0)<0$ and, therefore, $P(n)<0$ in a right-neighbourhood of $n=0$. Suppose, for a contradiction, that there exists $n_1 \in (0,1]$ such that $P(n_1)>0$. Then, we can find $n_0 \in (0,n_1)$ such that $P(n_0)=0$ and $P(n) \geq 0$ for $n \in (n_0,n_1)$. Multiplying both sides of \eqref{eq:2.1}$_1$ by $P$ and integrating between $n_0$ and $n_1$, we obtain
\begin{equation}
    \dfrac{1}{2} \int_{n_0}^{n_1} (P^2(n))' \, {\rm d}n = - c \, \int_{n_0}^{n_1} P(n) \, {\rm d}n - \int_{n_0}^{n_1} n \, (1-n) \, (1-M(n)) \, {\rm d}n.
    \label{eq:2.7}
\end{equation}
Using the fact that $P(n) \geq 0$ for $n \in (n_0,n_1)$ and \eqref{eq:S2.6}, we have
$$
 - c \, \int_{n_0}^{n_1} P(n) \, {\rm d}n \leq 0, \,\, \text{and } \,\, - \int_{n_0}^{n_1} n \, (1-n) \, (1-M(n)) \, {\rm d}n \leq 0.
$$
Therefore, \eqref{eq:2.7} implies that
$$
\dfrac{1}{2} P^2(n_1) = \dfrac{1}{2} \int_{n_0}^{n_1} (P^2(n))' \, {\rm d}n \leq 0, \text{ i.e. } P(n_1) =0.
$$ 
This contradicts the fact that $P(n_1)>0$, and, thus, we must have
\begin{equation}
    P(n) \leq 0 \quad \forall \, n \in [0,1].
    \label{eq:2.8}
\end{equation}

Further, given the expression \eqref{eq:S2.5} for $M$, \eqref{eq:S2.4} and \eqref{eq:2.8} imply that $M(n)$ is strictly decreasing in a neighbourhood of $n=0$ and non-increasing in $n \in (0,1]$ , i.e. 
\begin{equation}
    0 \leq M(n) < \bar{m} < 1 \,\, \forall n \in (0,1].
    \label{eq:S2.9}
\end{equation}

Now, the reaction term $g$ is of Fisher-KPP type if $g \in C([0,1])$, $g(0)=g(1)=0$, and $g(n) > 0 \; \forall n \in (0,1)$. Since $M$ is a component of a classical solution to the Cauchy problem \eqref{eq:2.1}, $M$ is a $C^1([0,1])$ function and it follows that $g \in C([0,1])$. By direct calculation, we have
\begin{equation*}
g(0) = (1-0)0(1-\bar{m}) = 0, \quad g(1) = (1-1)1(1-M(1)) = 0,
\end{equation*}
and \eqref{eq:S2.9} implies that $g(n)= (1-n)n(1-M(n)) > 0 \, \forall n \in (0,1)$. Therefore, we have shown that $g$ is of Fisher-KPP type.

Moreover, we have $g'(n) = (1-2 \, n) (1-M(n)) - n \, (1-n) \, M'(n)$, and, thus
\begin{equation}
\begin{aligned}
 g'(0) & = \lim_{n \to 0} \left[(1-2 \, n) (1-M(n)) - n \, (1-n) \,\frac{\kappa}{c}\frac{M(n)\,(1-M(n))\,n}{P(n)}\right], \\
 & = 1-\bar{m} - \frac{\kappa}{c} \bar{m}(1-\bar{m}) \lim_{n \to 0} \frac{2n}{P'(n)},
\end{aligned}
\label{eq:S2.10}
\end{equation}
by l'H\^{o}pital's rule. If $c \geq 2 \sqrt{1-\bar{m}}$, then $P'(0) \in (-\infty,0)$ is well-defined by \eqref{eq:S2.4} and, using \eqref{eq:S2.10}, we can conclude that
\begin{equation}
    g'(0) = 1-\bar{m}.
    \label{eq:2.11}
\end{equation}

This value of $g'(0)$ yields a minimal wave speed $c = 2\sqrt{g'(0)}=2\sqrt{1-\bar{m}}$, which is consistent with our recurrent assumption that $c \geq 2\sqrt{1-\bar{m}}$.

Finally, we study the sign of $g''$ as a function of $c$ and $\kappa$ and make a hypothesis about the necessary conditions these two parameters must satisfy for the condition $g''(n) < 0 \;\; \forall n \in [0,1]$ to hold. 
\begin{eqnarray*}
g'' &=& -2 (1-M) - 2 \, (1-2 n) \, M' - n \, (1-n) \, M'' 
\\
&=& -2 (1-M) - 2 \, (1-2 n) \, (1-M) \, M \, \dfrac{\kappa}{c} \, \dfrac{n}{P} +
\\
&& - n \, (1-n) \, (1-M) \, M \, \dfrac{\kappa}{c} \left[\dfrac{\kappa}{c} \, \left(\dfrac{n}{P} \right)^2 (1-2 M) + \dfrac{1}{P} + \dfrac{n}{P^2} \left(c + \dfrac{n \, (1-n) \, (1-M)}{P} \right) \right]
\\
&=&  -2 (1-M) +
\\
&& - 2 (1-M) \, M \, \dfrac{\kappa}{c} \, \left(\dfrac{n}{P}\right)^2 \left\{\left[1-2 n + \dfrac{(1-n)}{2}\right]  \dfrac{P}{n} + \, \dfrac{\kappa}{c} \dfrac{n \, (1-n)}{2} \, (1-2 M) - \dfrac{(1-n)}{2} \, P' \right\}
\end{eqnarray*}
that is,
$$
g''(n) =  -2 (1-M(n)) \left(1 - H(n) \right) 
$$
where
$$
H(n) := -M(n) \, \dfrac{\kappa}{c} \, \left(\dfrac{n}{P(n)}\right)^2 \left\{\left[1-2 n + \dfrac{(1-n)}{2}\right]  \dfrac{P(n)}{n} + \, \dfrac{\kappa}{c} \dfrac{n (1-n)}{2} \, (1-2 M(n)) - \dfrac{(1-n)}{2} \, P'(n) \right\}.
$$

Given $M(0) = \bar{m}$ and \eqref{eq:S2.9}, $0 \leq M(n) < 1$ for all $n \in [0,1]$, and, thus, $g''(n) < 0 \; \forall n\in[0,1]$ if and only if $H(n) < 1 \; \forall n\in[0,1]$. Using l'H\^opital's rule to compute $\displaystyle{\lim_{n \to 0} \dfrac{n}{P(n)}}$ and $\displaystyle{\lim_{n \to 0} \dfrac{P(n)}{n}}$ we find that
$$
H(0) = -\bar{m} \, \dfrac{\kappa}{c} \, \left(\dfrac{1}{P'(0)}\right)^2 \left\{\dfrac{3}{2} \, P'(0) - \dfrac{1}{2} \, P'(0)  \right\} = -\bar{m} \, \dfrac{\kappa}{c} \, \dfrac{1}{P'(0)}.
$$
Using the fact that
$$
{P_{\pm}'(0) = \dfrac{-c \pm \sqrt{c^2 - 4 (1-\bar{m})}}{2}},
$$
we obtain
$$
H(0) < 1 \quad \iff \quad \kappa \, \dfrac{\bar{m}}{c} \, \dfrac{2}{c - \sqrt{c^2 - 4 (1-\bar{m})}} < 1,
$$
since $-[P_+'(0)]^{-1} \geq -[P_-'(0)]^{-1}$. Solving this inequality, we find that, if $c \geq 2 \sqrt{1-\bar{m}}$ and $0 < \kappa < \frac{1-\bar{m}}{\bar{m}}$, then $H(0)<1$. We now make the hypothesis that, if $c \geq 2 \sqrt{1-\bar{m}}$ and $0 < \kappa < \frac{1-\bar{m}}{\bar{m}}$, then
$$
{H'(n) \leq 0 \quad \forall \,  n \in [0,1].}
$$
This would allow us to conclude that 
\begin{equation}
    H(n) < 1 \;\; \forall \, n \in [0,1],  \quad \text{i.e.} \quad g''(n) < 0 \;\; \forall \, n \in [0,1]
    \label{eq:2.12}
\end{equation}

To summarise, if $0 < \kappa \leq \frac{1-\bar{m}}{\bar{m}}$, then, for any $c \geq 2 \sqrt{1-\bar{m}}$, $g$ is of Fisher-KPP type, $g''(0) < 0$ and we conjecture that we also have $g''(n) < 0 \; \forall n \in (0,1]$. This implies that there exists a unique solution to \eqref{eq:2.1} that satisfies \eqref{eq:2.2} for any $c \geq 2 \sqrt{g'(0)} = 2 \sqrt{1-\bar{m}}$. Equivalently, there exists a unique solution to \eqref{eq:1.1a}-\eqref{eq:1.1c} that satisfies $\displaystyle \lim_{y\to-\infty} (n,p,m) = (1,0,0)$ and   $\displaystyle \lim_{y\to+\infty} (n,p,m) = (0,0,\bar{m})$ for any $c \geq 2 \sqrt{1-\bar{m}}$.

\subsection{Numerical simulations}
\paragraph{Numerical methods.}
We solve numerically the PDE model \eqref{eq:S1} on the 1-D spatial domain $\mathcal{X} \coloneqq [0,L]$ with $L > 0$, subject to the initial conditions \eqref{eq:S3.3}, using the method of lines. Setting $L=200$, we discretise the spatial domain $\mathcal{X}$ using a uniform grid comprising 2000 points and impose no flux boundary conditions. Similarly to \cite{46}, we use the following explicit central difference scheme to approximate the nonlinear diffusion terms:
\begin{equation}
\begin{aligned}
     \frac{\partial}{\partial x}\left((D(M) \frac{\partial N}{\partial x}\right) & \approx \frac{1}{2 (\delta x)^2} \biggl(\left(D(M)\Bigr \rvert_{r-1} +D(M)\Bigr \rvert_{r} \right)N_{r-1} \\
     & - \left(D(M)\Bigr \rvert_{r-1} +2D(M)\Bigr \rvert_{r} + D(M)\Bigr \rvert_{r+1}\right )N_r\\ & - \left(D(M)\Bigr \rvert_{r} +D(M)\Bigr \rvert_{r+1} \right) N_{r+1} \biggr),
\end{aligned}
\end{equation}
where $\mid_r$ denotes evaluation at the $r^{th}$ spatial grid point and $\delta_x = 0.1$ is the spatial grid size. This spatial discretisation results in a system of $2000$ time-dependent ODEs, which we solve using ODE15s, a variable step, variable order MATLAB built-in solver for stiff ODEs that is based on the numerical differentiation formulas (NDF1-NDF5). In line with the initial conditions \eqref{eq:S3.3}, for each $r\in \llbracket 1, 2000 \rrbracket$, we impose the following initial conditions:
\begin{equation}
    \begin{cases}
      N_r(0) = 1, \; M_r(0) = 0, \qquad \qquad \qquad \qquad \qquad \quad \;\;\;\, \qquad \qquad \qquad \text{if} \; 0 \leq x_r < \sigma-\omega, \\
      N_r(0) =  \exp{\left( 1 - \frac{1}{1 - \left(\frac{x_r-\sigma + \omega}{\omega}\right)^2}\right)},  \;  M_r(0) = \bar{M}\left( 1- N_r(0) \right), \qquad \; \text{if} \; \sigma-\omega \leq x_r < \sigma, \\
      N_r(0) = 0, \;  M_r(0) = \bar{M}, \qquad \qquad \qquad \qquad \qquad \qquad \qquad \qquad \quad \;\; \text{if} \; \sigma \leq x_r \leq 200,
    \end{cases}
    \label{eq:S3.1}
\end{equation}
where $\bar{M} \in [0,1]$. Unless otherwise stated, we run the simulations for $t \in [0,100]$. 

To numerically solve the ODE models \eqref{eq:1a}-\eqref{eq:1b} and \eqref{eq:1.1a}-\eqref{eq:1.1c}, subject to their respective initial conditions, we use the MATLAB built-in solvers ODE15s and ODE45, respectively.

\pagebreak 

\paragraph{Travelling wave profiles for TWS of the ODE model in the desingularised variables.} \hfill

\begin{figure}[!ht]
\begin{subfigure}[t]{0.5\textwidth}
\centering
\includegraphics[scale=0.18]{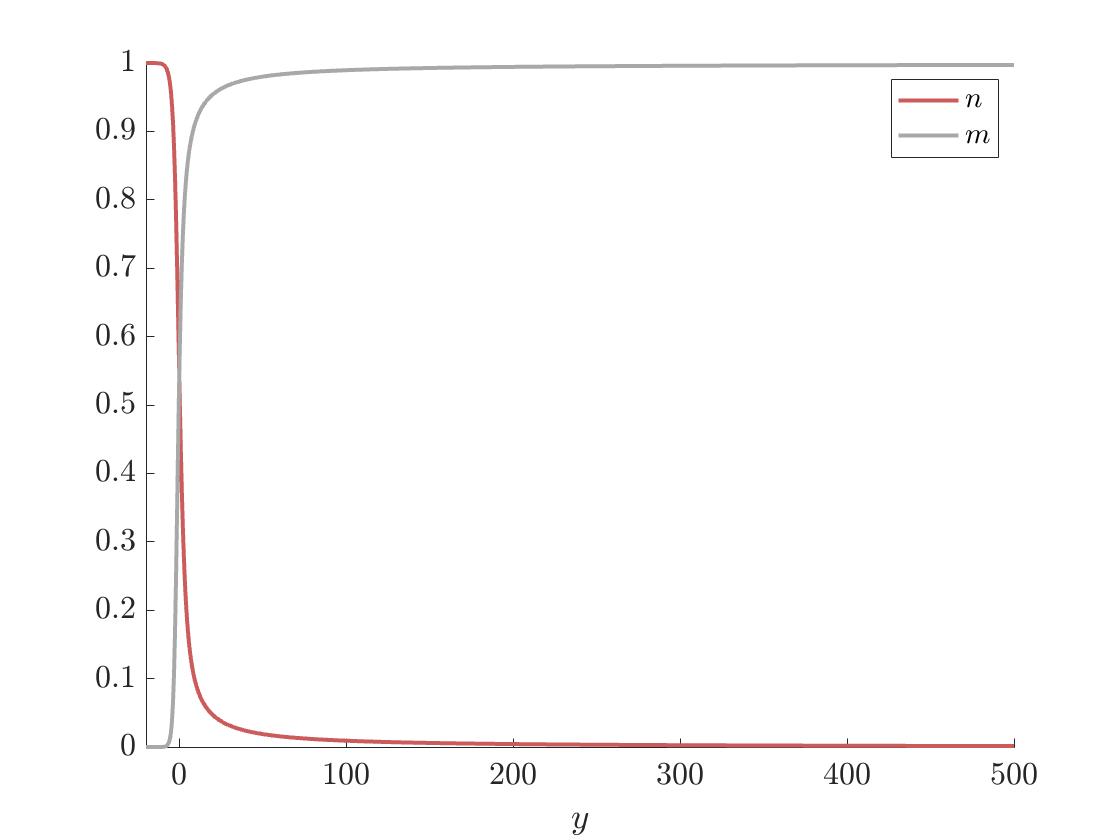}
\caption{}
\label{fig:3.1a}
\end{subfigure}
\hfill
\begin{subfigure}[t]{0.5\textwidth}
\centering
\includegraphics[scale=0.175]{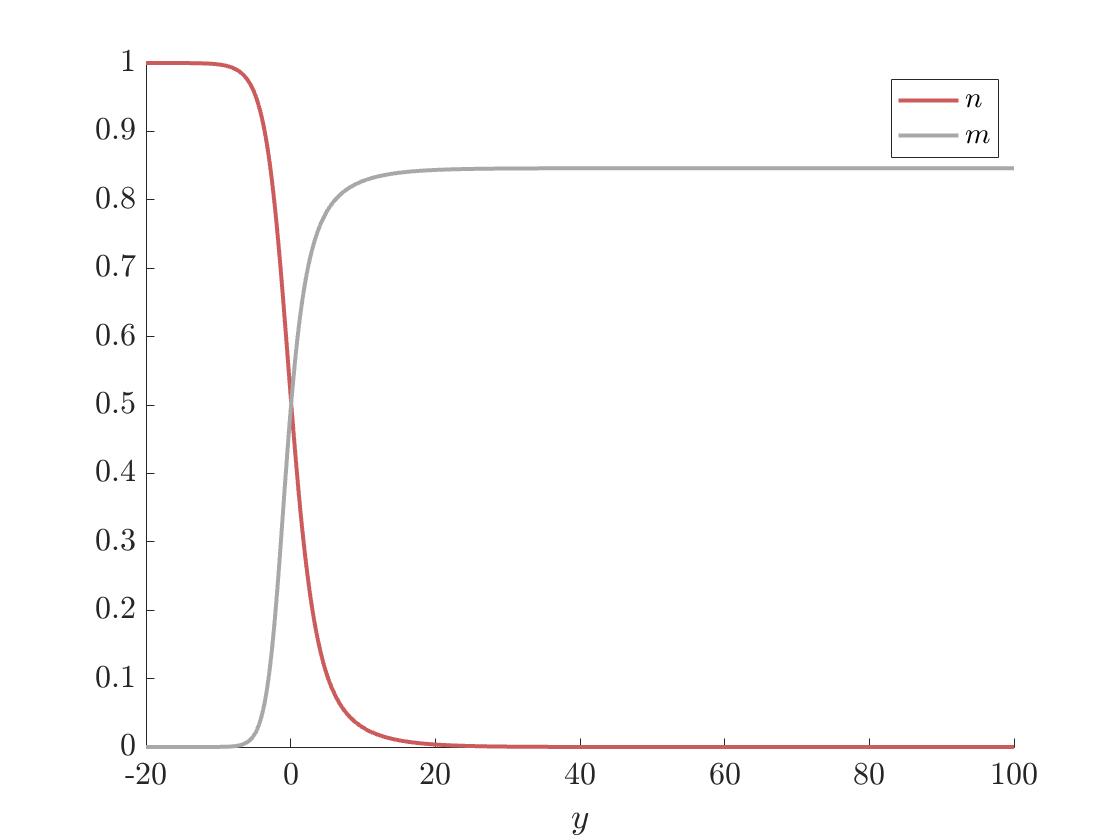} 
\caption{}
\label{fig:3.1b}
\end{subfigure}
\begin{subfigure}[t]{0.5\textwidth}
\centering
\includegraphics[scale=0.18]{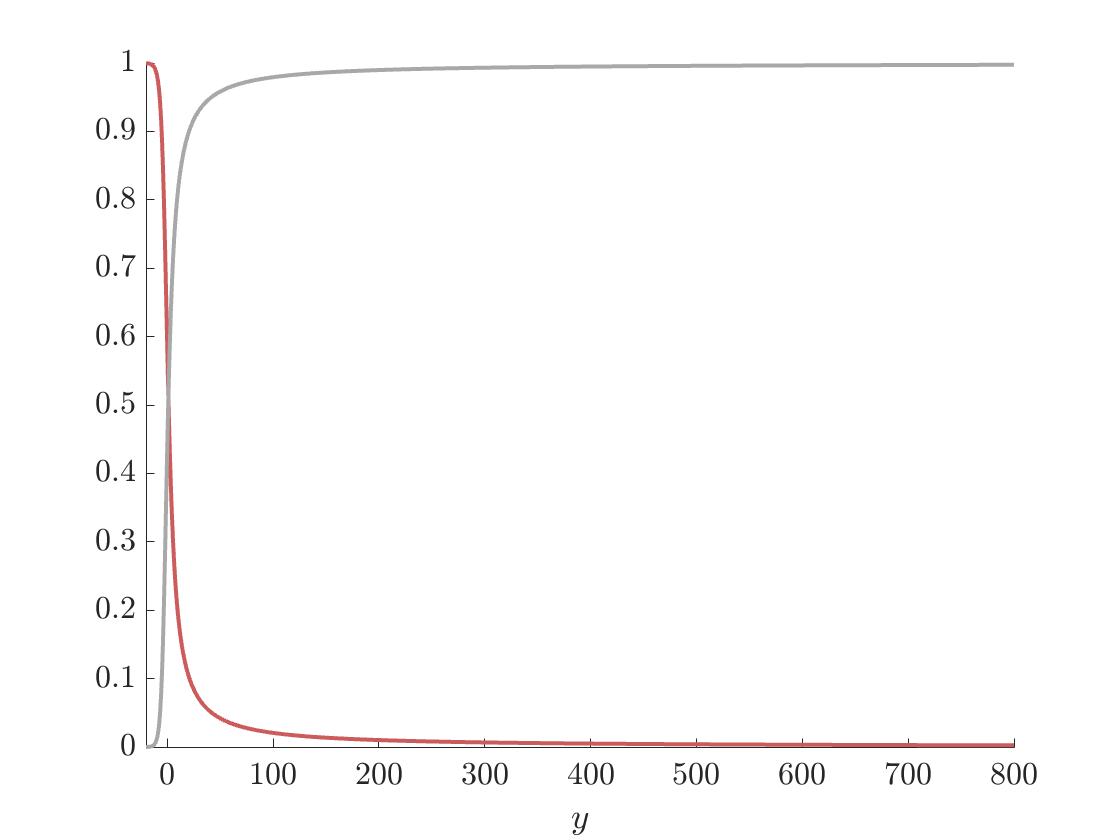} 
\label{fig:3.1c}
\caption{}
\end{subfigure}
\hfill
\begin{subfigure}[t]{0.5\textwidth}
\centering
\includegraphics[scale=0.18]{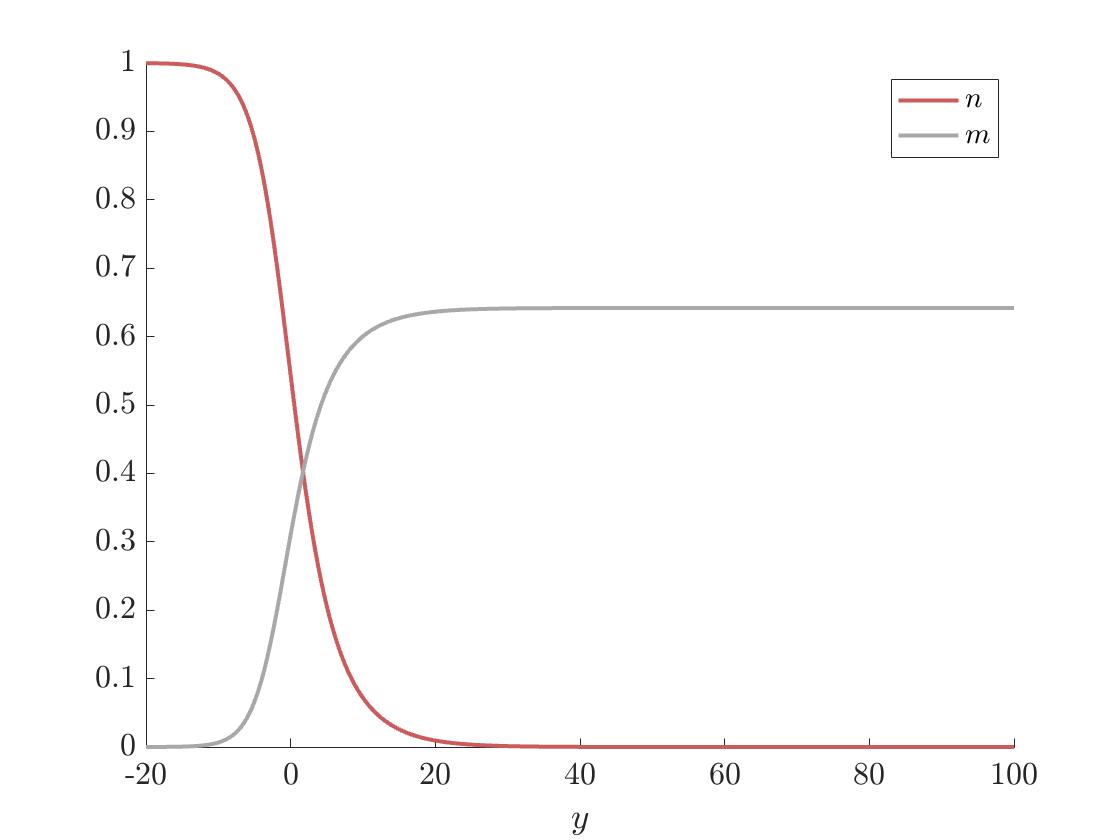} 
\label{fig:3.1d}
\caption{}
\end{subfigure}
\caption{In (a) and (b), we plot the $n$ and $m$ components of the solution of the desingularised system \eqref{eq:1.1a}-\eqref{eq:1.1c} subject to the asymptotic conditions \eqref{eq:S1.2} with $\kappa =1$, $c = 1$ and $\alpha = 3.72$ (a) or $\alpha = 3$ (b). In (c) and (d), we plot the $n$ and $m$ components of the solution of the desingularised system \eqref{eq:1.1a}-\eqref{eq:1.1c} subject to the asymptotic conditions \eqref{eq:S1.2} with $\kappa =1$, $c = 2$ and $\alpha = 1.161$ (c) or $\alpha = 1$ (d). The travelling wave profiles in plots (a) and (c) correspond to TWS that satisfy the asymptotic condition $\displaystyle \lim_{y\to+\infty} (n(y),p(y),m(y)) = (0,0,1)$ and those in plots (b) and (d) correspond to TWS that satisfy the asymptotic condition $\displaystyle \lim_{y\to+\infty} (n(y),p(y),m(y)) = (0,0,\bar{m})$, with $\bar{m} \in [0,1)$. We observe that, in the former case, $n(y)$ and $m(y)$ converge slowly to $0$ and $1$, respectively, as $y \to +\infty$, whereas, in the latter case, $n(y)$ and $m(y)$ converge fast to $0$ and $\bar{m}$, respectively, as $y \to +\infty$. }
\label{fig:3.1}
\end{figure}

\pagebreak

\paragraph{Travelling waves of the PDE model.} \hfill

We recall that we solve \eqref{eq:S1} on the 1-D spatial domain $\mathcal{X} \coloneqq [0,L]$, where $L > 0$. Similarly to \cite{46}, we assume that the tumour has already spread to a position $x=\sigma < L$ in the tissue and we impose initial conditions that satisfy, for $\bar{M} \in [0,1]$:
\begin{equation}
    \begin{cases}
      N(x,0) = 1, \; M(x,0) = 0, \qquad \qquad \qquad \qquad \qquad \quad \;\;\;\, \qquad \qquad \qquad \text{if} \; 0 \leq x < \sigma-\omega, \\
      N(x,0) =  \exp{\left( 1 - \frac{1}{1 - \left(\frac{x-\sigma + \omega}{\omega}\right)^2}\right)},  \;  M(x,0) = \bar{M}\left( 1- N(x,0) \right), \quad \text{if} \; \sigma-\omega \leq x < \sigma, \\
      N(x,0) = 0, \;  M(x,0) = \bar{M}, \qquad \qquad \qquad \qquad \qquad \qquad \qquad \qquad \quad \; \text{if} \; \sigma \leq x \leq L.
    \end{cases}
    \label{eq:S3.3}
\end{equation}
Here, $0 < \omega < \sigma$ represents how sharp the initial boundary between the tumour and healthy tissue is. 

\begin{figure}[!h]
\begin{subfigure}[t]{0.5\textwidth}
\centering
\includegraphics[scale=0.18]{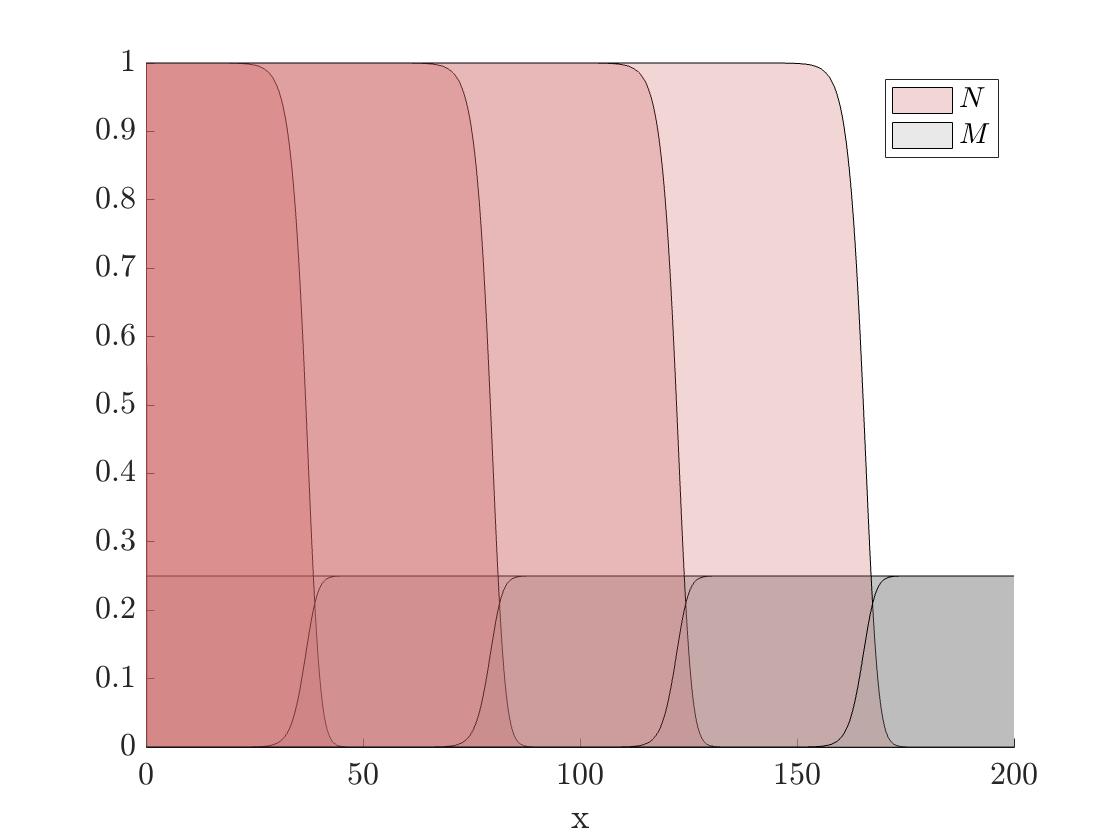} 
\caption{}
\label{fig:3.2a}
\end{subfigure}
\hfill
\begin{subfigure}[t]{0.5\textwidth}
\centering
\includegraphics[scale=0.18]{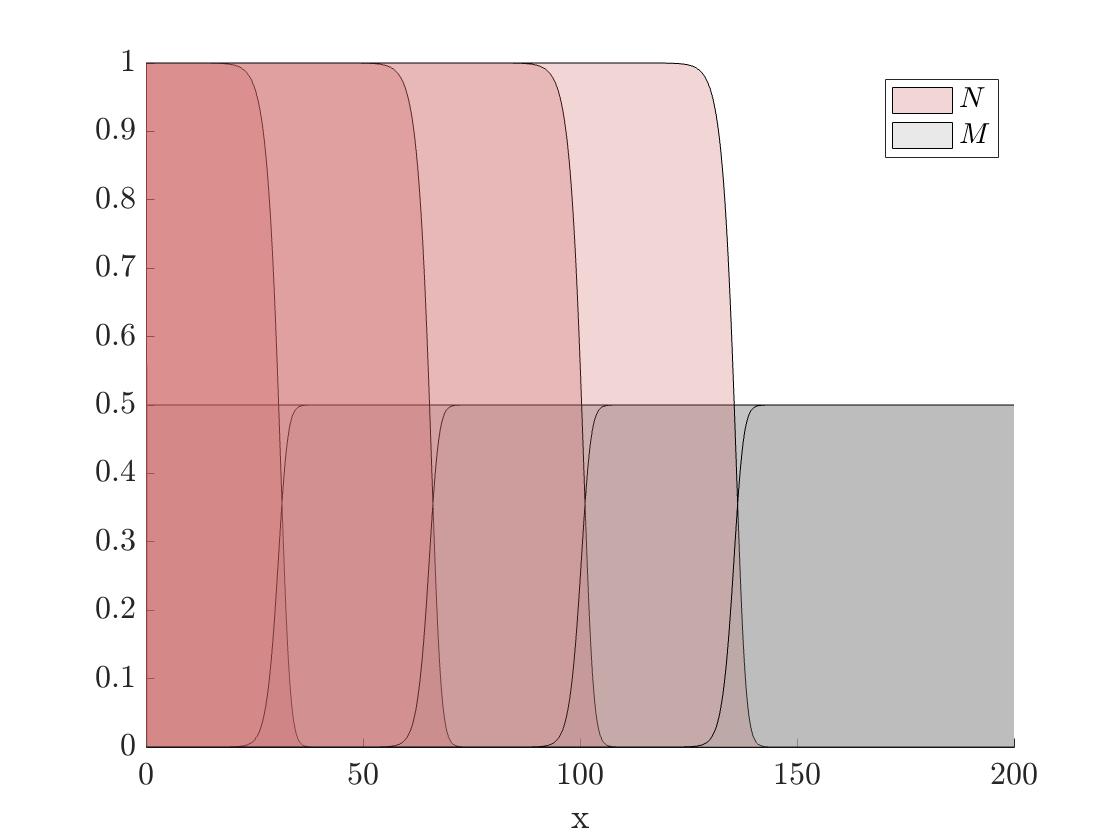} 
\caption{}
\label{fig:3.2b}
\end{subfigure}

\begin{subfigure}[t]{0.5\textwidth}
\centering
\includegraphics[scale=0.18]{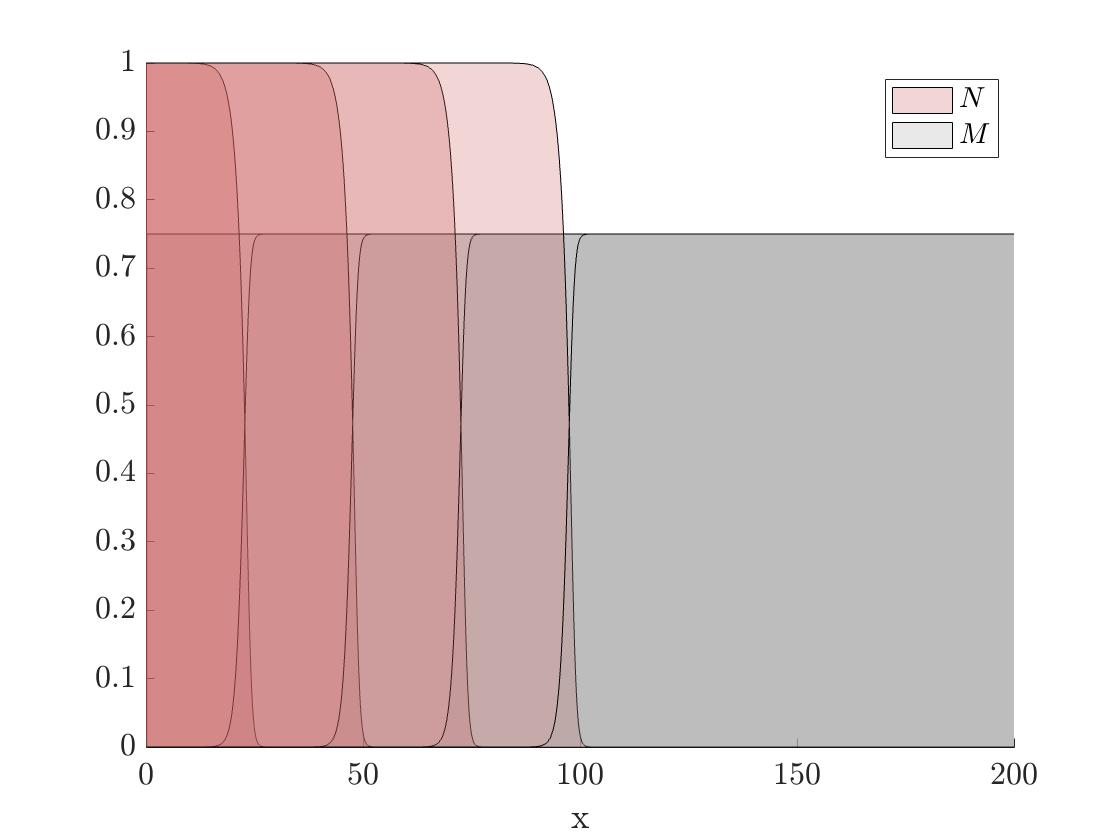} 
\caption{}
\label{fig:3.2c}
\end{subfigure}
\hfill
\begin{subfigure}[t]{0.5\textwidth}
\centering
\includegraphics[scale=0.18]{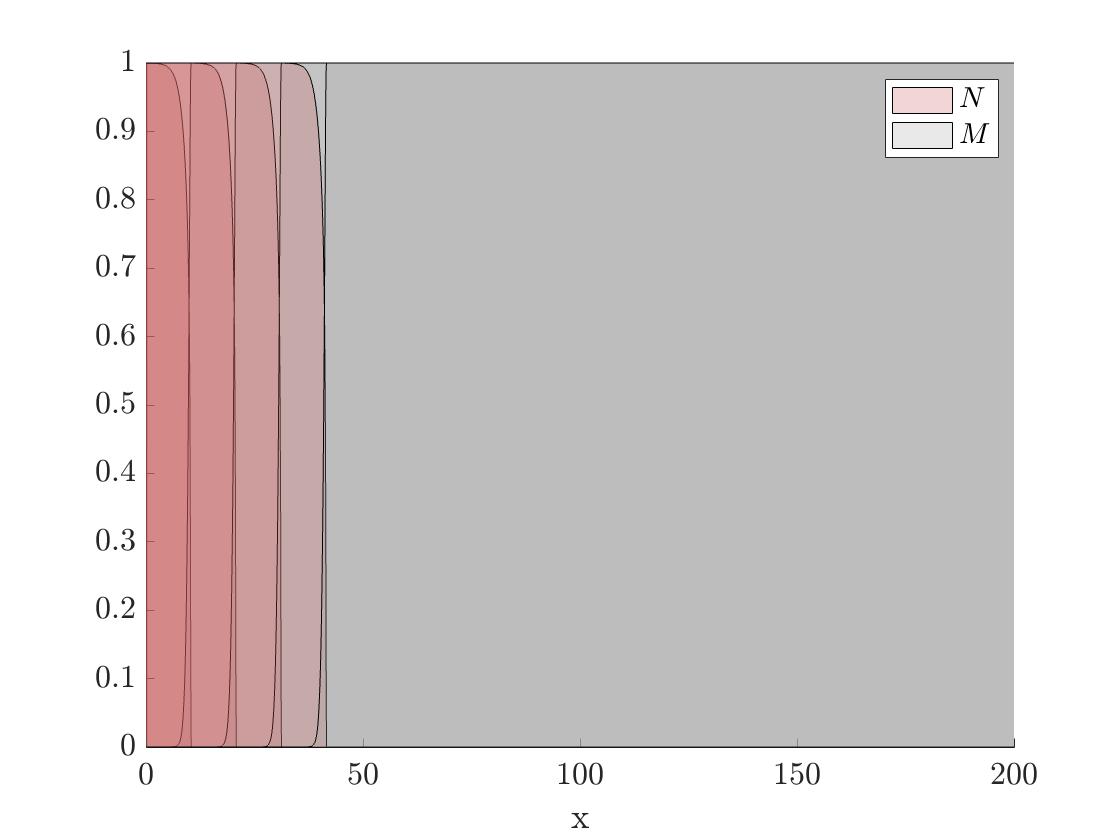} \caption{}
\label{fig:3.2d}
\end{subfigure}
\caption{We solve system \eqref{eq:S1} on the 1-D spatial domain, $x \in \mathcal{X} = [0,200]$, and impose the initial conditions \eqref{eq:S3.3} with $\sigma = 0.2$, $\omega = 0.1$ and $\bar{M}=0.25$ (a), $\bar{M}=0.5$ (b), $\bar{M}=0.75$ (c) and $\bar{M}=1$ (d). We plot the respective solutions for $t\in \{25,50,75,100\}$ and observe the emergence of a constant profile, constant speed TWS in all cases.}
\end{figure}

\pagebreak

\end{document}